\documentclass[a4paper,12pt,reqno]{amsart}
\pdfoutput=1

\usepackage{mathrsfs,mathdots,amssymb}
\usepackage[T1]{fontenc}
\usepackage[utf8]{inputenc}
\usepackage[centering]{geometry}

\usepackage{microtype}

\usepackage{hyperref}
\theoremstyle{plain}

\newtheorem{theorem}{Theorem}[section]
\newtheorem*{theorem*}{Theorem}

\newtheorem{lemma}[theorem]{Lemma}
\newtheorem{proposition}[theorem]{Proposition}
\newtheorem{example}[theorem]{Example}
\numberwithin{equation}{section}

\theoremstyle{remark}
\newtheorem{claim}[theorem]{Claim}
\newtheorem{definition}[theorem]{Definition}
\newtheorem{remark}[theorem]{Remark}

 \def\C{\mathbb C}
   
\def \N{\mathbb N} 
\def\i{{\rm i}}

\def\wtilde{\widetilde} \def\what{\widehat}

\newcommand{\coloneqq}{\mathrel{\mathop:}=}

\newcommand{\R}{\mathbb{R}}
\newcommand{\In}{\operatorname{In}}

\newcommand{\rank}{\operatorname{rank}}

\title[Hurwitz stability criteria for matrix polynomials]%
{On generalization of classical Hurwitz stability criteria for matrix polynomials}
\date{29 September 2019}

\subjclass[2010]{34D20, 15A24, 44A60, 47A56, 93D20, 12D10}

\author[X. Zhan]{Xuzhou Zhan}
\address[X. Zhan]{School of Mathematical Sciences, South China Normal University,
    Guangzhou 510631, China}
\email{zhan@math.uni-leipzig.de}

\author[A. Dyachenko]{Alexander Dyachenko}
\address[A. Dyachenko]{Department of Mathematics, University College London,
    Gower St., London WC1E~6BT, UK}
\email{diachenko@sfedu.ru}

\begin{document}

\begin{abstract}
    In this paper, we associate a class of Hurwitz matrix polynomials with Stieltjes positive
    definite matrix sequences. This connection leads to an extension of two classical criteria
    of Hurwitz stability for real polynomials to matrix polynomials: tests for Hurwitz stability
    via positive definiteness of block-Hankel matrices built from matricial Markov parameters
    and via matricial Stieltjes continued fractions. We obtain further conditions for Hurwitz
    stability in terms of block-Hankel minors and quasiminors, which may be viewed as a weak
    version of the total positivity criterion.\\[.3em]

    \noindent\textbf{Keywords:} Hurwitz stability, matrix polynomials, total positivity, Markov
    parameters, Hankel matrices, Stieltjes positive definite sequences, quasiminors

\end{abstract}

\maketitle

\section{Introduction}\label{SectionIntro}
Consider a high-order differential system
\begin{equation*}
    A_{0} y^{(n)}(t)+A_{1} y^{(n-1)}(t)+\cdots+A_{n} y(t)=u(t)
    ,
\end{equation*}
where $A_0,\dots,A_n$ are complex matrices, $y(t)$ is the output vector and~$u(t)$ denotes the
control input vector. The asymptotic stability of such a system is determined by the Hurwitz
stability of its characteristic matrix polynomial
\begin{equation*}
F(z)=A_{0} z^{n}+A_{1} z^{n-1}+\cdots+A_{n},
\end{equation*}
or to say, by that all roots of~$\det F(z)$ lie in the open left half-plane~$\Re z<0$. Many
algebraic techniques are developed for testing the Hurwitz stability of matrix polynomials,
which allow to avoid computing the determinant and zeros: LMI approach
\cite{HAP,HAPS,LPJ,LPJL}, the Anderson-Jury Bezoutian \cite{LRT, LT82}, matrix Cauchy indices
\cite{BA}, lossless positive real property~\cite{AB}, block Hurwitz matrix~\cite{KMS}, extended
Routh-Hurwitz array \cite{Gal}, argument principle \cite{HH} and so on.  However, the authors are unaware
of a suitable extension to the following classical landmarks which one can inevitably encounter
in the abundant study of the scalar case.

Gantmacher's monograph \cite{Gan} gives a comprehensive overview of issues related to Hurwitz
polynomials, among which is a characterization of the Hurwitz stability through the
corresponding Stieltjes continued fraction \cite[Theorem 16, Chapter XV, p232]{Gan}:

\begin{theorem*}[\textbf{Stability criterion via continued fractions}]
    A real polynomial $f(z)$ of degree~$n=2m$ or~$n=2m+1$ is a Hurwitz polynomial if and only if
    its even part $f_{e}(z)$ and its odd part $f_{o}(z)$, where~$f_e(z^2)+z f_o(z^2)=f(z)$,
    admit the following Stieltjes continued fraction:
    \begin{equation}\label{ScalarContinuedFraction}
        \frac{f_{o}(z)}{f_{e}(z)}=c_{0}+\cfrac{1}{z{c}_1+\cfrac{1}{c_{2m-2}+\cfrac{\ddots}{zc_{2m-1}+c_{2m}^{-1}}}},
    \end{equation}
    where ${c}_k>0$ for $k=1,\ldots, n$ and, when~$n=2m+1$, additionally~$c_{0}>0$.
\end{theorem*}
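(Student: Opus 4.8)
The plan is to recast the statement as the equivalence of three properties of the rational function $\phi(z)=f_o(z)/f_e(z)$: (i)~$f$ is a Hurwitz polynomial; (ii)~$\phi$ is a (rational) \emph{Stieltjes function}, i.e.\ it admits a partial-fraction expansion $\phi(z)=\gamma+\sum_{k}\mu_k/(z+\lambda_k)$ with $\gamma\ge 0$, $\mu_k>0$ and $\lambda_k>0$; and (iii)~$\phi$ admits the Stieltjes continued fraction~\eqref{ScalarContinuedFraction} with positive entries. The implication (ii)$\lrarrow$(iii) is the classical theory of S-fractions and is where the positivity of the $c_k$ enters, so the substantive work lies in establishing (i)$\lrarrow$(ii).

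First I would obtain (i)$\lrarrow$(ii) through the \emph{Hermite--Biehler theorem}. Setting $z=\i y$ in $f$ yields $f(\i y)=f_e(-y^2)+\i\,y f_o(-y^2)$, so that the real and imaginary parts of $f$ along the imaginary axis are $P(y)=f_e(-y^2)$ and $Q(y)=y f_o(-y^2)$. By Hermite--Biehler, $f$ is Hurwitz exactly when $P$ and $Q$ have only real, simple, interlacing zeros together with the correct sign of the Wronskian $P'Q-PQ'$. Since $y\mapsto -y^2$ maps $\R$ onto $(-\infty,0]$, the reality of the zeros of $P$ and $Q$ is equivalent to $f_e$ and $f_o$ having only \emph{negative real} zeros, the interlacing of $P,Q$ passes to interlacing of the zeros of $f_e$ and $f_o$ on $(-\infty,0)$, and the Wronskian sign condition fixes the sign of the residues of $\phi$.

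Next I would convert these zero--interlacing and sign data into the representation (ii). Simple, interlacing, negative poles and zeros of $\phi=f_o/f_e$, with the residue signs supplied by the Wronskian condition, force the expansion $\phi(z)=\gamma+\sum_k\mu_k/(z+\lambda_k)$ with all $\lambda_k>0$ and all $\mu_k>0$; this is precisely a rational Stieltjes function, being the Stieltjes transform of the positive discrete measure $\sum_k\mu_k\,\delta_{\lambda_k}$ augmented by the constant $\gamma$. The degree count dictated by $n$ then fixes the length of the expansion and the presence of the leading term: for $n=2m+1$ one has $\deg f_e=\deg f_o=m$, whence $\phi(\infty)=\gamma=c_0>0$, while for $n=2m$ one has $\deg f_o=\deg f_e-1$, whence $\gamma=0$ and the constant term $c_0$ is absent. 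Invoking Stieltjes' theorem—every rational Stieltjes function expands into a terminating S-fraction with strictly positive partial quotients, and conversely—then delivers exactly~\eqref{ScalarContinuedFraction} with $c_k>0$, completing (ii)$\lrarrow$(iii).

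I expect the main obstacle to be the faithful bookkeeping in the passage (i)$\lrarrow$(ii): one must not only transfer the interlacing through the map $y\mapsto -y^2$ but also pin down the \emph{signs} of all residues $\mu_k$ (equivalently, the correct orientation of the Wronskian), since mere interlacing guarantees alternating, not uniformly positive, residues. A clean alternative that sidesteps Hermite--Biehler is a direct induction on $\deg f$ via the Routh scheme: a single division step extracts one partial quotient from $f_o/f_e$ and replaces $(f_e,f_o)$ by a pair of lower degree, and one shows that positivity of the extracted quotient together with the Hurwitz property of the reduced pair is equivalent to the Hurwitz property of $f$; iterating produces~\eqref{ScalarContinuedFraction}. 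In either route the delicate point is ensuring that positivity is preserved at \emph{every} step rather than merely at the first.
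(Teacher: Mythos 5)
Your outline is essentially correct, and it is worth saying up front that the paper itself contains no proof of this statement: it is quoted verbatim from Gantmacher \cite[Ch.~XV, Theorem~16]{Gan}, and Gantmacher's own argument is precisely your chain (i)$\lrarrow$(ii)$\lrarrow$(iii) — $f$ Hurwitz iff $(f_e,f_o)$ is a positive pair (Hermite--Biehler), iff $f_o/f_e$ is a rational Stieltjes function with simple negative poles and positive residues, iff it admits a terminating S-fraction with positive quotients (Stieltjes' theorem). So your proposal reconstructs the classical proof rather than deviating from it. Two small points of bookkeeping: first, your cautionary remark that ``mere interlacing guarantees alternating, not uniformly positive, residues'' is backwards — interlacing of the simple real zeros and poles of a real rational function forces all residues to have one \emph{common} sign (the sign of $q(x_k)/p'(x_k)$ is constant along interlacing configurations), and the Wronskian orientation only decides \emph{which} sign, so the step you flag as delicate is lighter than you fear; second, you should note explicitly that Hurwitz stability gives $f_e(0)\neq 0$ (hence all $\lambda_k>0$ strictly) and that interlacing makes $f_e$ and $f_o$ coprime, so $\phi$ has exactly $m$ poles and the S-fraction terminates with exactly $n$ quotients, matching the count in~\eqref{ScalarContinuedFraction} and, conversely, forcing coprimality when the fraction with $n$ positive quotients is given. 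Finally, a contrast that illuminates the paper: for the matricial generalization (Theorems~\ref{ThmHurwitzMatrixHurwitzSRMP} and~\ref{ThmHurwitzMatrixHurwitzParameterEven}) the authors cannot use your route at all, since zero interlacing and Hermite--Biehler have no workable matrix analogue; they instead pass through Markov parameters, the inertia of Anderson--Jury Bezoutians (Lemma~\ref{LemHermite-Fujiwara}), Stieltjes positive definiteness of block Hankel matrices, and Choque Rivero's result \cite{Riv} to reach the matricial S-fraction. Your interlacing argument buys the shortest scalar proof but is exactly the ingredient that fails to generalize, whereas your alternative Routh-scheme induction is closer in spirit to the Hankel/continued-fraction machinery that does survive in the matrix setting.
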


Theorem~17 of~\cite[Chapter XV]{Gan} connects Hurwitz polynomials with positive definite Hankel
matrices built from Markov parameters:

\begin{theorem*}[\textbf{Stability criterion via Markov parameters}]
    Given a real polynomial $f(z)$ of degree $n=2m$ or $n=2m+1$, define its Markov
    parameters~$(s_k)_{k=-1}^{\infty}$ from the rational function
    $$
    \frac{f_{o}(z)}{f_{e}(z)}=s_{-1}+\sum_{k=0}^{\infty} \frac{(-1)^k s_{k}}{z^{k+1}},
    $$
    where $f_e(z)$ and $f_o(z)$ are as above. Then $f(z)$ is a Hurwitz polynomial if and only if
    both Hankel matrices $[s_{j+k}]_{j,k=0}^{m-1}$ and $[s_{j+k+1}]_{j,k=0}^{m-1}$ are positive
    definite and, for~$n=2m+1$, additionally $s_{-1}>0$.
\end{theorem*}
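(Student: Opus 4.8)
The plan is to deduce this criterion from the preceding stability criterion via continued fractions, by converting the positivity of the coefficients $c_0,c_1,\dots,c_{2m}$ in~\eqref{ScalarContinuedFraction} into positive definiteness of the two Hankel matrices. First I would observe that $c_0$ is exactly the constant term $s_{-1}$ of the Laurent expansion of $f_o/f_e$ at infinity, so the requirement $c_0>0$ is literally $s_{-1}>0$. A degree count then explains the parity split: when $n=2m$ one has $\deg f_o=m-1<m=\deg f_e$, so $f_o/f_e$ is proper and $s_{-1}=0$, whereas when $n=2m+1$ the numerator and denominator have equal degree $m$, so $s_{-1}\neq0$ and the sign condition $s_{-1}>0$ is a genuine constraint. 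This reduces the theorem to the purely algebraic equivalence: the coefficients $c_1,\dots,c_{2m}$ of the Stieltjes continued fraction~\eqref{ScalarContinuedFraction} are all positive if and only if both Hankel matrices $[s_{j+k}]_{j,k=0}^{m-1}$ and $[s_{j+k+1}]_{j,k=0}^{m-1}$ are positive definite.

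The core of the argument is the classical correspondence between a Stieltjes continued fraction and the Hankel determinants of its Laurent coefficients. Writing $D_p\coloneqq\det[s_{j+k}]_{j,k=0}^{p-1}$ and $D_p'\coloneqq\det[s_{j+k+1}]_{j,k=0}^{p-1}$ with $D_0=D_0'=1$, I would expand the terminating fraction~\eqref{ScalarContinuedFraction} in powers of $1/z$ and match its successive convergents with $s_{-1}+\sum_{k\ge0}(-1)^k s_k z^{-k-1}$ to obtain the quotient formulas
\[
c_{2p-1}=\frac{(D_{p-1}')^2}{D_{p-1}\,D_p},\qquad
c_{2p}=\frac{D_p^2}{D_{p-1}'\,D_p'}\qquad(p=1,\dots,m).
\]
Reading off the partial products $\prod_{i=1}^{2p-1}c_i=D_{p-1}'/D_p$ and $\prod_{i=1}^{2p}c_i=D_p/D_p'$, which telescope, shows that positivity of $c_1,\dots,c_{2m}$ is equivalent to positivity of every $D_1,\dots,D_m$ and $D_1',\dots,D_m'$. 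By Sylvester's criterion the chain $D_1,\dots,D_m>0$ is precisely the positive definiteness of $[s_{j+k}]_{j,k=0}^{m-1}$, and $D_1',\dots,D_m'>0$ is precisely that of $[s_{j+k+1}]_{j,k=0}^{m-1}$, which establishes the equivalence and, combined with the continued-fraction criterion, proves the theorem in both directions (with $s_{-1}=c_0>0$ taking care of the odd case).

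The main obstacle I anticipate lies entirely in the bookkeeping around the \emph{finite} depth of~\eqref{ScalarContinuedFraction}. On the one hand, the quotient formulas above must be justified by the convergent-matching computation, and this is a delicate classical step that presupposes that the determinants $D_p,D_p'$ do not vanish prematurely. On the other hand, to invoke the continued-fraction criterion in the backward direction I must confirm that, since $f_o/f_e$ is rational with denominator degree at most $m$, its Stieltjes fraction terminates at exactly level $2m$, so that the size-$m$ Hankel matrices encode precisely the coefficients $c_1,\dots,c_{2m}$ and nothing further; this rests on the degree analysis of $f_e$ and $f_o$. Ruling out the degenerate configurations in which some leading principal minor vanishes — these correspond to the non-Hurwitz boundary situations and must be excluded consistently on both sides of the equivalence — is the point that will require the most care.
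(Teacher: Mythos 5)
Your argument is sound, and it is worth noting at the outset that the paper itself does not prove this statement: it is quoted as classical background from Gantmacher \cite[Theorem 17, Chapter XV]{Gan}, and your derivation --- deducing the Markov-parameter criterion from the continued-fraction criterion via the Stieltjes determinant formulas $c_{2p-1}=(D_{p-1}')^2/(D_{p-1}D_p)$, $c_{2p}=D_p^2/(D_{p-1}'D_p')$, the telescoping partial products, and Sylvester's criterion --- is essentially Gantmacher's own route. However, the proof that the paper's \emph{machinery} effectively supplies for this statement (specialize Theorems~\ref{ThmHurwitzMatrixHurwitzSRMP} and~\ref{ThmHurwitzMatrixHurwitzOdd} to $p=1$) is genuinely different: it runs through the Lerer--Tismenetsky inertia theorem (Lemma~\ref{LemHermite-Fujiwara}), the congruence of the Anderson--Jury Bezoutian to a direct sum of the two Hankel matrices (Lemma~\ref{LemCongruenceBezoutHankel} inside Theorem~\ref{ThmInertiaHankelSRMPOriginal}), and a GRCD argument ruling out symmetric spectrum, with no continued fractions at all. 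The trade-off is instructive: your route is more elementary but consumes the companion continued-fraction criterion as input and leans on determinant identities that do not survive noncommutativity, which is precisely why the paper cannot use it for matrix polynomials; the Bezoutian--inertia route is heavier but self-contained and matrix-ready, and indeed in the paper the logical dependency is reversed relative to yours --- the matricial continued-fraction criterion (Theorem~\ref{ThmHurwitzMatrixHurwitzParameterEven}) is \emph{deduced from} the Hankel positivity criterion together with \cite{Riv}, rather than the other way around. Two small points you should still pin down: in the odd case your degree count $\deg f_e=m$ implicitly assumes the subleading coefficient is nonzero, which is exactly the (implicit) hypothesis that the stated Laurent expansion of $f_o/f_e$ exists; and in the backward direction, termination of the fraction at depth exactly $2m$ follows because $D_m\neq 0$ forces $f_e$ and $f_o$ to be coprime (a common factor would make the infinite Hankel matrix have rank $<m$), which disposes of the degenerate configurations you flag more cleanly than a case analysis would.
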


The positive definiteness of the pair of matrices~$[s_{j+k}]_{j,k=0}^{m-1}$ and
$[s_{j+k+1}]_{j,k=0}^{m-1}$ turns to be equivalent to the total positivity of rank~$m-1$ of the
infinite Hankel matrix $[s_{j+k}]_{j,k=0}^{\infty}$: i.e., to that all minors of
$[s_{j+k}]_{j,k=0}^{\infty}$ of order $\leq m-1$ are positive and all minors of order~$\ge m$
are zero. Theorem 20 of~\cite[Chapter XV]{Gan} gives the corresponding alternative criterion of
Hurwitz stability: a real polynomial $f(z)$ is a Hurwitz polynomial iff
$[s_{j+k}]_{j,k=0}^{\infty}$ is totally positive and, for~$n=2m+1$, additionally~$s_{-1}>0$.
  
Some obstacles occur in testing Hurwitz stability of matrix polynomials when one tries to adopt
these classical tools. Such features as noncommutativity of the matrix product induce
limitations of the determinant approach to algebraic constructs involving coefficients of matrix
polynomials. The applicability of scalar methods to the matrix case is also strongly influenced
by lack of suitable correlations between the matrix coefficients of a matrix polynomial (even of
lower degree) and its zeros.

We follow some alternative lines to deal with the matrix extension. Based on the spectral theory
of matrix polynomials, the papers~\cite{LRT, LT82} solve some zero-separation problems for
matrix polynomials in terms of Anderson-Jury Bezoutian. Converting the solution from
\cite{LRT, LT82} into the form of matricial Markov parameters is the key step for our matricial
refinement of the stability criterion via Markov parameters.

The so-called matrix Hurwitz type polynomials, which are defined via matricial analogue of
Stieltjes continued fraction as in~\eqref{ScalarContinuedFraction}, are studied by Choque
Rivero~\cite{Riv} in connection to matricial Stieltjes moment problem. To extend to the matrix
case the stability criterion via continued fractions or, in other words, to uncover the relation
between matrix Hurwitz type polynomials and Hurwitz matrix polynomials, we link the Markov
parameters with the properties of Stieltjes moment matrix sequences.

We conclude the introduction with the outline of the paper.
Section~\ref{SectionHurwitzStieltjes} brings forth two natural questions concerning the
extension of the stability criterion via Markov parameters and via Stieltjes continued
fractions. It turns out that to give such extensions for all complex/real matrix polynomials is
impossible. In Section~\ref{SectionRouth-HurwitzMP}, we derive an inertia representation for
matrix polynomials in terms of the matricial Markov parameters. Our main results are provided in
Section~\ref{Main}. We deal with a relationship between Hurwitz matrix polynomials and a special
important type of matricial Stieltjes moment sequences. It leads us to matricial extensions of
the stability criteria via Markov parameters and via continued fractions to a special class of
matrix polynomials. Further conditions for Hurwitz stability are obtained in terms of Hankel
minors and Hankel quasiminors built from matricial Markov parameters. Unfortunately, the
block-Hankel total positivity is not generally guaranteed for Hurwitz matrix polynomials.

\section{Matricial Markov parameters, Stieltjes continued fractions and Hurwitz stability}\label{SectionHurwitzStieltjes}
We begin with some basic notation. Denote by~$\C$, $\R$, $\N_0$ and~$\N$, respectively, the sets
of all complex, real, nonnegative integer, and positive integer numbers. Unless explicitly
noted, we assume in this paper that $p,q\in \N$. Let~$\mathbb C^{p\times q}$ stand for the set
of all complex~$p\times q$ matrices. Let also~$0_p$ and~$I_p$ be, respectively, the zero and the
identity~$p\times p$ matrices. Given a matrix~$A$ we denote its transpose by~$A^{\rm T}$, and
its conjugate transpose by~$A^*$. If~$A\in \mathbb C^{p\times p}$, then we write~$A\succ 0$ if
it is positive definite, and $A\succeq 0$ if~$A$ is nonnegative definite.

We denote by $\mathbb C[z]^{p\times p}$ the set of $p\times p$ matrix polynomials, that is the
ring of polynomials in~$z$ with coefficients from~$\mathbb C^{p\times p}$; in
particular~$\mathbb C[z]=\mathbb C[z]^{1\times 1}$. So, each~$F(z)\in\mathbb C[z]^{p\times p}$
may be written as
\begin{equation}\label{ReprentationMatrixPolynomials}
    F(z)=\sum_{k=0}^{n} A_k z^{n-k},\quad \mbox{with }A_0,\ldots, A_n\in \mathbb C^{p\times p},
    \quad A_0\ne 0_{p}
\end{equation}
for certain~$n\in\N_0$, which is called the degree of~$F(z)$. In particular, $F(z)$ is
\emph{monic} if~$A_0=I_p$. Furthermore,~$F(z)$ may be represented as a $p\times p$ matrix whose
entries are scalar polynomials in~$z$, and~$\deg F$ then equals the maximal degree of the
entries. In most cases, the degree will be assumed to be at least two, since the results for
linear polynomials are trivial.

Given a matrix polynomial~$F(z)\in\mathbb C[z]^{p\times p}$ written as
in~\eqref{ReprentationMatrixPolynomials}, define $F^{\vee}(z)\in\mathbb C[z]^{p\times p}$ by
\begin{equation*}
F^{\vee}(z)\coloneqq\sum_{k=0}^n A^*_k z^{n-k}.
\end{equation*}

A matrix polynomial $F(z)\in\mathbb C[z]^{p\times p}$ is said to be \emph{regular} if
$\det F(z)$ is not identically zero. Clear that all monic matrix polynomials are regular. Given
a regular matrix polynomial $F(z)$, we say that $\lambda\in \mathbb C$ is a \emph{zero} of
$F(z)$ if $\det F(\lambda)=0$. Its \emph{multiplicity} is the multiplicity of $\lambda$ as a
zero of~$\det F(z)$. The spectrum~$\sigma(F)$ of~$F(z)$ is the set of all zeros of~$F(z)$.

\begin{definition}
    A matrix polynomial~$F(z)\in \mathbb C[z]^{p\times p}$ may be split into the
    \emph{even part}~$F_{e}(z)$ and the \emph{odd part}~$F_{o}(z)$ so that
    \[ 
        F(z)=F_{e}(z^2)+zF_{o}(z^2).
    \]
    For~$F(z)$ of degree~$n$ written as in~\eqref{ReprentationMatrixPolynomials},
    they are defined by
    \begin{equation*}
  F_{e}(z)\coloneqq \sum_{k=0}^m A_{2k}z^{m-k}
  \quad\text{and}\quad
  F_{o}(z)\coloneqq \sum_{k=1}^m A_{2k-1}z^{m-k}
\end{equation*}
when~$n=2m$, and by
\begin{equation*}
    F_{e}(z)\coloneqq \sum_{k=0}^m A_{2k+1}z^{m-k}
    \quad\text{and}\quad
    F_{o}(z)\coloneqq \sum_{k=0}^m A_{2k}z^{m-k}
\end{equation*}
when~$n=2m+1$.
\end{definition}

\begin{definition}\label{DefMarkovParameters}
    Let $F(z)\in \mathbb C[z]^{p\times p}$ be a monic matrix polynomial with the even part~$F_{e}(z)$
    and the odd part~$F_{o}(z)$.
\begin{enumerate}
\item[{\rm (i)}] In the even case~$n=2m$, suppose that
    \begin{equation}\label{Ge}
        G_1(z)\coloneqq F_{o}(z)\cdot (F_{e}(z))^{-1}\quad (\mbox{resp.\ }\ G_1(z)\coloneqq (F_{e}(z))^{-1}\cdot F_{o}(z)\,)
    \end{equation}
    admits the Laurent expansion
    \begin{equation}
        G_1(z)=\sum_{k=0}^{\infty} (-1)^{k}z^{-(k+1)}{\bf s}_k \label{MakrovPE} 
    \end{equation}
    for large enough~$z\in \mathbb C$. If so, we call the matrix
    sequence~$({\bf s}_k)_{k=0}^{\infty}$ the \emph{sequence of right} (resp.\ \emph{left})
    \emph{Markov parameters} of $F(z)$.

\item[{\rm (ii)}] In the odd case $n=2m+1$, we may also consider~$G_1(z)$ as in~\eqref{Ge}
    if~$F_e(z)$ is regular. Its Laurent expansion then slightly differs:
    \begin{equation}\label{MakrovPO2}
        G_1(z)={{\bf s}_{-1}}+\sum_{k=0}^{\infty} (-1)^kz^{-(k+1)} {\bf s}_{k}
    \end{equation}
    for large enough~$z\in \mathbb C$. In this case, the matrix
    sequence~$({\bf s}_k)_{k=-1}^{\infty}$ is called the \emph{sequence of right} (resp.
    \emph{left}) \emph{Markov parameters of first type} of $F(z)$.

\item[{\rm (iii)}] Another option for the odd case $n=2m+1$, is to consider
    \[
        G_2(z)\coloneqq F_{e}(z)\cdot (F_{o}(z))^{-1}
        \quad (\mbox{resp.\ }\ G_2(z)\coloneqq (F_{o}(z))^{-1}\cdot F_{e}(z)\,)
    \]
    with the following Laurent representation:
    \begin{equation}
        G_2(z)=\sum_{k=0}^{\infty} (-1)^{k}z^{-k}{\bf s}_{k} \label{MakrovPO1}
    \end{equation}
    for large enough $z\in \mathbb C$. The matrix sequence $({\bf s}_k)_{k=0}^{\infty}$ is
    called then the \emph{sequence of right} (resp.\ \emph{left})
    \emph{Markov parameters of second type} of $F(z)$.
\end{enumerate}

\end{definition}
Our Definition \ref{DefMarkovParameters} is relevant to the matricial Markov parameters
introduced in~\cite[Definition 2.10]{Riv}.

\begin{remark}\label{ReConnectionSLMPandSRMP}
    Observe that~$({\bf s}_j)_{j=0}^{\infty}$ is the sequence of left Markov parameters
    of~$F(z)\in \mathbb C[z]^{p\times p}$ if and only if~$({\bf s}_j^*)_{j=0}^{\infty}$ is the
    sequence of right Markov parameters of~$F^{\vee}(z)$.
\end{remark}

If $F(z)$ is a scalar polynomial, Definitions~\ref{DefMarkovParameters}~(i) and~(ii) coincide
with the classical notion of Markov parameters. When $F(z)$ is a matrix polynomial of odd
degree, the definition of the Markov parameters of second type given in~(iii) avoids the
restriction that the even part of $F(z)$ is regular, while the first type given in~(ii) allows
to study some of polynomials with degenerate leading coefficients (which is outside the scope of
this paper). These matricial Markov parameters will play different roles in identifying Hurwitz
matrix polynomials.

Supposing that $\mathscr S\coloneqq({\bf s}_j)_{j}$ is a sequence of $p\times p$ complex
matrices, denote the finite or infinite block Hankel matrix associated with~$\mathscr S$ by
\begin{equation*}
    {H}^{\langle \mathscr S\rangle}_{j,k}\coloneqq\begin{bmatrix}
        {\bf s}_j & {\bf s}_{j+1}  & \cdots & {\bf s}_{j+k}  \\
        {\bf s}_{j+1} & {\bf s}_{j+2}  & \cdots&{\bf s}_{j+k+1} \\
        \vdots & \vdots  & \ddots & \vdots\\
        {\bf s}_{j+k} & {\bf s}_{j+k+1}  & \cdots& {\bf s}_{j+2k}
    \end{bmatrix},
\end{equation*}
where~$j \in\N_0$ and~$k\in \N_0\cup\{\infty\}$. For simplicity we
write~${H}_{k}^{\langle \mathscr S\rangle}$ for~${H}_{0,k}^{\langle \mathscr S\rangle}$.

Given $A$, $B\in \mathbb C^{p\times p}$ such that $B$ is nonsingular, denote
\begin{equation*}
\frac{A}{B}\coloneqq A\cdot B^{-1}.
\end{equation*}
Let $F(z)\in\mathbb C[z]^{p\times p}$ be a monic matrix polynomial of degree $n=2m$ or~$n=2m+1$
(whose even part~$F_e(z)$ is regular if $n=2m+1$). Let $\mathscr S$ be the associated sequence
of left or right Markov parameters (of first type when $n=2m+1$). Concerning the matricial
extension of the above stability criteria, one might pose the following questions: \emph{
    \begin{enumerate}
    \item[(Q1)] For Hurwitz stability of~$F(z)$, is it necessary or sufficient that all
        eigenvalues of the matrices~${H}^{\langle \mathscr S\rangle}_{m-1}$,
        ~${H}^{\langle \mathscr S\rangle}_{1,m-1}$ and, when nontrivial,~${\bf s}_{-1}$ are
        positive (or e.g.\ have positive real parts)?
    \item[(Q2)] For Hurwitz stability of~$F(z)$, is it necessary or sufficient that its even
        part~$F_{e}(z)$ and odd part~$F_{o}(z)$ satisfy
         \begin{equation}\label{SContinued}
             \frac{F_{o}(z)}{F_{e}(z)}
             ={\bf c}_{0}+\cfrac{1}{z{{\bf c}_1+\cfrac{1}{{\bf c}_{2m-2}
                         +\cfrac{\ddots}{z{\bf c}_{2m-1}+{\bf c}_{2m}^{-1}}}}},
    \end{equation}
    where all eigenvalues of the matrices~${\bf c}_k$ for $k=1,\ldots, n$ and, when~$n=2m+1$,
    additionally~${\bf c}_{0}$ are positive (or e.g.\ have positive real parts).
    \end{enumerate}
}
\noindent Let us show that it cannot be in general true, even when all coefficient of~$F(z)$ are assumed
to be real.

\begin{claim}
    To the necessity part of~(Q1) and~(Q2), the answer is generally negative. As a
    counterexample, consider the matrix polynomial
    \[
        F(z)\coloneqq
        \begin{bmatrix}{z^{2}+2 z+2} & {2 z+1} \\ {z+\frac{1}{2}} &
            {z^{2}+z+\frac{1}{2}}\end{bmatrix}.
    \]
    Both its right Markov parameters
    $$
    {\bf s}_0=
    \begin{bmatrix}
        2 & 2\\
        1 & 1
    \end{bmatrix}
    \quad\text{and} \quad
    {\bf s}_1=
    \begin{bmatrix}
        2 & 2\\
        1 & 1
    \end{bmatrix}
    \cdot
    \begin{bmatrix}
        2 & 1\\
        \frac 12 & \frac 12
    \end{bmatrix}
    =
    \begin{bmatrix}
        {5} & {3} \\ {\frac{5}{2}} & {\frac{3}{2}}
    \end{bmatrix}
    $$
    are singular and have one positive eigenvalue each. Furthermore, no matricial Stieltjes
    continued fraction as in \eqref{SContinued} exists. However, the zeros of~$F(z)$ may be found
    explicitely, their approximate values are
    \[
        \{-1, -1.876, -0.062 \pm 0.513 \i\};
    \]
    so, $F(z)$ is a Hurwitz matrix polynomial. Another counterexample to the necessity part
    of~(Q1) (with nondegenerate Markov parameters) may be found in
    formula~\eqref{eq:counterexample3} below.
\end{claim}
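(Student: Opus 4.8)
The plan is to verify the four assertions of the Claim by direct computation, the only conceptual care being to treat $F(z)$ as a monic matrix polynomial of degree $n=2$ rather than $4$: the degree of a matrix polynomial is the maximal degree of its scalar entries, while $\det F$ has degree $4$. First I would read off the coefficients $A_0=I_2$, $A_1=\left[\begin{smallmatrix} 2 & 2\\ 1 & 1\end{smallmatrix}\right]$ and $A_2=\left[\begin{smallmatrix} 2 & 1\\ \tfrac12 & \tfrac12\end{smallmatrix}\right]$, so that $m=1$ and, by the Definition preceding Definition~\ref{DefMarkovParameters}, $F_{e}(z)=I_2z+A_2$ and $F_{o}(z)=A_1$.

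Next, to obtain the right Markov parameters I would expand $G_1(z)=F_{o}(z)(F_{e}(z))^{-1}=A_1(zI_2+A_2)^{-1}$. Using $(zI_2+A_2)^{-1}=\sum_{k=0}^{\infty}(-1)^k z^{-(k+1)}A_2^k$ and comparing with~\eqref{MakrovPE} yields the closed form ${\bf s}_k=A_1A_2^k$; in particular ${\bf s}_0=A_1$ and ${\bf s}_1=A_1A_2$, which are exactly the matrices stated in the Claim. The key structural point is that $A_1$ has rank one (indeed $\det A_1=0$), so every ${\bf s}_k=A_1A_2^k$ has rank at most $\rank A_1=1$ and is therefore singular. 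Computing traces then gives the eigenvalues $\{0,3\}$ of ${\bf s}_0$ and $\{0,\tfrac{13}{2}\}$ of ${\bf s}_1$, confirming one positive eigenvalue each. Since $H_{m-1}^{\langle\mathscr S\rangle}={\bf s}_0$ and $H_{1,m-1}^{\langle\mathscr S\rangle}={\bf s}_1$ when $m=1$, these block-Hankel matrices are singular and a fortiori not positive definite, which disposes of the necessity part of~(Q1).

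For~(Q2) I would argue that no continued fraction~\eqref{SContinued} can exist. Because $G_1(z)=A_1(zI_2+A_2)^{-1}$ has rank at most $1$ for every $z$, it is nowhere invertible; moreover ${\bf c}_0=\lim_{z\to\infty}G_1(z)=0$ since $G_1(z)=O(z^{-1})$. Thus the $m=1$ instance of~\eqref{SContinued} would read $G_1(z)=(z{\bf c}_1+{\bf c}_2^{-1})^{-1}$, forcing $G_1(z)$ to be invertible wherever it is defined — a contradiction. Hence the matricial Stieltjes continued fraction does not exist, and this failure is rooted in the same phenomenon that obstructs~(Q1), namely the rank deficiency of the odd part $F_{o}=A_1$.

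Finally, to show that $F(z)$ is nonetheless Hurwitz I would not rely on the approximate zeros but instead apply the classical scalar Routh--Hurwitz test to
\[
    \det F(z)=z^4+3z^3+\tfrac52 z^2+z+\tfrac12 :
\]
forming the $4\times4$ Hurwitz matrix and checking that its leading principal minors $3,\ \tfrac{13}{2},\ 2,\ 1$ are all positive shows every zero of $\det F$ lies in $\{\Re z<0\}$, so $F(z)$ is a Hurwitz matrix polynomial. I do not expect a genuine obstacle, as everything reduces to finite computation; the one step deserving emphasis rather than effort is the argument for~(Q2), where one must recognize that the identical singularity of $G_1(z)$ — inherited from the singular odd part — is precisely what simultaneously blocks the positive definiteness sought in~(Q1) and the continued fraction sought in~(Q2).
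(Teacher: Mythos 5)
Your verification matches the paper's data exactly: with $F(z)$ monic of degree $n=2$ (so $m=1$), $F_e(z)=zI_2+A_2$, $F_o(z)=A_1$, one gets ${\bf s}_k=A_1A_2^k$, hence ${\bf s}_0=A_1$ and ${\bf s}_1=A_1A_2$ as stated, each of rank one with spectra $\{0,3\}$ and $\{0,\tfrac{13}{2}\}$, so $H_{m-1}^{\langle\mathscr S\rangle}={\bf s}_0$ and $H_{1,m-1}^{\langle\mathscr S\rangle}={\bf s}_1$ fail every positivity condition in (Q1). Where you genuinely differ from the paper is in rigor: the paper offers no argument beyond listing approximate zeros and asserting that no fraction \eqref{SContinued} exists, whereas you (a) certify stability exactly by Routh--Hurwitz applied to $\det F(z)=z^4+3z^3+\tfrac52 z^2+z+\tfrac12$ (the minors $3,\tfrac{13}{2},2,1$ are correct, and $-1$ is indeed an exact root), and (b) actually derive the nonexistence of the continued fraction from the rank-one degeneracy of $G_1(z)$. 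Both upgrades are worthwhile; the approach is otherwise the same direct computation.

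One caveat in your (Q2) argument: the step ${\bf c}_0=\lim_{z\to\infty}G_1(z)=0$ silently assumes $(z{\bf c}_1+{\bf c}_2^{-1})^{-1}\to 0$ at infinity, which holds when ${\bf c}_1$ is invertible but can fail otherwise --- e.g.\ for ${\bf c}_1=\operatorname{diag}(1,0)$ and ${\bf c}_2=I_2$ one has $(z{\bf c}_1+I_2)^{-1}\to\operatorname{diag}(0,1)\neq 0$, so a singular ${\bf c}_1$ could in principle be compensated by a nonzero ${\bf c}_0$. For refuting the necessity part of (Q2) this is harmless, since the positivity (or positive-real-part) requirement on the eigenvalues of ${\bf c}_1$ forces its invertibility, and your contradiction then stands. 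But to justify the Claim's blanket assertion that no fraction of the form \eqref{SContinued} exists at all, you should close this loophole, which can be done by matching Laurent coefficients in the identity $(G_1(z)-{\bf c}_0)(z{\bf c}_1+{\bf c}_2^{-1})=I_2$: the coefficients of $z$ and $z^0$ give ${\bf c}_0{\bf c}_1=0$ and ${\bf s}_0{\bf c}_1-{\bf c}_0{\bf c}_2^{-1}=I_2$, while the negative powers give ${\bf s}_k{\bf c}_2^{-1}={\bf s}_{k+1}{\bf c}_1$ for all $k\ge 0$. Writing ${\bf s}_k=u\,w^{\rm T}A_2^k$ with $u=(2,1)^{\rm T}$, $w^{\rm T}=(1,1)$, and noting that $w^{\rm T}$ and $w^{\rm T}A_2=(\tfrac52,\tfrac32)$ span $\mathbb C^{1\times 2}$, the last relations force ${\bf c}_2^{-1}=A_2{\bf c}_1$; since $A_2$ is invertible, a singular ${\bf c}_1$ makes ${\bf c}_2^{-1}$ singular (impossible), while an invertible ${\bf c}_1$ forces ${\bf c}_0=0$ and then ${\bf s}_0{\bf c}_1=I_2$, impossible since ${\bf s}_0$ is singular. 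With that supplement your proof is complete and strictly more detailed than the paper's.
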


\begin{claim}
    The sufficiency in questions~(Q1) and~(Q2) does not generally hold. Indeed, if we take
    \[
        F(z) \coloneqq
        \begin{bmatrix}
            {z^{2}+z+1} & {\frac{z}{3}+\frac 12} \\[2pt] {5 z+1} & {z^{2}+2 z + 1}
        \end{bmatrix}
        ,
    \]
    the two initial right Markov parameters will be
    $$
    {\bf s}_0=
    \begin{bmatrix}
        {1} & {\frac{1}{3}} \\[2pt] {5} & {2}
    \end{bmatrix}\quad \mbox{and} \quad
    {\bf s}_1=
    \begin{bmatrix}
        {\frac{4}{3}} & {\frac{5}{6}} \\[2pt] 7 & {\frac{9}{2}}
    \end{bmatrix},
    $$
    with only positive eigenvalues. (It may be shown that eigenvalues of all Markov parameters
    in this example are positive). At the same time, there exists a matricial Stieltjes
    continued fraction of the form~\eqref{SContinued}, where
    \[
        {\bf c}_1=
        \begin{bmatrix}
            {6} & -1 \\[2pt] {-15} & {3}
        \end{bmatrix}\quad \mbox{and} \quad
        {\bf c}_2=
        \begin{bmatrix}
            {\frac{4}{3}} & -{\frac{1}{3}} \\[2pt] 6 & -1
        \end{bmatrix}.
    \]
    All eigenvalues of both matrices ${\bf c}_1$ and ${\bf c}_2$ have positive real parts.
    Nevertheless, the zeros of~$F(z)$ are approximately
    \[
    \{    -1.581 \pm 0.396 \i, 0.081 \pm 0.426 \i\},
    \]
    and hence~$F(z)$ cannot be a Hurwitz matrix polynomial.
\end{claim}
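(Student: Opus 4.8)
The plan is to establish the claim by explicit computation, organising everything around the two coefficient matrices $A_1=\begin{bmatrix}1&\frac13\\5&2\end{bmatrix}$ and $A_2=\begin{bmatrix}1&\frac12\\1&1\end{bmatrix}$, which are the $z$-coefficient and the constant term of this monic degree-$2$ example. First I would record that, since $n=2m$ with $m=1$, the splitting gives $F_e(z)=zI_2+A_2$ and $F_o(z)=A_1$, so that $G_1(z)=F_o(z)(F_e(z))^{-1}=A_1(zI_2+A_2)^{-1}$. Expanding $(zI_2+A_2)^{-1}=\sum_{k\ge0}(-1)^kA_2^kz^{-(k+1)}$ and matching with~\eqref{MakrovPE} yields the closed form ${\bf s}_k=A_1A_2^{k}$ for every $k\ge0$; specialising to $k=0,1$ reproduces the two matrices ${\bf s}_0=A_1$ and ${\bf s}_1=A_1A_2$ displayed in the claim, and a direct trace–determinant check ($\det{\bf s}_0=\tfrac13$, $\det{\bf s}_1=\tfrac16$, both traces positive, both discriminants positive) confirms that each has two positive eigenvalues.

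The step I expect to be the main obstacle is the parenthetical assertion that \emph{every} ${\bf s}_k=A_1A_2^{k}$ has positive eigenvalues. Here I would diagonalise $A_2=SDS^{-1}$ with $D=\operatorname{diag}(\lambda_1,\lambda_2)$, $\lambda_{1,2}=1\pm\tfrac1{\sqrt2}$, so that $\det{\bf s}_k=\det A_1\,(\det A_2)^{k}=\tfrac13\cdot2^{-k}>0$ and $\operatorname{tr}{\bf s}_k=\operatorname{tr}(S^{-1}A_1S\,D^{k})=b_{11}\lambda_1^{\,k}+b_{22}\lambda_2^{\,k}$, where $b_{ij}$ denote the entries of $B\coloneqq S^{-1}A_1S$. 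An explicit computation gives $b_{11}=\tfrac32+\tfrac{17\sqrt2}{12}>0$, $b_{22}=\tfrac32-\tfrac{17\sqrt2}{12}<0$, with $b_{11}>|b_{22}|$ and $\lambda_1>1>\lambda_2>0$, whence the positive term dominates and $\operatorname{tr}{\bf s}_k>0$ for all $k$. For reality of the eigenvalues I would write the discriminant as
\[
    \operatorname{tr}^2{\bf s}_k-4\det{\bf s}_k
    =b_{11}^2\lambda_1^{2k}+b_{22}^2\lambda_2^{2k}-\bigl(4\det A_1-2b_{11}b_{22}\bigr)(\lambda_1\lambda_2)^{k},
\]
and use $\lambda_1^2\ge\lambda_1\lambda_2$ together with $b_{11}^2>4\det A_1-2b_{11}b_{22}$ to conclude it stays strictly positive. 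Positive trace, positive determinant and positive discriminant then force both eigenvalues of ${\bf s}_k$ to be real and positive for every $k$.

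Next I would verify the continued fraction. For $m=1$ the expansion~\eqref{SContinued} collapses to $F_o(z)(F_e(z))^{-1}={\bf c}_0+(z{\bf c}_1+{\bf c}_2^{-1})^{-1}$; since $G_1(z)\to0$ as $z\to\infty$ one must take ${\bf c}_0=0$, and then the identity $A_1(zI_2+A_2)^{-1}=(z{\bf c}_1+{\bf c}_2^{-1})^{-1}$ is equivalent, after inverting, to $zA_1^{-1}+A_2A_1^{-1}=z{\bf c}_1+{\bf c}_2^{-1}$. Comparing powers of $z$ gives ${\bf c}_1=A_1^{-1}$ and ${\bf c}_2=A_1A_2^{-1}$, which reproduce the two matrices in the claim. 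It then remains to read off their spectra: $\operatorname{tr}{\bf c}_1=9$, $\det{\bf c}_1=3$ with positive discriminant give two positive eigenvalues, while $\operatorname{tr}{\bf c}_2=\tfrac13$, $\det{\bf c}_2=\tfrac23$ with negative discriminant give the complex pair $\tfrac16\pm\tfrac{\sqrt{23}}6\,\i$, again with positive real part.

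Finally, to show that $F(z)$ is \emph{not} Hurwitz I would avoid the approximate roots and instead apply the scalar Routh--Hurwitz test to $\det F(z)=z^4+3z^3+\tfrac73z^2+\tfrac16z+\tfrac12$. Forming the Routh array, its first column is $1,\ 3,\ \tfrac{41}{18},\ -\tfrac{121}{246},\ \tfrac12$; the two sign changes show that $\det F$ has exactly two zeros in the open right half-plane, so $F(z)$ fails to be a Hurwitz matrix polynomial. Assembling the four parts yields the claim, the only genuinely nontrivial ingredient being the positivity of the whole sequence of Markov parameters.
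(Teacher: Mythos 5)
Your verification is correct, and it follows the same overall skeleton as the paper's (the paper proves this claim purely by exhibiting the computations: the two Markov parameters, the two continued-fraction coefficients, and the zeros), but you strengthen it at the two points where the paper is least explicit. First, the paper's parenthetical remark that \emph{all} Markov parameters have positive eigenvalues is left unproved there (``It may be shown\dots''), whereas you actually prove it: the closed form ${\bf s}_k=A_1A_2^k$ is right, your values $b_{11}=\tfrac32+\tfrac{17\sqrt2}{12}$, $b_{22}=\tfrac32-\tfrac{17\sqrt2}{12}$ check out, and the discriminant bound works since $4\det A_1-2b_{11}b_{22}=\tfrac{175}{36}<b_{11}^2$ and $\lambda_1^{2k}\ge(\lambda_1\lambda_2)^k$, so trace, determinant and discriminant of ${\bf s}_k$ are all positive for every $k$. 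Second, where the paper concludes non-stability from approximate zeros, you replace this with an exact argument: $\det F(z)=z^4+3z^3+\tfrac73z^2+\tfrac16z+\tfrac12$ is correct, and the Routh first column $1,\,3,\,\tfrac{41}{18},\,-\tfrac{121}{246},\,\tfrac12$ has exactly two sign changes, consistent with the paper's pair of zeros near $0.081\pm0.426\i$ in the right half-plane. Your identification ${\bf c}_1=A_1^{-1}$, ${\bf c}_2=A_1A_2^{-1}$ (from inverting $A_1(zI_2+A_2)^{-1}=(z{\bf c}_1+{\bf c}_2^{-1})^{-1}$ with ${\bf c}_0=0$) reproduces the displayed matrices, and the spectra $\{\text{two positive reals}\}$ for ${\bf c}_1$ and $\tfrac16\pm\tfrac{\sqrt{23}}{6}\i$ for ${\bf c}_2$ are as stated. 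In short: same counterexample-verification strategy, but your version is fully rigorous and self-contained where the paper relies on an asserted fact and on numerics; the paper's version buys brevity, yours buys an exact proof of the parenthetical claim and of non-Hurwitzness.
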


Quadratic (of degree~$2$) matrix polynomials with Hermitian coefficients may be considered as a
close analogue of real scalar polynomials. Indeed, they are necessarily stable when their
coefficients are positive definite, see e.g.~\cite[Chapter 13]{GLRMP}. However, the converse is
generally not true if the coefficient near the linear term fails to be positive definite: for
example, the real symmetric matrix polynomial
\begin{equation}\label{eq:counterexample3}
    \begin{bmatrix}
       z^2+  2z +16 & \frac{5}{2}z +1\\
        \frac{5}{2}z+1 & z^2+  2z+3
    \end{bmatrix}
\end{equation}
turns to be Hurwitz-stable. Furthermore, polynomials of higher degrees do not allow such a
direct analogy between the cases of Hermitian and scalar coefficients.

Consequently, to give a proper extension of the stability criterion via Markov parameters for
all complex/real matrix polynomials seems to be impossible. So, the following questions arise
naturally: \emph{
    \begin{enumerate}
    \item[(Q3)] Which additional conditions on the coefficients or Markov parameters of a matrix
        polynomial may be posed to make (Q1) and (Q2) true?
    \item [(Q4)] How can we formulate the stability tests via Markov parameters of second type?
    \end{enumerate}
}
Before answering them, we will find out in the next section how to calculate the number of zeros
that a matrix polynomial has in different parts of the complex plane. Then Section~\ref{Main},
will be devoted to answers to~(Q3) and~(Q4), as well as connections between Hurwitz matrix
polynomials and total positivity.

\section{Zero localization of matrix polynomials}\label{SectionRouth-HurwitzMP}

\subsection{Matricial Hermite-Fujiwara theorem revisited}

Recalling the definition of the inertia of a matrix: For~$A\in \mathbb C^{p\times p}$, the
inertia of~$A$ with respect to the imaginary axis~${\rm i}\R$ is defined by the
triple \label{PagInertia}
\begin{equation*}
\In(A)\coloneqq(\pi(A),\nu(A),\delta(A)),
\end{equation*}
where~$\pi(A)$, $\nu(A)$, and~$\delta(A)$ stand for the number of eigenvalues (counting
algebraic multiplicities) of~$A$ with positive, negative, and zero real parts, respectively.

For the inertia of regular matrix polynomials, let us adopt the notation from~\cite{LRT}, which is
essentially as in~\cite{LT82} (see also~\cite[Proposition 2.2]{LRT}).
\begin{definition}
    Let~$F(z)\in {\mathbb C}[z]^{p\times p}$ be regular. Denote by~$\gamma_+(F)$, $\gamma_-(F)$, $\gamma_0(F)$ the number of
    zeros of~$F(z)$ (counting with multiplicities), in the open upper half plane, the
    open lower half plane and on the real axis, respectively. The triple
\begin{equation*}
\gamma(F)\coloneqq(\gamma_+(F),\gamma_-(F),\gamma_0(F))
\end{equation*}
 is called \textit{the inertia
of}~$F(z)$ \textit{with respect to}~$\R$.
Analogously, the triple
\begin{equation*}
\gamma'(F)\coloneqq(\gamma'_+(F),\gamma'_-(F),\gamma'_0(F)),
\end{equation*}
is called \textit{the inertia of}~$F(z)$
\textit{with respect to}~${\rm i}\R$, replacing the upper half plane by the right half
plane, the lower half plane by the left half plane, and the real axis
by the imaginary axis, respectively.
\end{definition}

A regular matrix polynomial~$F(z)\in {\mathbb C}[z]^{p\times p}$ of degree $n$ is Hurwitz stable
if and only if~$\gamma'_-(F)=np$. So, our first step is to determine the inertia~$\gamma'(F)$ of
a matrix polynomial~$F(z)$, that is to solve the Routh-Hurwitz problem for~$F(z)$. In the scalar
case, the relation between~the inertia of $F(z)$ and the inertia of a special Hermitian matrix
--- the Bezoutian --- is well known, see e.g.~\cite{Fuj} or~\cite[pp.\,466--467]{LT85}. The
matrix case may be found in~\cite{DyV,LRT,LT82}. We recall a particular refinement of the
classical Hermite-Fujiwara theorem by Lerer and Tismenetsky~\cite{LT82}. It is stated in terms of
the generalized Bezoutian matrices and greatest common divisors of matrix polynomials.

Let matrix polynomials~$L(z),\wtilde L(z),M(z), \wtilde M(z)\in \mathbb C[z]^{p\times p}$
satisfy
\begin{equation}\label{CommonMultiple}
    \wtilde M(z)\wtilde L(z)=M(z)L(z).
\end{equation}
Then the \emph{Anderson-Jury Bezoutian matrix} ${\mathbf B}_{\wtilde M,M}(L,\wtilde L)$
\emph{associated with the quadruple}~$(\wtilde M,\wtilde L,M,L)$ is defined by:
\begin{multline}
    (I_p,z I_p,\ldots,z^{n_1-1}I_p){\mathbf B}_{\wtilde M,M}(L,\wtilde L)(I_p,u I_p,\ldots,u^{n_2-1}I_p)^{\rm T}
    \\
    =\frac1{z-u} \left(\wtilde M(z)\wtilde L(u)-M(z)L(u)\right), \label{BezoutianDefintion}
\end{multline}
where $n_1\coloneqq\max\{\deg M, \deg \wtilde M\}$ and
$n_2\coloneqq\max\{\deg L, \deg \wtilde L\}$.

The Anderson-Jury Bezoutian matrix~${\mathbf B}_{\wtilde M,M}(L,\wtilde L)$ is skew-symmetric in the
sense that
\begin{equation*}
{\mathbf B}_{\wtilde M,M}(L,\wtilde L)=-{\mathbf B}_{M,\wtilde M}(\wtilde L,L).
\end{equation*}
For commuting $L(z)$ and $\wtilde L(z)$, i.e., when~$L(z)\wtilde L(z)=\wtilde L(z)L(z)$, it is natural to choose~$\wtilde M(z)=L(z)$ and~$M(z)=\wtilde L(z)$. For a nontrivial
choice of $\wtilde M(z)$ and $M(z)$ in the general non-commutative case, we refer the reader to the
construction of the common multiples via spectral theory of matrix polynomials:
see~\cite[Theorem 9.11]{GLRMP} for the monic case and~\cite[Theorem 2.2]{GKLR} for the comonic
case.

\begin{definition}
    Suppose that $F(z), L(z)\in \mathbb C[z]^{p\times p}$. $L(z)$ is called a \emph{right}
    (resp.\ \emph{left}) \emph{divisor} of $F(z)$ if there exists an
    $M(z)\in \mathbb C[z]^{p\times p}$ such that
    \begin{equation*}
        F(z)=M(z)L(z) \quad (\mbox{resp.\ } F(z)=L(z)M(z)).
    \end{equation*}
    Let additionally $\wtilde F(z)\in \mathbb C[z]^{p\times p}$. Then~$L(z)$ is called a
    \emph{right} (resp.\ \emph{left}) \emph{common divisor} of~$F(z)$ and $\wtilde F(z)$ if
    $L(z)$ is a right (resp.\ left) divisor of $F(z)$ and also a right (resp.\ left) divisor of
    $\wtilde F(z)$.
    
    Furthermore, $L(z)$ is called a \emph{greatest right} (resp.\ \emph{left})
    \emph{common divisor} (\emph{GRCD} (resp.\ \emph{GLCD})) \emph{of $F(z)$ and} $\wtilde F(z)$
    if any other right (resp.\ left) common divisor is a right (resp.\ left) divisor of $L(z)$.

    Some basic properties of GRCDs/GLCDs are given in Appendix~\ref{sec:great-comm-divis}.
\end{definition}

\begin{lemma}[{\cite[Theorem 2.1]{LT82}}]\label{LemHermite-Fujiwara}
    Given~a regular~matrix polynomial $L(z)\in \mathbb C[z]^{p\times p}$,
    let~$L_1(z)\in \mathbb C[z]^{p\times p}$ be also regular such that
    $$L^{\vee}(z)L(z)=L^{\vee}_1(z)L_1(z). $$
    Then
\begin{align*}
\gamma_+(L)&=\pi(-{\rm i} {\mathbf B}_{L_1^{\vee},L^{\vee}}(L,L_1)) + \gamma_+(L_0),\\
\gamma_-(L)&=\nu(-{\rm i} {\mathbf B}_{L_1^{\vee},L^{\vee}}(L,L_1)) + \gamma_-(L_0),\\
\gamma_0(L)&=\delta(-{\rm i} {\mathbf B}_{L_1^{\vee},L^{\vee}}(L,L_1))-\gamma_+(L_0)-\gamma_-(L_0),
\end{align*}
where~$L_0(z)$ is a GRCD of~$L(z)$ and~$L_1(z)$.
\end{lemma}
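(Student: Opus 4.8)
The plan is to reduce to a coprime (nondegenerate) situation, read off the inertia of the Hermitian Bezoutian from the location of the zeros by means of the spectral theory of matrix polynomials, and then re-attach the common part~$L_0$ separately.

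First I would check that $-\mathrm{i}\,\mathbf{B}_{L_1^{\vee},L^{\vee}}(L,L_1)$ is genuinely Hermitian, so that its inertia is defined. Writing $V(z)\coloneqq(I_p,zI_p,\dots,z^{n-1}I_p)$ with $n=\max\{\deg L,\deg L_1\}$, the defining identity~\eqref{BezoutianDefintion} together with the reflection identity $\big(F^{\vee}(w)\big)^{*}=F(\bar w)$ (immediate from the definition of $F^{\vee}$) lets me conjugate-transpose both sides and recognise the right-hand side as the defining bilinear form of $\mathbf{B}_{L^{\vee},L_1^{\vee}}(L_1,L)$; hence $\mathbf{B}_{L_1^{\vee},L^{\vee}}(L,L_1)^{*}=\mathbf{B}_{L^{\vee},L_1^{\vee}}(L_1,L)$. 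Combining this with the skew-symmetry relation $\mathbf{B}_{L_1^{\vee},L^{\vee}}(L,L_1)=-\mathbf{B}_{L^{\vee},L_1^{\vee}}(L_1,L)$ quoted above yields $\mathbf{B}^{*}=-\mathbf{B}$, so $-\mathrm{i}\,\mathbf{B}$ is Hermitian. This computation is routine but must be carried out carefully, tracking the degrees and the placement of~$\vee$.

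Next I would split off a GRCD. Choose a GRCD $L_0$ of $L$ and $L_1$ and write $L=ML_0$, $L_1=M_1L_0$ with $M,M_1$ right coprime. Using $(PQ)^{\vee}=Q^{\vee}P^{\vee}$, the hypothesis $L^{\vee}L=L_1^{\vee}L_1$ becomes $L_0^{\vee}M^{\vee}ML_0=L_0^{\vee}M_1^{\vee}M_1L_0$, and cancelling the regular factors $L_0^{\vee},L_0$ gives $M^{\vee}M=M_1^{\vee}M_1$; thus $(M,M_1)$ satisfies the same hypotheses and is coprime. Since $\det L=\det M\cdot\det L_0$, the inertia counts are additive, $\gamma_{\pm}(L)=\gamma_{\pm}(M)+\gamma_{\pm}(L_0)$ and likewise for $\gamma_0$. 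It therefore suffices to prove the three identities for the coprime pair $(M,M_1)$ with $L_0$ trivial, and then add back $\gamma_{\pm}(L_0)$, the adjustment in the $\gamma_0$ line arising from matching kernel dimensions.

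The heart of the argument is the coprime case. There I would first invoke the standard fact that the rank of an Anderson--Jury Bezoutian drops exactly by the degree of the common divisor, so that $\mathbf{B}_{M_1^{\vee},M^{\vee}}(M,M_1)$ is nonsingular; moreover a coprime pair satisfying $M^{\vee}M=M_1^{\vee}M_1$ has no real zeros (a real zero of $M$ would, by the reflection identity, be a zero of $M^{\vee}M=M_1^{\vee}M_1$ of even order and hence a common zero of $M$ and $M_1$), so $\gamma_0(M)=0=\delta(-\mathrm{i}\,\mathbf{B})$. To evaluate the signature I would realise the Bezoutian through a standard triple $(X,T,Y)$ of $M$ in the sense of~\cite{GLRMP}, whose spectrum is exactly $\sigma(M)$; via the resolvent form $M(z)^{-1}=X(zI-T)^{-1}Y$ one rewrites $-\mathrm{i}\,\mathbf{B}$ as a Hermitian matrix congruent to a solution of a Lyapunov-type equation built from $T$, the common-multiple relation $M^{\vee}M=M_1^{\vee}M_1$ being precisely what makes the cross terms cancel and leaves the right-hand side sign-definite. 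The main inertia theorem then identifies $\In(-\mathrm{i}\,\mathbf{B})$ with the inertia of $T$ with respect to $\mathrm{i}\R$; after the rotation induced by the factor $-\mathrm{i}$, eigenvalues of $T$ in the open upper half-plane (counted by $\gamma_+(M)$) contribute to $\pi$ and those in the open lower half-plane to $\nu$. For simple zeros this is a one-line residue computation; the real work, and where I expect the main obstacle, is pushing it through Jordan chains for higher multiplicities and non-diagonalisable $T$, and verifying that re-attaching $L_0$ sends its upper/lower zeros into the kernel of the full Bezoutian rather than into $\pi$ or $\nu$, which produces the corrections $+\gamma_{\pm}(L_0)$ in the first two lines and $-\gamma_+(L_0)-\gamma_-(L_0)$ in the third.
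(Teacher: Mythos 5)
First, a point of reference: the paper does \emph{not} prove this lemma --- it is quoted verbatim from Lerer--Tismenetsky \cite{LT82}, so there is no internal proof to compare against. Measured against the original, your outline retraces essentially the right route: Hermitianness of $-\i\,\mathbf B$, splitting off a GRCD, and then a realisation via standard triples feeding a Lyapunov-type equation and the general inertia theorem is exactly the architecture of the proof in \cite{LT82}. Your preliminary verifications are sound: $\mathbf B^{*}=-\mathbf B$ does follow from the reflection identity $\bigl(F^{\vee}(w)\bigr)^{*}=F(\bar w)$ applied to the generating form; cancelling the regular factors $L_0^{\vee}$, $L_0$ is legitimate (regular matrix polynomials are not zero divisors); and the coprime pair indeed has no real zeros, though the clean argument is via kernel vectors rather than ``even order'': for $\lambda\in\R$ and $M(\lambda)x=0$ one gets $\lVert M_1(\lambda)x\rVert^2=x^{*}M_1^{\vee}(\lambda)M_1(\lambda)x=x^{*}M^{\vee}(\lambda)M(\lambda)x=0$, and a Bezout identity $XM+YM_1=I$ excludes a common kernel vector.

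There are, however, genuine soft spots. (a) The re-attachment of $L_0$ is not a spectral mechanism of ``zeros entering the kernel.'' The correct mechanism is the factorisation of the generating function,
\[
\frac{L_1^{\vee}(z)L_1(u)-L^{\vee}(z)L(u)}{z-u}
=L_0^{\vee}(z)\,\frac{M_1^{\vee}(z)M_1(u)-M^{\vee}(z)M(u)}{z-u}\,L_0(u),
\]
which yields the congruence-type identity $\mathbf B_{L_1^{\vee},L^{\vee}}(L,L_1)=R^{*}\,\mathbf B_{M_1^{\vee},M^{\vee}}(M,M_1)\,R$ with $R$ the block convolution (multiplication-by-$L_0$) matrix; surjectivity of $R$ preserves $\pi$ and $\nu$, while $\Ker R$ is absorbed into the kernel regardless of where the zeros of $L_0$ lie. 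The $+\gamma_{\pm}(L_0)$ corrections in the first two lines then come solely from $\gamma_{\pm}(L)=\gamma_{\pm}(M)+\gamma_{\pm}(L_0)$, not from the Bezoutian at all. (b) Degree bookkeeping: subtracting the first two identities from $\pi+\nu+\delta=np$ shows the third identity is \emph{equivalent} to $\deg\det L=np$; if the leading coefficient of $L$ is singular this fails and zeros at infinity must be accounted for --- your sketch silently assumes $\deg M=\deg L-\deg L_0$ and additive zero counts, which is exactly where degenerate cases break. (In the paper's sole application, $L(z)=F(\i z)$ with $F$ monic, this is harmless, but a proof of the lemma as stated must address it.) (c) The two pillars you invoke as standard facts --- that $\dim\Ker\mathbf B$ equals the number of common zeros (so the coprime Bezoutian is nonsingular, giving $\delta=0$), and the Lyapunov-equation realisation through a standard triple $(X,T,Y)$ combined with the general inertia theorem (controllability supplied by coprimeness, $\In(-\i\,\mathbf B)$ identified with the inertia of $T$) --- are themselves the substance of \cite{LT82}, including the Jordan-chain analysis you flag as the main obstacle. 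Invoked as black boxes, your argument is a faithful roadmap of the original proof rather than an independent one; as a reduction to cited results it is acceptable, but it does not discharge the lemma from scratch.
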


Lemma~\ref{LemHermite-Fujiwara} describes the inertia of~$L(z)$ in terms of the inertia of the
matrix~$-{\rm i} {\mathbf B}_{L_1^{\vee},L^{\vee}}(L,L_1)$. In general, the inertia
of~$-{\rm i} {\mathbf B}_{L_1^{\vee},L^{\vee}}(L,L_1)$ depends on the choice of~$L_1(z)$. In the
scalar case, the obvious choice~$L_1(z)=L^{\vee}(z)$ leads to the classical Bezoutian which occurs in
the Hermite-Fujiwara theorem.

\subsection{Inertia representation of matrix polynomials  in terms of Hermitian Markov parameters}

Given a monic matrix polynomial~$F(z)\in \mathbb C[z]^{p\times p}$, define a related composite
polynomial~$L(z)\coloneqq F({\rm i} z)$. Then the inertia~$\gamma'(F)$ may be obtained
as~$\gamma(L)$ in Lemma \ref{LemHermite-Fujiwara}. So, the remaining question is how to choose
an appropriate~$L_1(z)$ and, accordingly, $L_0(z)$ such that the inertia of
$-{\rm i} {\mathbf B}_{L_1^{\vee},L^{\vee}}(L,L_1)$ may be expressed via the Markov parameters
of~$F(z)$. Due to this reason, we consider the case when the Markov parameters are Hermitian
matrices and apply a lemma from {\cite[Lemma 2.2]{AnJu}} bridging the Anderson-Jury Bezoutian
matrices and block Hankel matrices.

\begin{lemma}\label{LemCongruenceBezoutHankel}
    Let $N_R(z)$, $D_R(z)$, $N_L(z)$ and $D_L(z)\in \mathbb C[z]^{p\times p}$ be regular such
    that $\deg D_R>\deg N_R$, $\deg D_L>\deg N_L$ and, for all large enough~$z\in \mathbb C$,
   \[
       \big(D_L(z)\big)^{-1}N_L(z)=N_R(z)\big(D_R(z)\big)^{-1}=\sum_{k=0}^{\infty}z^{-(k+1)}{\bf s}_k.
    \]
     If~$D_L(z)$
    and~$D_R(z)$ are written in the form:
    \begin{equation*}
        D_L(z)=\sum_{k=0}^{m_L} D_{L,m_L-k} z^k
        \quad\text{and}\quad
        D_R(z)=\sum_{k=0}^{m_R} D_{R,m_R-k} z^k,
    \end{equation*}
    where~$D_{L,k}\in \mathbb C^{p\times p}$ for~$k=0,\ldots, m_L$
    and~$D_{R,k}\in \mathbb C^{p\times p}$ for~$k=0,\ldots,  m_R$, then
\begin{multline*}
  {\mathbf B}_{D_L, N_L}(D_R,N_R)
  =
       \begin{bmatrix}
           D_{L,m_L-1} & \cdots & D_{L,0}\\
           \vdots & \iddots & \\
           D_{L,0} &&
       \end{bmatrix}
  \\
  \cdot
       \begin{bmatrix}
           {\bf s}_{0} & \cdots & {\bf s}_{m_R-1}\\
           \vdots &   &  \vdots \\
           {\bf s}_{m_L-1} &  \cdots &  {\bf s}_{m_R+m_L-2}
       \end{bmatrix}
                                       \begin{bmatrix}
                                           D_{R,m_R-1} & \cdots & D_{R,0}\\
                                           \vdots & \iddots & \\
                                           D_{R,0} &&
                                       \end{bmatrix}.
\end{multline*}
\end{lemma}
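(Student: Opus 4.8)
The plan is to route everything through the defining generating identity \eqref{BezoutianDefintion} of the Anderson--Jury Bezoutian and reduce the claim to a comparison of the coefficients of $z^iu^j$. Matching notation, the matrix ${\mathbf B}_{D_L,N_L}(D_R,N_R)$ corresponds to $\wtilde M=D_L$, $M=N_L$, $L=D_R$, $\wtilde L=N_R$, so the prerequisite common-multiple relation \eqref{CommonMultiple} reads $D_L(z)N_R(z)=N_L(z)D_R(z)$. I would first observe that this is immediate: multiplying the hypothesis $(D_L(z))^{-1}N_L(z)=N_R(z)(D_R(z))^{-1}$ on the left by $D_L(z)$ and on the right by $D_R(z)$ yields $N_L(z)D_R(z)=D_L(z)N_R(z)$ for all large $z$, hence identically since both sides are matrix polynomials. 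Because $\deg D_L=m_L>\deg N_L$ and $\deg D_R=m_R>\deg N_R$, the integers $n_1,n_2$ of \eqref{BezoutianDefintion} equal $m_L$ and $m_R$, so ${\mathbf B}_{D_L,N_L}(D_R,N_R)=[B_{ij}]_{i,j}$ is an $m_L\times m_R$ block matrix with
\[
\sum_{i=0}^{m_L-1}\sum_{j=0}^{m_R-1} z^i u^j B_{ij}=\frac{D_L(z)N_R(u)-N_L(z)D_R(u)}{z-u}.
\]

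Next I would factor the numerator through the common series $G(w)\coloneqq\sum_{k\ge0}w^{-(k+1)}{\bf s}_k$. The hypothesis gives $N_L(z)=D_L(z)G(z)$ and $N_R(u)=G(u)D_R(u)$ as Laurent expansions at infinity, whence
\[
D_L(z)N_R(u)-N_L(z)D_R(u)=D_L(z)\bigl(G(u)-G(z)\bigr)D_R(u).
\]
A direct computation of the scalar kernel, using $z^{k+1}-u^{k+1}=(z-u)\sum_{a=0}^{k}z^a u^{k-a}$, gives
\[
\frac{G(u)-G(z)}{z-u}=\sum_{a\ge0}\sum_{c\ge0} z^{-(c+1)}\,{\bf s}_{a+c}\,u^{-(a+1)},
\]
i.e.\ the generating kernel of the infinite block-Hankel matrix $[{\bf s}_{a+c}]_{a,c\ge0}$ in the variables $z^{-1},u^{-1}$.

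Finally I would extract the coefficient of $z^iu^j$ for $0\le i\le m_L-1$ and $0\le j\le m_R-1$. Writing $D_L(z)=\sum_{l=0}^{m_L}D_{L,l}z^{m_L-l}$, the coefficient of $z^i$ in $\sum_c D_L(z)z^{-(c+1)}{\bf s}_{a+c}$ forces $l=m_L-1-i-c$, so $c$ runs over $0\le c\le m_L-1-i$ and contributes $D_{L,m_L-1-i-c}$; symmetrically, writing $D_R(u)=\sum_{r=0}^{m_R}D_{R,r}u^{m_R-r}$, the coefficient of $u^j$ forces $a\le m_R-1-j$ and contributes $D_{R,m_R-1-j-a}$. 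This yields
\[
B_{ij}=\sum_{c=0}^{m_L-1-i}\sum_{a=0}^{m_R-1-j} D_{L,m_L-1-i-c}\,{\bf s}_{a+c}\,D_{R,m_R-1-j-a},
\]
which is exactly the $(i,j)$ block of the triple product in the statement: the left anti-triangular factor has $(i,c)$ block $D_{L,m_L-1-i-c}$ (vanishing when $i+c\ge m_L$), the middle factor has $(c,a)$ block ${\bf s}_{c+a}$, and the right anti-triangular factor has $(a,j)$ block $D_{R,m_R-1-a-j}$ (vanishing when $a+j\ge m_R$).

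I expect the only real subtlety to lie in the bookkeeping of this last step: $G$ is merely a formal Laurent series at infinity, so one must justify that comparing coefficients of $z^iu^j$ with $i,j\ge0$ is legitimate. This is harmless because the displayed left-hand side is a genuine polynomial --- the common-multiple relation makes the numerator vanish on $z=u$, so $(z-u)$ divides it --- and extracting a fixed coefficient $z^iu^j$ from the series side involves only finitely many terms, so no convergence question arises; moreover the two truncation windows $0\le c\le m_L-1-i$ and $0\le a\le m_R-1-j$ emerge automatically from the requirement that $D_L(z)z^{-(c+1)}$ and $u^{-(a+1)}D_R(u)$ contribute nonnegative powers, and these windows are precisely the anti-triangular supports of the two coefficient matrices. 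The rest is routine index alignment.
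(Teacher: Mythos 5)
Your proof is correct: the reduction of the common-multiple condition \eqref{CommonMultiple} to $D_L(z)N_R(z)=N_L(z)D_R(z)$, the factorization of the Bezoutian kernel as $D_L(z)\bigl(G(u)-G(z)\bigr)D_R(u)/(z-u)$, the Cauchy-kernel expansion into the block-Hankel generating series, and the coefficient extraction (with the anti-triangular truncation windows emerging from the nonnegativity of the exponents) all check out, including the convergence caveat, which is harmless since $G$ is the Laurent expansion of a rational matrix function. Note, however, that the paper itself gives no proof of this lemma --- it is imported verbatim as Lemma 2.2 of the cited Anderson--Jury paper \cite{AnJu} --- so there is no in-paper argument to compare against; your derivation is essentially the standard generating-function proof of the Bezoutian--Hankel congruence found in that literature.
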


\begin{theorem}\label{ThmInertiaHankelSRMPOriginal}
    Let $F(z)\in\mathbb C[z]^{p\times p}$ be monic of degree $n$. Suppose that $\mathscr S$ is
    the associated sequence of right (resp.\ left) Markov parameters for the case~$n=2m$ or
    right (resp.\ left) Markov parameters of second type for the case~$n=2m+1$. If~$\mathscr S$
    is a sequence of Hermitian matrices, then
  \begin{enumerate}
  \item[{\rm (i)}]in the even case when $n=2m$,
     \begin{align*}
      \gamma'_-(F)&=\pi(H_{m-1}^{\langle \mathscr S\rangle})+\pi({H}^{\langle \mathscr S\rangle}_{1,m-1}) +\gamma_+(\what F),\\
      \gamma'_+(F)&=\nu( H^{\langle \mathscr S\rangle}_{m-1})+\nu({H}^{\langle \mathscr S\rangle}_{1,m-1}) + \gamma_-(\what F),\\
      \gamma'_0(F)&=\delta(H_{m-1}^{\langle\mathscr S\rangle})+\delta({H}^{\langle \mathscr S\rangle}_{1,m-1})-\gamma_+(\what F)- \gamma_-(\what F),
    \end{align*}
  
\item[{\rm (ii)}]in the odd case when $n=2m+1$,
     \begin{align}
      \gamma'_-(F)&=\pi( H_{m}^{\langle\mathscr S\rangle})+\pi({H}^{\langle \mathscr S\rangle}_{1,m-1}) +\gamma_+(\what F),\label{gammaF-}\\
      \gamma'_+(F)&=\nu(H_{m}^{\langle\mathscr S\rangle})+\nu({H}^{\langle \mathscr S\rangle}_{1,m-1}) + \gamma_-(\what F),\label{gammaF+} \\
      \gamma'_0(F)&=\delta(H_{m}^{\langle\mathscr S\rangle})+\delta({H}^{\langle \mathscr S\rangle}_{1,m-1})-\gamma_+(\what F)- \gamma_-(\what F), \label{gammaF0}
    \end{align}
\end{enumerate}
where $\what F(z)$ is a GRCD (resp.\ GLCD) of~$F_{e}(-z^2)$ and~$z F_{o}(-z^2)$, and $F_e(z)$
and $F_o(z)$ are the even and odd parts of~$F(z)$, respectively.
\end{theorem}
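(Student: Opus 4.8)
The plan is to transfer the problem to Lemma~\ref{LemHermite-Fujiwara} applied to the rotated polynomial $L(z)\coloneqq F(\i z)$. Since multiplication by~$\i$ carries the open left (resp.\ right) half-plane onto the open upper (resp.\ lower) half-plane and the imaginary axis onto the real axis, one has $\gamma'_-(F)=\gamma_+(L)$, $\gamma'_+(F)=\gamma_-(L)$ and $\gamma'_0(F)=\gamma_0(L)$. Writing $L(z)=L_e(z^2)+zL_o(z^2)$, a short computation gives $L_e(w)=F_e(-w)$ and $L_o(w)=\i F_o(-w)$. The decisive step is the choice $L_1(z)\coloneqq L(-z)$, for which I must verify the hypothesis $L^{\vee}L=L_1^{\vee}L_1$ of Lemma~\ref{LemHermite-Fujiwara}. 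Expanding, $L^{\vee}(z)L(z)=E(z^2)+zO(z^2)$ with $O(w)=L_e^{\vee}(w)L_o(w)+L_o^{\vee}(w)L_e(w)$, and I expect the assumption ${\bf s}_k={\bf s}_k^{*}$ to be equivalent, via~\eqref{MakrovPE}, to the symmetry $F_o^{\vee}F_e=F_e^{\vee}F_o$, i.e.\ to $O\equiv0$. Then $L^{\vee}L=E(z^2)$ is even, whence $L_1^{\vee}(z)L_1(z)=L^{\vee}(-z)L(-z)=E(z^2)=L^{\vee}(z)L(z)$; as $L_1$ is clearly regular, Lemma~\ref{LemHermite-Fujiwara} applies. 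This is exactly where Hermiticity of~$\mathscr S$ enters, and it is what fails for the general matrix polynomials of the counterexamples above.

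Next I would identify the correction term. If $L_0$ is a GRCD of $L(z)$ and $L_1(z)=L(-z)$, then the right common divisors of $L(z)$ and $L(-z)$ are exactly those of $L_e(z^2)=\tfrac12(L(z)+L(-z))$ and $zL_o(z^2)=\tfrac12(L(z)-L(-z))$; since $F_e(-z^2)=L_e(z^2)$ and $zF_o(-z^2)$ differs from $zL_o(z^2)$ only by the scalar factor~$\i$, the polynomial $L_0$ is a GRCD of $F_e(-z^2)$ and $zF_o(-z^2)$, so $\gamma(L_0)=\gamma(\what F)$ (the inertia of a GRCD being well defined, cf.\ the appendix). It then remains to compute the inertia of the Hermitian matrix $B\coloneqq-\i\,\mathbf{B}_{L_1^{\vee},L^{\vee}}(L,L_1)$ and to match it with the two block-Hankel matrices.

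For the inertia of~$B$ I would run the classical even–odd separation. With $L_1=L(-z)$ the generating form of the Bezoutian is
\[
\frac{L^{\vee}(-z)L(-u)-L^{\vee}(z)L(u)}{z-u}
=\frac{-2\bigl(uL_e^{\vee}(z^2)L_o(u^2)+zL_o^{\vee}(z^2)L_e(u^2)\bigr)}{z-u},
\]
which is invariant under $(z,u)\mapsto(-z,-u)$; hence, after the block permutation~$P$ separating even- and odd-indexed rows and columns, $P^{*}BP=\operatorname{diag}(B_{\mathrm{ee}},B_{\mathrm{oo}})$. Writing $1/(z-u)=(z+u)/(z^2-u^2)$ and setting $\zeta=z^2$, $\eta=u^2$ exhibits $B_{\mathrm{oo}}$ and $B_{\mathrm{ee}}$ as Anderson–Jury Bezoutians in~$\zeta,\eta$ of the pairs $(L_e,L_o)$ and $(L_e,\zeta L_o)$. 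The Hermitian symmetry yields the two factorizations $L_oL_e^{-1}=-\i\sum_{k\ge0}\eta^{-(k+1)}{\bf s}_k$ and $(L_e^{\vee})^{-1}L_o^{\vee}=\i\sum_{k\ge0}\eta^{-(k+1)}{\bf s}_k$, so that Lemma~\ref{LemCongruenceBezoutHankel} represents, up to a positive scalar, $-\i B_{\mathrm{oo}}$ as $T^{*}H_{m-1}^{\langle\mathscr S\rangle}T$ and $-\i B_{\mathrm{ee}}$ as $\wtilde T^{*}H_{1,m-1}^{\langle\mathscr S\rangle}\wtilde T$, where $T,\wtilde T$ are the block anti-triangular matrices built from the coefficients of~$L_e$, invertible because $A_0=I_p$. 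As congruence preserves inertia, $\pi(B)=\pi(H_{m-1}^{\langle\mathscr S\rangle})+\pi(H_{1,m-1}^{\langle\mathscr S\rangle})$ and likewise for $\nu$ and~$\delta$; substituting into Lemma~\ref{LemHermite-Fujiwara} together with $\gamma'_\mp(F)=\gamma_\pm(L)$ and $\gamma(L_0)=\gamma(\what F)$ gives part~(i).

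The main obstacle is the bookkeeping in this last step. One has to check that the block $B_{\mathrm{ee}}$, in which numerator and denominator share the same degree, produces the \emph{shifted} matrix $H_{1,m-1}^{\langle\mathscr S\rangle}$ rather than $H_{m-1}^{\langle\mathscr S\rangle}$ — the constant term of $\zeta L_oL_e^{-1}$ dropping out of the Bezoutian — and that every scalar factor ($-\i$, $2$, $2\i$) and sign is absorbed into a genuine congruence so that inertia is preserved; this forces a mild extension of Lemma~\ref{LemCongruenceBezoutHankel} to the equal-degree situation and is where Hermiticity of~$\mathscr S$ is used a second time. Part~(ii) follows the same route: now $L(z)=F_e(-z^2)+\i zF_o(-z^2)$ has $\deg L_e=\deg L_o=m$ and the second-type expansion reads $L_eL_o^{-1}=-\i\sum_{k\ge0}\eta^{-k}{\bf s}_k$, so the even block has size $m+1$ and yields $H_m^{\langle\mathscr S\rangle}$ while the odd block keeps size~$m$ and yields $H_{1,m-1}^{\langle\mathscr S\rangle}$. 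Finally, the left-parameter case reduces to the right one through Remark~\ref{ReConnectionSLMPandSRMP}: by it, $({\bf s}_j)_j$ are the left Markov parameters of~$F$ iff $({\bf s}_j^{*})_j$ are the right Markov parameters of~$F^{\vee}$, and these are Hermitian and equal to $({\bf s}_j)_j$ precisely when~$\mathscr S$ is; moreover $\gamma'(F)=\gamma'(F^{\vee})$ since conjugating the zeros preserves their real parts, and the GRCD entering the result for~$F^{\vee}$ corresponds to the GLCD occurring in the statement for~$F$.
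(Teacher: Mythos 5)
Your proposal is correct and follows essentially the same route as the paper's proof: your choice $L_1(z)\coloneqq L(-z)$ is exactly the paper's $L_1=\what F_e-\i\,\what F_o$, and the argument likewise runs through Lemma~\ref{LemHermite-Fujiwara}, the unimodular identification of a GRCD of $L,L_1$ with $\what F$, and Lemma~\ref{LemCongruenceBezoutHankel} to reach a congruence with $\operatorname{diag}\bigl(H_{m-1}^{\langle\mathscr S\rangle},\,H_{1,m-1}^{\langle\mathscr S\rangle}\bigr)$ (you even verify the hypothesis $L^{\vee}L=L_1^{\vee}L_1$ via the equivalence of Hermiticity with $F_e^{\vee}F_o=F_o^{\vee}F_e$, which the paper leaves implicit). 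The only divergence is the order of operations: the paper applies Lemma~\ref{LemCongruenceBezoutHankel} once to the full Bezoutian $\mathbf B_{\what F_e^{\vee},-\what F_o^{\vee}}(\what F_e,-\what F_o)$ in the original variable, where $-\what F_o(z)\bigl(\what F_e(z)\bigr)^{-1}=\sum_{k\ge0}z^{-(2k+1)}{\bf s}_k$ satisfies the strict degree inequality, and only afterwards permutes the zero-interleaved Hankel matrix into the two blocks --- thereby avoiding the equal-degree extension of Lemma~\ref{LemCongruenceBezoutHankel} that your split-first arrangement in the squared variable requires (a fixable point, since the Hermitian constant term indeed drops out of the Bezoutian).
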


\begin{proof} 
    We only prove the case of right Markov parameters: one can easily obtain the corresponding
    results for left Markov parameters with the help of Remarks~\ref{ReConnectionSLMPandSRMP}
    and \ref{ProGRCDGLCD}.

The proof for {\rm (i)}:
Let
\begin{align*}
\what F_{e}(z)&\coloneqq F_{e}(-z^2),\\
\what F_{o}(z)&\coloneqq z F_{o}(-z^2),\\
L(z)&\coloneqq F({\rm i} z)=\what F_{e}(z)+{\rm i}\what F_{o}(z),\\
L_1(z)&\coloneqq\what F_{e}(z)-{\rm i}\what F_{o}(z).
\end{align*}
Clear that both~$L(z)$ and~$L_1(z)$ are regular. A combination of the fact that
\begin{equation*}
\begin{bmatrix}
L(z)\\ L_1(z)
\end{bmatrix}=\begin{bmatrix} I_p & {\rm i} I_p \\ I_p & -{\rm i} I_p \end{bmatrix} \begin{bmatrix} \what F_{e}(z)\\
\what F_{o}(z)
\end{bmatrix},
 \end{equation*}
and Proposition~\ref{ProGCDUnimodular} shows that~$\what F(z)$ is also a GRCD of~$L(z)$ and~$L_1(z)$.

In the even subcase~$n=2m$, the sequence~$\mathscr S$ consists of Hermitian matrices, and hence
both~${\mathbf B}_{L_1^{\vee},L^{\vee}}(L,L_1)$ and
${\mathbf B}_{\what F_{e}^{\vee},-\what F_{o}^{\vee}}(\what F_{e},-\what F_{o})$ are
well-defined. Moreover, \begin{equation}\label{EQBezoutianCombination}
    {\mathbf B}_{L_1^{\vee},L^{\vee}}(L,L_1) =2{\rm i} {\mathbf
        B}_{\what F_{e}^{\vee},-\what F_{o}^{\vee}}(\what F_{e},-\what F_{o}).
\end{equation}
As a result,
\begin{align}
\gamma'_-(F)
  &=\gamma_+(L)=\pi\big(-{\rm i} {\mathbf B}_{L_1^{\vee},L^{\vee}}(L,L_1)\big)
    +\gamma_+(\what F)\nonumber \\
  &=\pi\left(2{\mathbf B}_{\what F_{e}^{\vee},-\what F_{o}^{\vee}}(\what F_{e},-\what F_{o})\right)
    +\gamma_+(\what F) \label{Gamma-F}
\end{align}
where the 2nd equality is due to Lemma~\ref{LemHermite-Fujiwara} and the last is due
to~\eqref{EQBezoutianCombination}.

Assume that $\mathscr S\coloneqq({\bf s}_k)_{k=0}^{\infty}$.
Lemma~\ref{LemCongruenceBezoutHankel} shows that
${\mathbf B}_{\what F_{e}^{\vee},-\what F_{o}^{\vee}}(\what F_{e},-\what F_{o})$ is congruent to
the block Hankel matrix
\begin{equation*}
  \begin{bmatrix} {\bf s}_0 & 0_p & {\bf s}_1 & \cdots & {\bf s}_{m-1} & 0_p\\
        0_p & {\bf s}_1 & 0_p & \cdots &0_p & {\bf s}_{m} \\
        {\bf s}_1 & 0_p & {\bf s}_2 & \cdots & {\bf s}_m & 0_p\\
        \vdots & \vdots & \vdots && \vdots & \vdots\\
        {\bf s}_{m-1} & 0_p & {\bf s}_m & \cdots & {\bf s}_{2m-2} & 0_p\\
        0_p & {\bf s}_m & 0_p & \cdots & 0_p & {\bf s}_{2m-1}
\end{bmatrix}.
\end{equation*}
By reordering rows and columns, it is subsequently congruent to
\begin{equation*}
 \begin{bmatrix}
 H_{m-1}^{\langle\mathscr S\rangle} & \\
 & {H}^{\langle \mathscr S\rangle}_{1,m-1}
 \end{bmatrix}.
\end{equation*}
Hence, \eqref{Gamma-F} implies that
\begin{equation*}
 \gamma'_-(F)=\pi\big(H_{m-1}^{\langle\mathscr S\rangle}\big)
    +\pi\big({H}^{\langle \mathscr S\rangle}_{1,m-1}\big)
    +\gamma_+(\what F),
\end{equation*}
Analogously, we deduce
\begin{align*}
\gamma'_+(F)&=\nu(H_{m-1}^{\langle\mathscr S\rangle})+\nu({H}^{\langle \mathscr S\rangle}_{1,m-1}) + \gamma_-(\what F),\\
\gamma'_0(F)&=\delta(H_{m-1}^{\langle\mathscr S\rangle})+\delta({H}^{\langle \mathscr S\rangle}_{1,m-1})-\gamma_+(\what F)- \gamma_-(\what F),
\end{align*}
Therefore, Theorem~\ref{ThmInertiaHankelSRMPOriginal} is verified for the even case {\rm (i)}.

The odd case {\rm (ii)} follows in an analogous way. 
\end{proof}

\section{Main results}\label{Main}
The following three subsections are devoted to the identification of Hurwitz matrix polynomials
based on the inertia representation from Theorem \ref{ThmInertiaHankelSRMPOriginal}.

\subsection{Hurwitz stability and Stieltjes positive definiteness}
Let~$F(z)\in \mathbb C[z]^{p\times p}$ be monic of degree $n$ with the even part $F_e(z)$ and
odd part $F_o(z)$. Assume that $\mathscr S\coloneqq ({\bf s}_k)_{k=0}^{\infty}$ be the sequence
of right or left Markov parameters of $F(z)$ (of the second type for~$n=2m+1$). Then there
exists a one-to-one correspondence between the coefficients of $F(z)$, ~$\mathscr S$ and the
truncated sequence $({\bf s}_k)_{k=0}^{n-1}$. For simplicity, we only exhibit this correspondence
for the case when~$n=2m$ and~$\mathscr S$ is the sequence of right Markov parameters. Comparing
both sides of \eqref{MakrovPE} shows that
\begin{equation}\label{HC}
{H}^{\langle \mathscr S\rangle}_{j+k,m-1}={H}^{\langle \mathscr S\rangle}_{j,m-1}C^k,\quad j,k\in \N_0,
\end{equation}
and 
\begin{equation}\label{ASA}
\begin{bmatrix}
A_{2m-1}\\
A_{2m-3}\\
\vdots\\
A_1
\end{bmatrix}=\begin{bmatrix}
{\bf s}_0 & -{\bf s}_1 & \cdots & (-1)^{m-1}{\bf s}_{m-1}\\
 & {\bf s}_0  & \cdots & (-1)^{m-2}{\bf s}_{m-2}\\
  & & \ddots & \vdots\\
  & & & {\bf s}_0
\end{bmatrix}\begin{bmatrix}
A_{2m-2}\\
A_{2m-4}\\
\vdots\\
A_0
\end{bmatrix}
\end{equation}
where $F(z)$ is written as in~\eqref{ReprentationMatrixPolynomials} and
\begin{equation}\label{Cmatrix}
         {C}\coloneqq\begin{bmatrix}
            0_p & \cdots & 0_p &-A_{2m} \\
            I_p & \cdots & 0_p & A_{2m-2}\\
            \vdots & \ddots & \vdots & \vdots\\
            0_p & \cdots & I_p & (-1)^m A_{2}\\
        \end{bmatrix}
\end{equation}
is the so-called companion matrix of $F_e(-z)$.  The correspondence is accordingly obtained. 

What is more, it deserves special attention if the truncated sequence of Markov parameters
\begin{equation}\label{skH}
    ({\bf s}_k)_{k=0}^{n-1} \mbox{ is  a sequence of Hermitian matrices}.
\end{equation}
If so, ${\bf s}_k$ is also Hermitian for $k=n,n+1,\ldots$. Indeed, without loss of generality we
again consider the case of $n=2m$ and $\mathscr S$ being the sequence of right Markov
parameters. As an immediate consequence of \eqref{HC}, we have
\begin{equation}
    {H}^{\langle \mathscr
        S\rangle}_{2k+l,m-1}=\left(C^k\right)^*{H}_{l,m-1}^{\langle
        \mathscr S\rangle}C^k,\quad\text{for all}\quad k,l\in \N_0
    . \label{H2kCHC}
\end{equation}
The fact on~${\bf s}_k$ follows since the left-hand side of~\eqref{H2kCHC} is Hermitian. So,
these Hermitian matrices ${\bf s}_k$ are a natural extension of classical real Markov
parameters.

It is important for us that the condition~\eqref{skH} allows associating this
sequence $({\bf s}_k)_{k=0}^{n-1}$ with a truncated matricial Stieltjes moment problem and to
see more connection when $F(z)$ is a Hurwitz matrix polynomial.

Given a sequence of Hermitian matrices~$({\bf s}_k)_{k=0}^{n-1}$, truncated matricial Stieltjes
moment problem of first type (resp.\ second type) is to find all the nonnegative Hermitian $p\times p$
Borel measures $\tau$ on $[0,\infty)$ such that
\begin{multline*}
{\bf s}_i=\int_{[0,\infty)} u^{i} {\rm d} \tau( u),\quad i=0,\ldots, n-1\\
\Big(
\mbox{resp}.\quad {\bf s}_i=\int_{[0,\infty)} u^{i} {\rm d} \tau( u), \quad i=0,\ldots, n-2,\quad {\bf s}_{n-1}-\int_{[0,\infty)} u^{n-1} {\rm d} \tau( u)\succeq 0\Big).
\end{multline*}

For the detailed study of these matrix moment problems, we refer the reader to
\cite{AdTk1,AdTk2,An70,Bol,CHStieltjes,DFKM,HC}. Recall that the well-known solvability criteria for these
problems (see e.g., Lemma 1.7 of \cite{Bol}, Lemma 1.2 of \cite{CHStieltjes}, Theorem 1.4 of
\cite{DFKM}, Theorem 1.1 of \cite{HC}) are related to some special matrix sequences as follows:

\begin{definition}\label{DefHankel}
 Let $l\in \N_0$ and let $({\bf s}_k)_{k=0}^l$ be a finite  sequence of $p\times p$ Hermitian matrices.
 \begin{enumerate}
 \item[{\rm (i)}]  $({\bf s}_k)_{k=0}^l$ is called \textit{Stieltjes positive definite} (resp.\ \textit{Stieltjes nonnegative definite}) if both block Hankel matrices $[{\bf s}_{j+k}]_{j,k=0}^{\lfloor\frac{l}2\rfloor}$ and $[{\bf s}_{j+k+1}]_{j,k=0}^{\lfloor\frac{l-1}2\rfloor}$  are positive definite (resp.\ nonnegative definite).  
 \item[{\rm (ii)}] $({\bf s}_j)_{j=0}^l$ is called \textit{Stieltjes nonnegative definite extendable} if there exists a complex $p\times p$ matrix ${\bf s}_{l+1}$  such that $({\bf s}_j)^{l+1}_{j=0}$ is Stieltjes nonnegative definite.
 \end{enumerate}
 \end{definition}

\begin{theorem}\label{ThmStieltjesMomentProblemEqual}
    Let $l\in \N_0$ and let $({\bf s}_k)_{k=0}^l$ be a finite sequence of $p\times p$ Hermitian
    matrices. Then~$({\bf s}_k)_{k=0}^l$ corresponds to a solvable truncated matricial Stieltjes
    moment problem of first type if and only if $({\bf s}_k)_{k=0}^l$ is Stieltjes nonnegative
    definite extendable.
\end{theorem}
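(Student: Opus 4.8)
The statement is a reformulation, in the terminology of Definition~\ref{DefHankel}, of the classical solvability criterion for the truncated matricial Stieltjes moment problem, so the plan is to prove the two implications separately and to match each with the known results recalled from \cite{Bol,CHStieltjes,DFKM,HC}.

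For the necessity, I would start from a solution of the first-type problem, i.e.\ a nonnegative Hermitian $p\times p$ measure $\tau$ on $[0,\infty)$ with ${\bf s}_i=\int_{[0,\infty)} u^i\,{\rm d}\tau(u)$ for $i=0,\ldots,l$. Since a generic solution need not possess a finite $(l+1)$-th moment, the first step is to pass to a \emph{finitely atomic} (molecular) solution $\tau_0=\sum_j M_j\delta_{u_j}$ with $u_j\ge 0$ and $M_j\succeq 0$, whose existence for a solvable truncated problem is standard. All moments of $\tau_0$ are then finite, and I set ${\bf s}_{l+1}\coloneqq\int_{[0,\infty)} u^{l+1}\,{\rm d}\tau_0(u)$. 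For any block column $(X_0,\ldots,X_r)^{\rm T}$ of $p\times p$ matrices,
\[
 \sum_{j,k} X_j^*\Big(\int_{[0,\infty)} u^{j+k}\,{\rm d}\tau_0\Big)X_k
 =\int_{[0,\infty)}\Big(\sum_j u^j X_j\Big)^{\!*}{\rm d}\tau_0\,\Big(\sum_k u^k X_k\Big)\succeq 0,
\]
so $[{\bf s}_{j+k}]_{j,k=0}^{\lfloor (l+1)/2\rfloor}\succeq 0$; applying the same computation to the measure $u\,{\rm d}\tau_0$, which is again nonnegative on $[0,\infty)$, yields $[{\bf s}_{j+k+1}]_{j,k=0}^{\lfloor l/2\rfloor}\succeq 0$. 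Hence $({\bf s}_k)_{k=0}^{l+1}$ is Stieltjes nonnegative definite, that is, $({\bf s}_k)_{k=0}^l$ is Stieltjes nonnegative definite extendable.

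For the sufficiency, I would take the extending matrix ${\bf s}_{l+1}$ making $({\bf s}_k)_{k=0}^{l+1}$ Stieltjes nonnegative definite and aim to build a representing measure on $[0,\infty)$ for the first $l+1$ moments. The route I would follow is to extend $({\bf s}_k)_{k=0}^{l+1}$ to an \emph{infinite} Stieltjes nonnegative definite sequence $({\bf s}_k)_{k=0}^{\infty}$ — via the canonical (degenerate) continuation of Hankel nonnegative definite sequences from \cite{DFKM} — and then to invoke the solvability criterion for the \emph{full} matricial Stieltjes moment problem, which furnishes a nonnegative measure $\tau$ on $[0,\infty)$ with ${\bf s}_k=\int u^k\,{\rm d}\tau$ for every $k\in\N_0$; in particular $\tau$ solves the truncated first-type problem. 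Alternatively, the conclusion can be read off directly from the truncated criterion in \cite[Theorem~1.4]{DFKM} or \cite[Theorem~1.1]{HC}, once one verifies that the matrix-sequence conditions stated there coincide with Definition~\ref{DefHankel}.

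The main obstacle is the sufficiency direction. Two points need care: the extension of a once-extendable Stieltjes nonnegative definite sequence to an infinite such sequence is not automatic and rests on the parametrization of Hankel nonnegative definite extensions, and the block-Hankel bookkeeping must be tracked through the parity split $l=2n$ versus $l=2n+1$. The floor-indices in Definition~\ref{DefHankel} are arranged precisely so that adjoining ${\bf s}_{l+1}$ completes the missing member of the symmetric pair of block Hankel matrices — the shifted block $[{\bf s}_{j+k+1}]$ when $l$ is even, and the unshifted block $[{\bf s}_{j+k}]$ when $l$ is odd. Once these are verified, the two implications combine to yield the asserted equivalence.
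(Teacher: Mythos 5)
Your necessity argument is sound: passing to a molecular (finitely atomic) solution to guarantee a finite $(l+1)$-th moment, defining ${\bf s}_{l+1}$ as that moment, and checking nonnegativity of both block Hankel forms via $\tau_0$ and $u\,{\rm d}\tau_0$ is a complete proof of that direction, and your parity bookkeeping for the floor indices is correct. Be aware, though, that the paper itself offers no proof of this theorem: it is recalled as a known solvability criterion with pointers to Lemma~1.7 of \cite{Bol}, Lemma~1.2 of \cite{CHStieltjes}, Theorem~1.4 of \cite{DFKM} and Theorem~1.1 of \cite{HC}, so your ``alternative'' route --- reading the statement off those references after matching definitions --- is precisely the paper's entire treatment.

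The main route you propose for sufficiency has a genuine gap. You take the given extension ${\bf s}_{l+1}$ and claim that $({\bf s}_k)_{k=0}^{l+1}$ can be continued to an \emph{infinite} Stieltjes nonnegative definite sequence by a canonical continuation. This fails in general, because one-step extendability does not iterate for an arbitrary admissible choice of ${\bf s}_{l+1}$: already for $p=1$ and $l=1$, the sequence $(0,0,1)$ is Stieltjes nonnegative definite (the blocks are $\bigl[\begin{smallmatrix}0&0\\0&1\end{smallmatrix}\bigr]\succeq 0$ and $[0]\succeq 0$), yet it admits no Stieltjes nonnegative definite continuation, since $\bigl[\begin{smallmatrix}0&1\\1&s_3\end{smallmatrix}\bigr]\succeq 0$ is impossible for any $s_3$; the ``canonical (degenerate) continuation'' you invoke simply does not exist for this extension, even though the underlying truncated problem for $(0,0)$ is solvable (by $\tau=0$). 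The clean repair needs no infinite continuation at all: apply Theorem~\ref{ThmStieltjesMomentProblemLeq} to the \emph{extended} sequence. Since $({\bf s}_k)_{k=0}^{l+1}$ is Stieltjes nonnegative definite, the second-type problem for it is solvable, and by its very definition a second-type solution satisfies ${\bf s}_i=\int_{[0,\infty)}u^i\,{\rm d}\tau(u)$ with \emph{equality} for $i=0,\ldots,l$, the inequality being imposed only at index $l+1$; discarding that surplus condition, $\tau$ is already a first-type solution for $({\bf s}_k)_{k=0}^{l}$. With this substitution your two halves combine into a correct and essentially self-contained proof, modulo the molecular-solution fact and Theorem~\ref{ThmStieltjesMomentProblemLeq} --- which is more than the paper itself provides.
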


\begin{theorem}\label{ThmStieltjesMomentProblemLeq}
    Let $l\in \N_0$ and let $({\bf s}_k)_{k=0}^l$ be a finite sequence of $p\times p$ Hermitian
    matrices. Then~$({\bf s}_k)_{k=0}^l$ corresponds to a solvable truncated matricial Stieltjes
    moment problem of second type if and only if $({\bf s}_k)_{k=0}^l$ is Stieltjes nonnegative
    definite.
\end{theorem}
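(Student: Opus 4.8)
This restates a known solvability criterion for the truncated matricial Stieltjes moment problem of second type, so the quickest route is to identify it, up to notation, with \cite[Theorem~1.1]{HC} or \cite[Theorem~1.4]{DFKM}. For a self-contained argument I would treat the two implications separately, the necessity being the routine one. If $\tau$ solves the problem then ${\bf s}_k=\int_{[0,\infty)}u^{k}\,{\rm d}\tau(u)$ for $k\le l-1$ and ${\bf s}_l-\int_{[0,\infty)}u^{l}\,{\rm d}\tau(u)\succeq 0$; hence, for any block column $X=(X_{0},\dots,X_{\mu})$ with $X_{j}\in\mathbb C^{p}$ and $\mu=\lfloor l/2\rfloor$,
\[
    X^{*}\big[{\bf s}_{j+k}\big]_{j,k=0}^{\mu}X
    =\int_{[0,\infty)}\Big(\sum_{j}u^{j}X_{j}\Big)^{*}{\rm d}\tau(u)\Big(\sum_{k}u^{k}X_{k}\Big)
      +X_{\mu}^{*}\Big({\bf s}_{l}-\int_{[0,\infty)}u^{l}\,{\rm d}\tau(u)\Big)X_{\mu},
\]
where the last term occurs only when $l$ is even. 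Both summands are nonnegative, so $[{\bf s}_{j+k}]_{j,k=0}^{\lfloor l/2\rfloor}\succeq 0$; running the same computation with the (still nonnegative) measure $u\,{\rm d}\tau(u)$, the analogous corner correction now occurring for odd $l$, gives $[{\bf s}_{j+k+1}]_{j,k=0}^{\lfloor(l-1)/2\rfloor}\succeq 0$. Thus $({\bf s}_k)_{k=0}^{l}$ is Stieltjes nonnegative definite.

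For the converse I would reduce to the first-type problem via Theorem~\ref{ThmStieltjesMomentProblemEqual}. If $({\bf s}_k)_{k=0}^{l}$ is Stieltjes nonnegative definite then ${\bf s}_l$ itself witnesses that $({\bf s}_k)_{k=0}^{l-1}$ is Stieltjes nonnegative definite extendable, so by Theorem~\ref{ThmStieltjesMomentProblemEqual} the first-type problem for $({\bf s}_k)_{k=0}^{l-1}$ has a solution. It remains to pick such a solution whose $l$-th moment is $\preceq{\bf s}_l$ in the L\"owner order: the values of $\int_{[0,\infty)}u^{l}\,{\rm d}\tau(u)$ attained by the representing measures of $({\bf s}_k)_{k=0}^{l-1}$ form a set with a L\"owner-least element (the lower principal representation), and nonnegativity of the block Hankel containing ${\bf s}_l$ is precisely the statement that this least value does not exceed ${\bf s}_l$. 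Any measure attaining it then solves the second-type problem.

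Alternatively one can argue intrinsically. The two Hankel conditions say exactly that the Riesz functional $\Lambda(u^{k})={\bf s}_k$ is nonnegative on the cone of matrix polynomials of the shape $P(u)^{*}P(u)+u\,Q(u)^{*}Q(u)$, which by the matrix-valued Markov--Luk\'acs description is the cone of matrix polynomials nonnegative on $[0,\infty)$; a matricial Riesz--Haviland theorem then produces a representing measure on $[0,\infty)$. Concretely I would realise this operator-theoretically: endow $\mathbb C^{p(\mu+1)}$ with the (possibly degenerate) inner product induced by $[{\bf s}_{j+k}]_{j,k=0}^{\mu}$, pass to the quotient Hilbert space, and let multiplication by $u$ act as the block shift; nonnegativity of the shifted Hankel makes this operator symmetric and nonnegative, a nonnegative self-adjoint (Krein) extension has its spectral measure supported on $[0,\infty)$, and compressing to the cyclic vector $I_p$ delivers $\tau$.

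The main obstacle is this passage from finite data to an actual measure. Constructing a nonnegative self-adjoint extension of the multiplication operator that reproduces the moments up to the correct order --- and, crucially, returns the top moment only as the inequality ${\bf s}_l\succeq\int_{[0,\infty)}u^{l}\,{\rm d}\tau(u)$ rather than an equality --- is the technical heart; it is exactly this slack in the top moment that separates the second type from the first and makes bare Stieltjes nonnegative definiteness, without extendability, sufficient. The matrix-valued Markov--Luk\'acs factorisation and the control of the lower principal representation are where the real work sits, and for those I would rely on the moment-problem machinery of \cite{Bol,CHStieltjes,DFKM,HC}.
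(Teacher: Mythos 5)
The paper gives no proof of this theorem: it is stated as a recollection of well-known solvability criteria with exactly the citations you name (Lemma~1.7 of \cite{Bol}, Lemma~1.2 of \cite{CHStieltjes}, Theorem~1.4 of \cite{DFKM}, Theorem~1.1 of \cite{HC}), so your primary route --- identifying it with those results --- coincides with the paper's. Your supplementary sketch is sound where it is explicit (the necessity computation, with the corner correction correctly placed by the parity of $l$, is complete, and the reduction of sufficiency to Theorem~\ref{ThmStieltjesMomentProblemEqual} plus a L\"owner-minimal $l$-th moment is the standard mechanism), and you rightly flag that the minimal-moment/nonnegative-extension step is the technical heart, deferring it to the same references the paper cites.
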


Given a Stieltjes positive definite sequence $({\bf s}_k)_{k=0}^l$, the related truncated
matricial Stieltjes moment problem is solvable with an infinite number of solutions. Via the
Dyukarev–Stieltjes parameters of $({\bf s}_k)_{k=0}^l$, the so-called Dyukarev matrix
polynomials form the resolvent matrix for this problem (see \cite[Theorem 7]{Dyu}). In what
follows, we link Stieltjes positive definite sequences with Hurwitz stability of matrix
polynomials.

\begin{theorem}\label{ThmHurwitzMatrixHurwitzSRMP}
    Let $F(z)\in\mathbb C[z]^{p\times p}$ be monic of degree $n$. Assume that $\mathscr S$ is
    the related truncated Hermitian sequence of right Markov parameters (of second type when
    $n=2m+1$). Then $F(z)$ is a Hurwitz matrix polynomial if and only if $\mathscr S$ is
    Stieltjes positive definite.
\end{theorem}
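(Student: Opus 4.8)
The plan is to read off both implications from the inertia representation of Theorem~\ref{ThmInertiaHankelSRMPOriginal}, using the fact recorded above that a regular matrix polynomial $F(z)$ of degree $n$ is Hurwitz stable precisely when $\gamma'_-(F)=np$. The first step is to note that the block Hankel matrices occurring in Theorem~\ref{ThmInertiaHankelSRMPOriginal} are exactly those entering Definition~\ref{DefHankel}. In the even case $n=2m$ the truncated sequence is $({\bf s}_k)_{k=0}^{2m-1}$, so $\lfloor(2m-1)/2\rfloor=\lfloor(2m-2)/2\rfloor=m-1$ and Stieltjes positive definiteness of $\mathscr S$ means exactly $H_{m-1}^{\langle\mathscr S\rangle}\succ0$ and $H_{1,m-1}^{\langle\mathscr S\rangle}\succ0$; in the odd case $n=2m+1$ with second-type parameters the truncation is $({\bf s}_k)_{k=0}^{2m}$, giving the pair $H_{m}^{\langle\mathscr S\rangle}\succ0$ and $H_{1,m-1}^{\langle\mathscr S\rangle}\succ0$. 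Since the two cases run identically, I would write the details for $n=2m$ and indicate that the odd case is verbatim with $H_{m-1}^{\langle\mathscr S\rangle}$ replaced by $H_{m}^{\langle\mathscr S\rangle}$ and the dimension counts adjusted.

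For sufficiency, suppose $\mathscr S$ is Stieltjes positive definite. Then each of the two $mp\times mp$ Hankel matrices has $\pi=mp$ and $\nu=\delta=0$. Substituting into the formula for $\gamma'_0(F)$ in Theorem~\ref{ThmInertiaHankelSRMPOriginal} gives $\gamma'_0(F)=-\gamma_+(\what F)-\gamma_-(\what F)$; as the left-hand side is nonnegative while the right-hand side is nonpositive, both sides vanish, so $\gamma_+(\what F)=\gamma_-(\what F)=0$. The formula for $\gamma'_-(F)$ then yields $\gamma'_-(F)=mp+mp+0=2mp=np$, whence $F$ is Hurwitz. Note that no separate analysis of $\what F$ is required here: the sign constraint $\gamma'_0(F)\geq0$ alone disposes of the correction terms.

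For necessity, the crux is to show that the GRCD $\what F$ of $F_e(-z^2)$ and $zF_o(-z^2)$ is unimodular when $F$ is Hurwitz. I would reuse the two composite polynomials from the proof of Theorem~\ref{ThmInertiaHankelSRMPOriginal}, namely $L(z)=F(\i z)=\what F_e(z)+\i\what F_o(z)$ and $L_1(z)=\what F_e(z)-\i\what F_o(z)$, and observe that $L_1(z)=F(-\i z)$. A zero of $\det L$ has the form $-\i w$ with $w\in\sigma(F)$, hence lies in the open upper half-plane, while a zero of $\det L_1$ has the form $\i w$, hence lies in the open lower half-plane; the spectra of $L$ and $L_1$ are therefore disjoint. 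Since $\what F$ is a right divisor of both $L$ and $L_1$ — indeed a GRCD of the pair, as established within the proof of Theorem~\ref{ThmInertiaHankelSRMPOriginal} — any zero of $\det\what F$ would be a common zero of $\det L$ and $\det L_1$, which is impossible; thus $\det\what F$ is a nonzero constant, $\what F$ is unimodular, and $\gamma_+(\what F)=\gamma_-(\what F)=0$. With these correction terms gone, the Hurwitz conditions $\gamma'_+(F)=0$ and $\gamma'_0(F)=0$ force, through Theorem~\ref{ThmInertiaHankelSRMPOriginal}, first $\nu(H_{m-1}^{\langle\mathscr S\rangle})=\nu(H_{1,m-1}^{\langle\mathscr S\rangle})=0$ and then $\delta(H_{m-1}^{\langle\mathscr S\rangle})=\delta(H_{1,m-1}^{\langle\mathscr S\rangle})=0$; hence both Hankel matrices are positive definite and $\mathscr S$ is Stieltjes positive definite.

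The main obstacle is precisely this unimodularity step in the necessity direction: translating the statement that all zeros of $F$ lie in the open left half-plane into coprimality of $L$ and $L_1$, and thence into triviality of $\what F$. The identification $L_1(z)=F(-\i z)$ and the resulting separation of the spectra of $L$ and $L_1$ into opposite open half-planes is the pivot that makes this work; everything else amounts to bookkeeping with the inertia identities of Theorem~\ref{ThmInertiaHankelSRMPOriginal} together with the additivity of $\pi+\nu+\delta$ to the matrix size.
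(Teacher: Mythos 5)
Your proposal is correct and takes essentially the same route as the paper: both directions are read off from the inertia representation of Theorem~\ref{ThmInertiaHankelSRMPOriginal}, and your pivotal necessity step --- identifying $L_1(z)=F(-\i z)$ so that any zero $z_0$ of $\det\what F$ would put both $\i z_0$ and $-\i z_0$ into $\sigma(F)$, forcing $\what F$ to be unimodular --- is exactly the paper's argument. The only difference is immaterial bookkeeping: the paper's sufficiency uses the sandwich $np\ge\gamma'_-(F)\ge\pi\bigl(H^{\langle\mathscr S\rangle}_{m-1}\bigr)+\pi\bigl(H^{\langle\mathscr S\rangle}_{1,m-1}\bigr)=np$ directly, whereas you first eliminate $\gamma_\pm(\what F)$ via the $\gamma'_0$ identity before computing $\gamma'_-(F)$, and in necessity you use the $\gamma'_+$ and $\gamma'_0$ identities where the paper uses the $\gamma'_-$ one.
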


\begin{proof}
 
    The proof for ``if'' implication: Assume that $ \mathscr S$ is a Stieltjes positive definite
    matrix sequence. Accordingly, Lemma \ref{ThmInertiaHankelSRMPOriginal} yields that
\begin{equation*}
np=\deg\det F(z)\geq \gamma'_-(F)\geq \pi(H_{m-1}^{\langle \mathscr S\rangle})+ \pi({H}^{\langle \mathscr S\rangle}_{1,m-1})=np,
\end{equation*}
for the even case and
\begin{equation*}
np=\deg\det F(z)\geq \gamma'_-(F)\geq \pi(H_{m}^{\langle \mathscr S\rangle})+ \pi({H}^{\langle \mathscr S\rangle}_{1,m-1})=np
\end{equation*}
for the odd case. Thus $\gamma'_-(F)=np$, which means that $F(z)$ is a Hurwitz matrix
polynomial.

The proof for ``only if'' implication: Let $F(z)$ be a Hurwitz matrix polynomial with the
sequence of right Markov parameters $\mathscr S$. Then $\gamma'_{-}(F)=np$. Denote by
$\what F(z)$ a GRCD of $F_{e}(-z^2)$ and $z F_{o}(-z^2)$, where $F_{e}(z)$ and $F_o(z)$ are the
even part and odd part of $F(z)$, respectively. Suppose that~$\sigma(\what F)\ne\varnothing$.
One can see from Proposition \ref{ProGCDUnimodular} that $\what F(z)$ is a GRCD of
\[
    F({\rm i} z)=F_{e}(-z^2) +{\rm i} z F_{o}(-z^2)
    \quad\text{and}\quad F(-{\rm i} z)=F_{e}(-z^2) +{\rm i} z F_{o}(-z^2).
\]
Then for each zero $z_0\in\sigma(\what F)$, we have~${\rm i} z_0\in \sigma(F)$
and~$-{\rm i} z_0\in \sigma(F)$, which contradicts the assumption that~$F(z)$ is a Hurwitz
matrix polynomial. Consequently, Lemma~\ref{ThmInertiaHankelSRMPOriginal} yields the expressions
\begin{equation*}
np= \pi(H_{m-1}^{\langle\mathscr S\rangle})+ \pi({H}^{\langle \mathscr S\rangle}_{1,m-1})
\end{equation*}
for the even case and
\begin{equation*}
np= \pi(H_{m}^{\langle \mathscr S\rangle})+ \pi({H}^{\langle \mathscr S\rangle}_{1,m-1})
\end{equation*}
for the odd case,
from which the statement (ii) follows.

\end{proof}

\begin{remark}\label{RemarkHurwitzSLMP}
    As follows from Remark \ref{ReConnectionSLMPandSRMP}, if the right Markov parameters in
    Theorem~\ref{ThmHurwitzMatrixHurwitzSRMP} are substituted by the left Markov parameters, the
    corresponding results hold true as well.
\end{remark}

We provide a few examples of testing the Hurwitz stability for matrix polynomials via Theorem
\ref{ThmHurwitzMatrixHurwitzSRMP}.

\begin{example} Let $F(z)\in \mathbb C[z]^{2\times 2}$ of degree $3$ be given as
\begin{multline*}
F(z)\coloneqq \begin{bmatrix}1 & 0\\ 0 &1\end{bmatrix}z^3+\begin{bmatrix}3 & 4\\ 4 &8\end{bmatrix}z^2+\begin{bmatrix}
23-15{\rm i} & 33+35{\rm i}\\ 12-10{\rm i} & 17+15{\rm i}\end{bmatrix}z\\
+\begin{bmatrix} 115-85{\rm i} & 170+165{\rm i}\\ 191-140{\rm i} & 261+260 {\rm i}
\end{bmatrix}.
\end{multline*}
The related right Markov parameters are
\begin{equation*}
{\bf s}_{0}=\begin{bmatrix}3 & 4\\ 4 &8\end{bmatrix},\quad {\bf s}_1=\begin{bmatrix}2 & -3 \\-3 &7\end{bmatrix},\quad {\bf s}_2=\begin{bmatrix}10 & 15+25 {\rm i}\\ 15-25{\rm i} &20\end{bmatrix}.
\end{equation*}
They satisfy ${\bf s}_{0}\succ 0$ and ${\bf s}_1\succ 0$, however
\begin{equation*}
{\bf s}_2-{\bf s}_1{\bf s}_0^{-1} {\bf s}_1=\begin{bmatrix}-\frac{27}{8} & \frac{323}{8}+25{\rm i} \\\frac{323}{8}-25{\rm i} & -\frac{227}{8}\end{bmatrix}
\end{equation*}
is not positive definite. So $F(z)$ cannot be a Hurwitz matrix polynomial.
\end{example}

\begin{example}
Let $F(z)\in \mathbb C[z]^{3\times 3}$ of degree $3$ be given as
\begin{multline*}
    F(z)\coloneqq
    \begin{bmatrix}1 & 0 & 0\\ 0 &1 & 0 \\ 0 & 0 & 1\end{bmatrix}z^3
    +
    \begin{bmatrix}2 & -1-{\rm i} & {\rm i}\\-1+ {\rm i} &2 & -1\\ -{\rm i} & -1 & 2 \end{bmatrix}z^2\\
    +
    \begin{bmatrix}
        65+3{\rm i} & -6 & 70{\rm i} \\
        -1+30{\rm i} & 5-3{\rm i} & -35-3{\rm i} \\
        2-20{\rm i} & 4{\rm i} & 40 
    \end{bmatrix} z
    +
    \begin{bmatrix}
        180-21{\rm i} & -24-3{\rm i} & 32+219{\rm i} \\
        -72+143{\rm i} & 14-16{\rm i} & -180-76{\rm i} \\
        8-136{\rm i} & -5+17{\rm i} & 182+3{\rm i}
    \end{bmatrix}.
\end{multline*}
The related right Markov parameters are
\begin{align*}
&{\bf s}_{0}=\begin{bmatrix}2 & -1-{\rm i} & {\rm i}\\-1+ {\rm i} &2 & -1\\ -{\rm i} & -1 & 2 \end{bmatrix},
\qquad {\bf s}_1=\begin{bmatrix}1 & {\rm i} & -{\rm i}\\- {\rm i} &2 & 0\\ {\rm i} & 0 & 3 \end{bmatrix},\\
& {\bf s}_2=\begin{bmatrix}15 & 1+5 {\rm i} & 3-5{\rm i}\\ 1-5{\rm i} & 10 & -6{\rm i}\\ 3+5{\rm
        i} &6{\rm i} & 50\end{bmatrix}
                       .
\end{align*}
They satisfy ${\bf s}_{0}\succ 0$, ~${\bf s}_1\succ 0$ and \begin{equation*}
{\bf s}_2-{\bf s}_1{\bf s}_0^{-1} {\bf s}_1=\begin{bmatrix}10 &-\frac{3}{2}-\frac{1}{2}{\rm i} & -\frac{1}{2}+\frac{1}{2}{\rm i} \\[2pt] -\frac{3}{2}+\frac{1}{2}{\rm i} & \frac{1}{2} & 1+\frac{1}{2}{\rm i} \\[2pt]
-\frac{1}{2}-\frac{1}{2}{\rm i} & 1-\frac{1}{2}{\rm i} & \frac{73}{2}\end{bmatrix} \succ 0.
\end{equation*}
So $F(z)$ is a Hurwitz matrix polynomial.
\end{example}

\begin{example} Let $F(z)\in \mathbb C[z]^{2\times 2}$ of degree $4$ be given as
\begin{align*}
    F(z)\coloneqq &
    \begin{bmatrix}1 & 0\\ 0 &1
    \end{bmatrix}z^4
    +
    \begin{bmatrix}
        2 & 2-{\rm i}\\
        2+{\rm i} &3
    \end{bmatrix}z^3
    +
    \begin{bmatrix}
        -58 & 5+39{\rm i}\\
        9-71{\rm i} & -67
    \end{bmatrix}z^2
    \\
    &+
    \begin{bmatrix}
        -143+83{\rm i} & -100+176{\rm i} \\
        -115-210{\rm i} & -251-151{\rm i}
    \end{bmatrix}z
    +
    \begin{bmatrix}
        23+{\rm i} & -2-17{\rm i}\\
        1+39{\rm i} & 20-5{\rm i}
    \end{bmatrix}
    .
\end{align*}
The related left Markov parameters are
\begin{multline*}
\mathscr S=\left(\begin{bmatrix} 2& 2-{\rm i}\\
2+{\rm i} & 3\end{bmatrix},\
\begin{bmatrix} -2 & -1-{\rm i} \\
-1+{\rm i} &-3\end{bmatrix},\right.
\\
\left. \begin{bmatrix} 13 & 2+13{\rm i} \\
 2-13{\rm i} &20\end{bmatrix}, \
  \begin{bmatrix} -210 & -{\rm i} \\
  {\rm i} & -377\end{bmatrix},\cdots\right).
\end{multline*}
It turns out that ${H}_{1}^{\langle \mathscr S\rangle}$ is positive definite, but
${H}_{1,1}^{\langle \mathscr S\rangle}$ is not positive definite. So $F(z)$ is not a Hurwitz matrix
polynomial.
\end{example}

\subsection{Matricial versions of  the stability criteria}

Theorem \ref{ThmHurwitzMatrixHurwitzSRMP} for scalar polynomials of even degrees coincides with
the stability criterion via Markov parameters. Now, suppose
that~$F(z)\in\mathbb C[z]^{p\times p}$ is a monic matrix polynomial of odd degree~$2m+1$ whose
even part is regular, and whose sequence of right Markov parameters of first type
is~$({\bf s}_k)_{k=-1}^{\infty}$. Analogous to the even case, we pay special attention to the
case that the truncated sequence of Markov parameters
\begin{equation}\label{sk-12m-1}
    ({\bf s}_k)_{k=-1}^{2m-1} \mbox{ is  a sequence of Hermitian matrices}.
\end{equation}
 We formulate the matrix analogue of the stability
criterion via Markov parameters in a unified way covering both even and odd cases.

\begin{theorem}\label{ThmHurwitzMatrixHurwitzOdd}
    Suppose that $F(z)\in\mathbb C[z]^{p\times p}$ is a monic matrix polynomial of degree~$n=2m$
    or $n=2m+1$, and that~$F_e(z)$ is regular for $n=2m+1$. Let~$\mathscr S$ be the related
    truncated Hermitian sequence of right Markov parameters (of first type when $n=2m+1$).
    Then~$F(z)$ is Hurwitz-stable if and only
    if~${H}^{\langle {\mathscr S}\rangle}_{m-1},\,
    {H}^{\langle {\mathscr S}\rangle}_{1,m-1}\succ 0$ and, for~$n=2m+1$, additionally
    ${\bf s}_{-1}\succ 0$, where ${\bf s}_{-1}$ is the first element of $\mathscr S$.
\end{theorem}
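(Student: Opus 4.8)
The plan is to separate the even case~$n=2m$, which reduces immediately to Theorem~\ref{ThmHurwitzMatrixHurwitzSRMP}, from the odd case~$n=2m+1$, for which I would establish a first-type analogue of the inertia representation of Theorem~\ref{ThmInertiaHankelSRMPOriginal} and then close by the same ``squeeze'' argument used in Theorem~\ref{ThmHurwitzMatrixHurwitzSRMP}.

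For the even case there is nothing to do beyond unwinding definitions: when $n=2m$ the first- and second-type parameters coincide with the single sequence $\mathscr S=({\bf s}_k)_{k=0}^{2m-1}$ and there is no entry ${\bf s}_{-1}$. By Definition~\ref{DefHankel}~(i) with $l=2m-1$ we have $\lfloor l/2\rfloor=\lfloor(l-1)/2\rfloor=m-1$, so Stieltjes positive definiteness of $\mathscr S$ is \emph{verbatim} the pair of conditions ${H}^{\langle\mathscr S\rangle}_{m-1}\succ0$ and ${H}^{\langle\mathscr S\rangle}_{1,m-1}\succ0$. Theorem~\ref{ThmHurwitzMatrixHurwitzSRMP} then supplies the equivalence with Hurwitz stability.

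For the odd case the parameters come from $G_1(z)=F_o(z)(F_e(z))^{-1}={\bf s}_{-1}+\sum_{k\ge0}(-1)^kz^{-(k+1)}{\bf s}_k$, which, in contrast to the even case, is \emph{not} strictly proper because $\deg F_o=\deg F_e=m$. I would repeat the construction from the proof of Theorem~\ref{ThmInertiaHankelSRMPOriginal}: set $\what F_e(z)\coloneqq F_e(-z^2)$, $\what F_o(z)\coloneqq zF_o(-z^2)$, $L(z)\coloneqq F({\rm i}z)=\what F_e(z)+{\rm i}\what F_o(z)$ and $L_1(z)\coloneqq \what F_e(z)-{\rm i}\what F_o(z)$, so that $\what F$ is a GRCD of $L$ and $L_1$ by Proposition~\ref{ProGCDUnimodular}, and Lemma~\ref{LemHermite-Fujiwara} gives $\gamma'_-(F)=\gamma_+(L)=\pi\big(-{\rm i}\,{\mathbf B}_{L_1^\vee,L^\vee}(L,L_1)\big)+\gamma_+(\what F)$ with $-{\rm i}\,{\mathbf B}_{L_1^\vee,L^\vee}(L,L_1)=2\,{\mathbf B}_{\what F_e^\vee,-\what F_o^\vee}(\what F_e,-\what F_o)$. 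The crux, and the step I expect to be the main obstacle, is the Bezoutian--Hankel congruence: since $G_1$ has the constant term ${\bf s}_{-1}$, Lemma~\ref{LemCongruenceBezoutHankel} cannot be applied to $G_1$ directly. I would split $G_1={\bf s}_{-1}+R(z)$ with $R(z)=\big(F_o(z)-{\bf s}_{-1}F_e(z)\big)(F_e(z))^{-1}$ strictly proper (its numerator has degree $\le m-1$ and its Markov parameters are exactly $({\bf s}_k)_{k\ge0}$), apply Lemma~\ref{LemCongruenceBezoutHankel} to $R$, and then, after the same reordering of block rows and columns as in the even case, show that ${\mathbf B}_{\what F_e^\vee,-\what F_o^\vee}(\what F_e,-\what F_o)$ is congruent to a block-diagonal matrix with blocks ${\bf s}_{-1}$, ${H}^{\langle\mathscr S\rangle}_{m-1}$ and ${H}^{\langle\mathscr S\rangle}_{1,m-1}$ (up to inessential positive scalar factors). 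The delicate point is to check that the ${\bf s}_{-1}$ block genuinely \emph{decouples} from the Hankel blocks rather than coupling into them --- a decoupling predicted by the scalar Gantmacher criterion, where $s_{-1}>0$ is a stand-alone condition. Granting this, I obtain the first-type inertia formula
\[
    \gamma'_-(F)=\pi({\bf s}_{-1})+\pi({H}^{\langle\mathscr S\rangle}_{m-1})+\pi({H}^{\langle\mathscr S\rangle}_{1,m-1})+\gamma_+(\what F),
\]
together with the analogous identities for $\gamma'_+$ and $\gamma'_0$.

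From this representation the theorem follows exactly as in Theorem~\ref{ThmHurwitzMatrixHurwitzSRMP}. The three Hermitian blocks have total size $p+mp+mp=(2m+1)p=np$. For the ``if'' direction, ${\bf s}_{-1},{H}^{\langle\mathscr S\rangle}_{m-1},{H}^{\langle\mathscr S\rangle}_{1,m-1}\succ0$ make the first three summands equal to $p$, $mp$, $mp$, so $np\ge\gamma'_-(F)\ge np$ forces $\gamma'_-(F)=np$, i.e.\ Hurwitz stability. For the ``only if'' direction, Hurwitz stability forces $\sigma(\what F)=\varnothing$ (otherwise a zero $z_0$ of $\what F$ would yield ${\rm i}z_0,-{\rm i}z_0\in\sigma(F)$ with real parts of opposite sign, contradicting $\gamma'_-(F)=np$, as in the proof of Theorem~\ref{ThmHurwitzMatrixHurwitzSRMP}), hence $\gamma_+(\what F)=0$; then $np=\gamma'_-(F)=\pi({\bf s}_{-1})+\pi({H}^{\langle\mathscr S\rangle}_{m-1})+\pi({H}^{\langle\mathscr S\rangle}_{1,m-1})\le p+mp+mp=np$ forces each block to be positive definite, giving ${\bf s}_{-1}\succ0$, ${H}^{\langle\mathscr S\rangle}_{m-1}\succ0$ and ${H}^{\langle\mathscr S\rangle}_{1,m-1}\succ0$. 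The left-parameter analogue, should it be wanted, follows through Remarks~\ref{ReConnectionSLMPandSRMP} and~\ref{RemarkHurwitzSLMP}.
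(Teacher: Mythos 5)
Your proposal is correct, and it takes a genuinely different route from the paper. The paper never proves a first-type inertia representation for odd degree: instead it shows, by a chain of generating-function identities resting on Lemma~\ref{LemCongruenceBezoutHankel}, that the first-type data are congruent to the second-type Hankel blocks --- explicitly, with $T_E,T_O$ the invertible block anti-triangular matrices of coefficients of $F_e$ and $F_o$, that $T_O^*H_m^{\langle\wtilde{\mathscr S}\rangle}T_O=T_E^*\,\mathrm{diag}\big({\bf s}_{-1},\,H^{\langle\mathscr S\rangle}_{1,m-1}\big)\,T_E$ and that $H^{\langle\mathscr S\rangle}_{m-1}\succ 0$ iff $H^{\langle\wtilde{\mathscr S}\rangle}_{1,m-1}\succ 0$ --- and then simply invokes Theorem~\ref{ThmHurwitzMatrixHurwitzSRMP}, which already covers odd degrees via second-type parameters. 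Your plan instead reruns the Bezoutian analysis of Theorem~\ref{ThmInertiaHankelSRMPOriginal} directly on the first-type parameters. The one step you leave on credit (``granting this'') does hold, and closes in a few lines: writing $\what F_o(z)=z{\bf s}_{-1}\what F_e(z)+\what N(z)$ with $\what N(z)=z\big(F_o(-z^2)-{\bf s}_{-1}F_e(-z^2)\big)$ of degree $\le 2m-1$, and using that ${\bf s}_{-1}$ is Hermitian, the generating function splits as
\[
\frac{\what F_o^\vee(z)\what F_e(u)-\what F_e^\vee(z)\what F_o(u)}{z-u}
=\what F_e^\vee(z)\,{\bf s}_{-1}\,\what F_e(u)
+\frac{\what N^\vee(z)\what F_e(u)-\what F_e^\vee(z)\what N(u)}{z-u},
\]
so ${\mathbf B}_{\what F_e^\vee,-\what F_o^\vee}(\what F_e,-\what F_o)$ equals the zero-padded strictly proper Bezoutian ${\mathbf B}_{\what F_e^\vee,-\what N^\vee}(\what F_e,-\what N)$ plus $T^*{\bf s}_{-1}T$, where $T=(\what E_0,\ldots,\what E_{2m})$ is the block coefficient row of $\what F_e$. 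Since \eqref{MakrovPO2} forces ${\bf s}_{-1}A_1=I_p$ (compare leading coefficients in $F_o={\bf s}_{-1}F_e+O(z^{m-1})$), the top coefficient $\what E_{2m}=(-1)^mA_1$ is invertible; a block congruence therefore carries $T$ to $(0,\ldots,0,I_p)$ without disturbing the padded block, decoupling ${\bf s}_{-1}$ exactly as you predicted, while Lemma~\ref{LemCongruenceBezoutHankel} applied to $-\what N\what F_e^{-1}=\sum_j z^{-(j+1)}\sigma_j$ (where $\sigma_{2k}={\bf s}_k$, $\sigma_{2k+1}=0_p$) and the usual even/odd reordering turn the remaining block into $\mathrm{diag}\big(H^{\langle\mathscr S\rangle}_{m-1},H^{\langle\mathscr S\rangle}_{1,m-1}\big)$ up to congruence. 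What each route buys: yours yields a finer byproduct, namely a genuine first-type odd-degree analogue of Theorem~\ref{ThmInertiaHankelSRMPOriginal}\,(ii) with three inertia summands, valid without assuming stability; the paper's is shorter because it recycles Theorem~\ref{ThmHurwitzMatrixHurwitzSRMP} wholesale, and its first-/second-type congruence is reused right after the theorem to conclude that a polynomial testable by Theorem~\ref{ThmHurwitzMatrixHurwitzSRMP} but not by this theorem cannot be stable. Two small points you should make explicit: Hermitianness of the truncated sequence \eqref{sk-12m-1} must first be propagated to the whole sequence (the analogue of the argument around \eqref{H2kCHC}), since this is what guarantees $\what F_o^\vee\what F_e=\what F_e^\vee\what F_o$ and hence both that the Bezoutians above are well defined and that $L^\vee L=L_1^\vee L_1$ as Lemma~\ref{LemHermite-Fujiwara} requires; and for $n=2m$ the paper has a single notion of Markov parameters rather than two ``coinciding types'', though your index count $\lfloor l/2\rfloor=\lfloor(l-1)/2\rfloor=m-1$ for $l=2m-1$ is correct.
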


\begin{proof} 
    The even case~$\deg F=2m$ reduces to Theorem~\ref{ThmHurwitzMatrixHurwitzSRMP}.\\
    The odd case~$\deg F=2m+1$: put $\mathscr S:=({\bf s}_k)_{k=-1}^{\infty}$, write the even
    and the odd parts of~$F(z)$ as
    \[
        F_e(z)\coloneqq \sum_{k=0}^m z^{m-k} E_k
        \quad\text{and}\quad
        F_o(z)\coloneqq \sum_{k=0}^m z^{m-k} O_k,\quad O_0=I_p,
    \]
    and put
    \[
        F_{-1}(z)\coloneqq \left({\bf s}_0+z{\bf s}_{-1}\right)F_e(z)-zF_o(z), \quad
        F_{o,z}(z)\coloneqq zF_o(z).
    \]
    Since~$z F_o(z)=z\,{\bf s}_{-1}F_e(z) + {\bf s}_0F_e(z) + O(z^{m-1})$, we
    have~$\deg F_{-1}=m-1$. If~$\wtilde {\mathscr S}$ is the sequence of right Markov parameters
    of second type associated with~$F(z)$, then
\begin{align*}
&\begin{bmatrix}I_p\\ \vdots\\z^{m}I_p\end{bmatrix}^{\rm T} \begin{bmatrix}
E_{m}^* & \cdots & E_{0}^*\\
\vdots & \iddots & \\
E_{0}^* &&
\end{bmatrix}\begin{bmatrix}{\bf s}_{-1} & \\ & {H}^{\langle{\mathscr S}\rangle}_{1,m-1} \end{bmatrix}\begin{bmatrix}
E_{m} & \cdots & E_{0}\\
\vdots & \iddots & \\
E_{0} &&
\end{bmatrix}\begin{bmatrix}I_p\\ \vdots\\ u^{m}I_p\end{bmatrix}\\
  &=\left(I_p,\ldots,z^{m-1}I_p\right)
    {\mathbf B}_{F_{-1}^{\vee},F_e^{\vee}}(F_{-1},F_e)
    \begin{bmatrix}
        I_p\\ \vdots\\ u^{m-1}I_p
    \end{bmatrix}
  +F_e^{\vee}(z){\bf s}_{-1} F_e(u)\\
&=\frac{1}{z-u}\left(F_e^{\vee}(z)F_{-1}(u)-F_{-1}^{\vee}(z)F_e(u)\right)+F_e^{\vee}(z){\bf s}_{-1} F_e(u)\\
&=\frac{1}{z-u}\left(zF_o^{\vee}(z)F_e(u)-F_e^{\vee}(z)uF_o(u)\right)\\
&=\left(I_p,\ldots,z^{m}I_p\right){\mathbf B}_{F_e^{\vee},F_{o,z}^{\vee}}(F_e,F_{o,z})(I_p,\ldots,u^{m}I_p)^{\rm T}\\
&=\begin{bmatrix}I_p\\ \vdots\\z^{m}I_p\end{bmatrix}^{\rm T} \begin{bmatrix}
O_{m}^* & \cdots & O_{0}^*\\
\vdots & \iddots & \\
O_{0}^* & &
\end{bmatrix} H^{\langle \wtilde {\mathscr S}\rangle}_{m}\begin{bmatrix}
O_{m} & \cdots & O_{0}\\
\vdots & \iddots & \\
O_{0} & &
\end{bmatrix} 
\begin{bmatrix}I_p\\ \vdots\\ u^{m}I_p
\end{bmatrix},
\end{align*}
where the $1$-st and the last equations are due to Lemma \ref{LemCongruenceBezoutHankel}. It
follows that~$H_m^{\langle \wtilde {\mathscr S}\rangle}$ is Hermitian iff both
$H^{\langle \mathscr S \rangle}_{1,m-1}$ and $ {\bf s}_{-1}$ are Hermitian. Moreover,
$H_m^{\langle \wtilde {\mathscr S}\rangle}\succ 0$ is equivalent to
$H^{\langle \mathscr S \rangle}_{1,m-1},\,{\bf s}_{-1}\succ 0$.

Analogously, one can prove that ${H}^{\langle {\mathscr S}\rangle}_{m-1}$ is Hermitian (resp.\
positive definite) iff $H_{1,m-1}^{\langle \wtilde {\mathscr S}\rangle}$ is Hermitian (resp.\
positive definite). In view of Theorem \ref{ThmHurwitzMatrixHurwitzSRMP}, we complete the proof.
\end{proof}

\begin{remark}
    In Theorem \ref{ThmHurwitzMatrixHurwitzOdd}, the right Markov parameters may be substituted
    by the left Markov parameters: the corresponding result holds true, as is seen from
    Remark~\ref{ReConnectionSLMPandSRMP}.
\end{remark}

The proof of Theorem~\ref{ThmHurwitzMatrixHurwitzOdd} shows that, given a stable monic matrix
polynomial~$F(z)$ of odd degree, its truncated sequence of right Markov parameters of second
type satisfies the condition~\eqref{skH} if and only if the even part of~$F(z)$ is regular (so
that~$F(z)$ has well-defined right Markov parameters of first type) and~\eqref{sk-12m-1} holds.
In other words: a matrix polynomial whose stability can be tested via
Theorem~\ref{ThmHurwitzMatrixHurwitzSRMP}, but not via Theorem~\ref{ThmHurwitzMatrixHurwitzOdd},
cannot be Hurwitz stable.

Theorem \ref{ThmHurwitzMatrixHurwitzSRMP} indeed provides a situation where the answer to
question~(Q1) from Section~\ref{SectionHurwitzStieltjes} is positive and may be written in a
simple form. Based on Theorem~\ref{ThmHurwitzMatrixHurwitzSRMP} and \cite[Theorem 7.10]{Riv}, we
can now deduce the following connection between Hurwitz matrix polynomials and matricial
Stieltjes continued fractions and thereby complete our answer to question~(Q3).

\begin{theorem}\label{ThmHurwitzMatrixHurwitzParameterEven}
Let  $F(z)\in\mathbb C[z]^{p\times p}$ be monic of degree $n$ with  the even part $F_e(z)$  and
odd part $F_o(z)$. Assume that~$\mathscr S$  is the related truncated Hermitian sequence of
right Markov parameters (of second type when $n=2m+1$). Then
\begin{enumerate}
\item[{\rm (i)}] in the even case $n=2m$,
 $F(z)$ is a Hurwitz matrix polynomial if and only if
there exists a  sequence of $p\times p$ positive definite matrices $({\bf c}_k)_{k=1}^{n}$ such that the identity
\begin{equation}\label{S-fractionEven}
\frac{F_{o}(z)}{F_{e}(z)}=\cfrac{I_p}{z{\bf c}_1+\cfrac{I_p}{{\bf c}_2+\cfrac{I_p}{{\bf c}_{n-2}+\cfrac{\ddots}{z{\bf c}_{n-1}+{\bf c}_{n}^{-1}}}}}
\end{equation}
holds for large enough $z\in \mathbb C$.
\item[{\rm (ii)}] in the odd case $n=2m+1$, the following statements are equivalent:
\begin{enumerate}
\item $F(z)$ is a Hurwitz matrix polynomial.
\item  There exists a  sequence of $p\times p$ positive definite matrices $({\bf c}_k)_{k=1}^{n}$ such that the identity
\begin{equation}\label{S-fractionOdd}
\frac{F_{e}(z)}{ F_{o}(z)}=\cfrac{I_p}{{\bf c}_{1}+\cfrac{I_p}{z{\bf c}_2+\cfrac{I_p}{{\bf c}_{n-2}+\cfrac{\ddots}{z{\bf c}_{n-1}+{\bf c}_{n}^{-1}}}}}
\end{equation}
holds for large enough $z\in \mathbb C$.
\item $F_{e}(z)$ is regular and 
there exists a  sequence of $p\times p$ positive definite matrices $({\bf c}_k)_{k=1}^{n}$ such that the identity
\begin{equation*}
\frac{F_{o}(z)}{ F_{e}(z)}={\bf c}_{1}+\cfrac{I_p}{z{\bf c}_2+\cfrac{I_p}{{\bf c}_{n-2}+\cfrac{\ddots}{z {\bf c}_{n-1}+ {\bf c}_{n}^{-1}}}}
\end{equation*}
holds for large enough $z\in \mathbb C$.
\end{enumerate}
\end{enumerate}
 \end{theorem}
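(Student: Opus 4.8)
The plan is to reduce the whole statement to Theorem~\ref{ThmHurwitzMatrixHurwitzSRMP} via the machinery of Dyukarev–Stieltjes parameters attached to a Stieltjes positive definite sequence, using the continued-fraction expansion of the resolvent matrix established in \cite[Theorem 7.10]{Riv}. The key observation is that by Theorem~\ref{ThmHurwitzMatrixHurwitzSRMP}, Hurwitz stability of~$F(z)$ is equivalent to the sequence~$\mathscr S$ of right Markov parameters (of second type when $n=2m+1$) being Stieltjes positive definite. So the task splits into two halves: first identify the rational function $F_o(z)/F_e(z)$ (resp.\ $F_e(z)/F_o(z)$) with the generating function of~$\mathscr S$, and second translate Stieltjes positive definiteness into the existence of the prescribed continued fraction with positive definite partial numerators~${\bf c}_k$.

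\textbf{Even case (i).} First I would recall from Definition~\ref{DefMarkovParameters}~(i) that $F_o(z)/F_e(z)=\sum_{k\ge 0}(-1)^k z^{-(k+1)}{\bf s}_k$, so this rational function is exactly the (matrix) Stieltjes transform encoding~$\mathscr S$. The direction ``$F$ Hurwitz $\Rightarrow$ continued fraction'' then proceeds by invoking Theorem~\ref{ThmHurwitzMatrixHurwitzSRMP} to get that~$\mathscr S$ is Stieltjes positive definite, and then applying~\cite[Theorem 7.10]{Riv}, which produces the Stieltjes-type continued fraction for such a sequence with partial numerators equal to the Dyukarev–Stieltjes parameters; these are positive definite precisely because the sequence is Stieltjes positive definite. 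I would set ${\bf c}_k$ to be (a relabeling of) these Dyukarev–Stieltjes parameters and verify that the sign pattern of~\eqref{S-fractionEven}, i.e.\ the alternation between $z{\bf c}_{2j-1}$ and ${\bf c}_{2j}$ terms, matches the expansion in~$z^{-(k+1)}$ after the substitution tracking the $(-1)^k$ factors. Conversely, if such a continued fraction with ${\bf c}_k\succ 0$ exists, one reads off that the generating sequence it produces is Stieltjes positive definite—this is the ``only if'' content of the same correspondence in~\cite{Riv}—and again Theorem~\ref{ThmHurwitzMatrixHurwitzSRMP} closes the loop.

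\textbf{Odd case (ii).} The implications (a)$\Leftrightarrow$(b) I would treat exactly as in the even case but now using the right Markov parameters of \emph{second} type, whose generating function is $F_e(z)/F_o(z)=\sum_{k\ge 0}(-1)^k z^{-k}{\bf s}_k$ by Definition~\ref{DefMarkovParameters}~(iii); the extra leading term merely shifts the indexing and accounts for the leading ${\bf c}_1$ (rather than $z{\bf c}_1$) block in~\eqref{S-fractionOdd}. For (a)$\Leftrightarrow$(c) the crucial extra ingredient is regularity of~$F_e(z)$: I would argue that stability forces $\widehat F$ to have empty spectrum (exactly the GRCD argument already used in the ``only if'' proof of Theorem~\ref{ThmHurwitzMatrixHurwitzSRMP}), from which $F_e$ is regular and the Markov parameters of first type exist. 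Then $F_o/F_e$ and $F_e/F_o$ are honest matricial inverses of each other, so the continued fraction in~(c) is obtained from the one in~(b) simply by inverting the top level, which converts the leading $I_p/{\bf c}_1$ block into an additive ${\bf c}_1$ term; the positive definiteness of the ${\bf c}_k$ is preserved since inversion of the whole fraction does not touch them.

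\textbf{Main obstacle.} The routine analytic steps are the expansions and the congruences; the delicate part is \emph{matching conventions} between the continued fraction normalization in~\cite[Theorem 7.10]{Riv} and the specific bracketing, index ranges, and sign conventions in~\eqref{S-fractionEven} and~\eqref{S-fractionOdd}—in particular confirming that Choque Rivero's Dyukarev–Stieltjes parameters correspond under a clean relabeling to our ${\bf c}_1,\dots,{\bf c}_n$ with the stated alternation of~$z$-factors, and that the terminal block ${\bf c}_n^{-1}$ arises correctly from the truncation at $2m$ (resp.\ $2m+1$) parameters. I expect this bookkeeping, together with verifying that ``Stieltjes positive definite'' in Definition~\ref{DefHankel} is equivalent to the strict positive definiteness of all Dyukarev–Stieltjes parameters in~\cite{Riv}, to be where the real care is required, while the rest follows formally from Theorem~\ref{ThmHurwitzMatrixHurwitzSRMP}.
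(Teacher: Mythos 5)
Your proposal is correct and follows the paper's route exactly: the paper supplies no separate argument for this theorem, deducing it, just as you do, from Theorem~\ref{ThmHurwitzMatrixHurwitzSRMP} combined with the continued-fraction characterization of Stieltjes positive definite sequences in \cite[Theorem 7.10]{Riv}, so the convention-matching between Choque Rivero's normalization and \eqref{S-fractionEven}, \eqref{S-fractionOdd} that you flag as the main obstacle is precisely the bookkeeping the paper leaves implicit. Your odd-case treatment is also sound: in (a)$\Rightarrow$(c) the unimodularity of the GRCD of $F_e(-z^2)$ and $zF_o(-z^2)$ does force $F_e(z)$ to be regular (evaluate the stacked pair at $z=0$, where $zF_o(-z^2)$ vanishes, so full column rank there requires $F_e(0)$ invertible), or even more directly, Stieltjes positive definiteness of the second-type sequence gives ${\bf s}_0=A_1\succ 0$, i.e.\ an invertible leading coefficient of $F_e(z)$.
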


 In \cite{Riv}, matrix polynomials satisfying \eqref{S-fractionEven} or \eqref{S-fractionOdd}
 are called matrix Hurwitz type polynomials. In this sense, the notions ``matrix Hurwitz type
 polynomial'' and ``Hurwitz matrix polynomial'' are equivalent. In the scalar case when $p=1$,
 the statement (i) and the equivalence between (a) and (c) of (ii) are indeed the classical
 stability criterion via continued fractions.
 
\subsection{Hurwitz matrix polynomials, block-Hankel minors and  quasiminors}
The rest of this paper is devoted to block-Hankel minors and block-Hankel quasiminors in connection to
Hurwitz matrix polynomials.

We begin with an introduction of quasideterminants, which play an important role in
noncommutative algebra as determinants do in commutative algebra. The most general
definitions~\cite[(1.1), P. 92]{GR91} and~\cite[Definition 1.2.5]{GGRW} become simpler in the
particular case we need --- for quasideterminants of block matrices over~$\C$.
\begin{definition}\label{DefQuasi}
Let $l\in \N$ and $l\leq 2$ and let ${\bf M}\in \mathbb C^{pl\times pl}$ with a block decomposition ${\bf M}\coloneqq [M_{jk}]_{j,k=1}^l$, where $M_{jk}\in \mathbb C^{p\times p}$.
 Suppose that ${\bf M}_{(l;l)}\coloneqq [M_{jk}]_{j,k=1}^{l-1}\in \mathbb C^{(l-1)p\times (l-1)p}$ is nonsingular. The \emph{quasideterminant of} ${\bf M}$ \emph{with index} $(l,l)$, denoted by  $\left|{\bf M} \right|_{ll}$, is the following expression
 \begin{equation*}
\left| {\bf M} \right|_{ll}\coloneqq M_{ll}- \left[M_{l1}, \cdots, M_{l,l-1}\right]  {\bf M}_{(l;l)}^{-1}
\begin{bmatrix}
M_{1l}\\
\vdots\\
M_{l-1,l}
\end{bmatrix}.
\end{equation*}
\end{definition}
In fact, $\left| {\bf M} \right|_{ll}$ in our setting coincides with the Schur complement
of~${\bf M}_{(l;l)}$ in ${\bf M}$.

\begin{definition}
Let 
\begin{equation}\label{infinitematrix}
H\coloneqq [{\bf s}_{j+k}]_{j,k=0}^{\infty}
\end{equation}
 be an infinite block Hankel matrix 
 with ${\bf s}_{k}\in \mathbb C^{p\times p}$. 
 For $l\in \N$ and $l\geq 2$, let
\begin{equation}\label{tildeHl}
\wtilde H_l\coloneqq \begin{bmatrix}
{\bf s}_{j_1+k_1} & {\bf s}_{j_1+k_2} & \cdots & {\bf s}_{j_1+k_l}\\
{\bf s}_{j_2+k_1} & {\bf s}_{j_2+k_2} & \cdots & {\bf s}_{j_2+k_l}\\
\vdots & \vdots && \vdots\\
{\bf s}_{j_{l}+k_1} & {\bf s}_{j_{l}+k_2} & \cdots & {\bf s}_{j_{l}+k_l}
\end{bmatrix}\in \mathbb C^{pl\times pl}
\end{equation}
be  a submatrix  of $H$, where $0\leq k_1< k_2<\cdots < k_l$ and $0\leq j_1<j_2<\cdots < j_{l}$. Then
\begin{enumerate}
\item[{\rm (i)}] $\det\wtilde H_l$ is called  a block-Hankel minor of order $l$ of $H$.
\item[{\rm (ii)}] if $|\wtilde H_l|_{ll}$ is well-defined,  $|\wtilde H_l|_{ll}$ is called  a block-Hankel quasiminor of order~$l$ of~$H$.
\end{enumerate}
 Moreover, if $j_l-j_{l-1}=\cdots=j_2-j_1=1$ and $k_l-k_{l-1}=\cdots=k_2-k_1=1$, then \begin{enumerate}
\item[{\rm (i)}] $\det\wtilde H_l$ is called  a contiguous block-Hankel minor of order $l$ of $H$.
\item[{\rm (ii)}] if $|\wtilde H_l|_{ll}$ is well-defined, $|\wtilde H_l|_{ll}$ is called a
    contiguous block-Hankel quasiminor of order $l$ of $H$.
\end{enumerate} 
\end{definition}

For any given matrix polynomial, we can describe the vanishing behaviour for large block-Hankel
minors built from matricial Markov parameters.

\begin{proposition}\label{VanishHankelMinors}
    Let $F(z)\in\mathbb C[z]^{p\times p}$ be monic of degree $n=2m$ or $n=2m+1$. Assume that
    $\mathscr S$ is the related sequence of right Markov parameters (of first type for~$n=2m+1$
    if the even part~$F_e(z)$ is regular). Then all block-Hankel minors
    of~$H^{\langle \mathscr S\rangle}_{\infty}$ of order $>m$ are equal to zero.
\end{proposition}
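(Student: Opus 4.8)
The plan is to reduce the whole statement to a single bound on the rank of the infinite block Hankel matrix~$H^{\langle \mathscr S\rangle}_{\infty}$, and then observe that a sufficiently large submatrix is forced to be singular. A block-Hankel minor of order~$l$ is, by definition, the determinant of a submatrix $\wtilde H_l\in\C^{pl\times pl}$ of $H^{\langle\mathscr S\rangle}_{\infty}$. If I can show that $\rank H^{\langle\mathscr S\rangle}_{\infty}\le mp$, then for every $l>m$ one has $\rank\wtilde H_l\le\rank H^{\langle\mathscr S\rangle}_{\infty}\le mp<p(m+1)\le pl$, so $\wtilde H_l$ is singular and $\det\wtilde H_l=0$. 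Thus the proposition follows entirely from the claim that $\rank H^{\langle\mathscr S\rangle}_{\infty}\le mp$.

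\textbf{A length-$m$ recurrence for the Markov parameters.} To bound the rank I would first produce a block linear recurrence for the~${\bf s}_k$. Put $g(z)\coloneqq\sum_{k=0}^{\infty}(-1)^k z^{-(k+1)}{\bf s}_k$. By~\eqref{MakrovPE} (resp.\ \eqref{MakrovPO2}) one has $g(z)F_e(z)=N(z)$, where $N(z)$ is a matrix polynomial with $\deg N\le m-1$: indeed $N=F_o$ in the even case and $N=F_o-{\bf s}_{-1}F_e$ in the odd case, and in both cases $\deg F_e=m$. Writing $F_e(z)=\sum_{i=0}^m\Phi_i z^{m-i}$ and equating the coefficients of $z^{-1-t}$ in $g(z)F_e(z)$, which all vanish because $\deg N\le m-1$, yields for every $t\in\N_0$ the relation $\sum_{r=0}^m(-1)^r{\bf s}_{r+t}\Phi_{m-r}=0$. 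In the even case this is precisely the content of~\eqref{HC} with $\Phi_0=I_p$ and the companion matrix~\eqref{Cmatrix}.

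\textbf{Solving for the top index.} The delicate point is that I must solve this relation for the top parameter~${\bf s}_{m+t}$, i.e.\ that $\Phi_0$, the leading coefficient of $F_e$, is invertible. In the even case $\Phi_0=A_0=I_p$. In the odd first-type case $\Phi_0=A_1$, and here invertibility is automatic: comparing the $z^m$-coefficients in $G_1(z)F_e(z)=F_o(z)$, using $G_1={\bf s}_{-1}+O(z^{-1})$ and $A_0=I_p$, forces ${\bf s}_{-1}A_1=I_p$, so $A_1$ is invertible. Hence there exist matrices $C_0,\dots,C_{m-1}\in\C^{p\times p}$, independent of $t$, with ${\bf s}_{m+t}=\sum_{r=0}^{m-1}{\bf s}_{r+t}C_r$ for all $t\in\N_0$.

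\textbf{From the recurrence to the rank bound.} Finally I would read this index recurrence as a statement about block-columns. Let $\mathbf h_k=({\bf s}_k,{\bf s}_{k+1},\dots)^{\rm T}$ be the $k$-th block-column of $H^{\langle\mathscr S\rangle}_{\infty}$; the recurrence then reads $\mathbf h_{m+t}=\sum_{r=0}^{m-1}\mathbf h_{r+t}C_r$, so by induction on $k$ every $\mathbf h_k$ with $k\ge m$ is a right $\C^{p\times p}$-combination of $\mathbf h_0,\dots,\mathbf h_{m-1}$. Consequently every scalar column of $H^{\langle\mathscr S\rangle}_{\infty}$ lies in the span of the $mp$ scalar columns constituting $\mathbf h_0,\dots,\mathbf h_{m-1}$, giving $\rank H^{\langle\mathscr S\rangle}_{\infty}\le mp$ and completing the proof. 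I expect the main obstacle to be the odd case: one has to verify that existence of the first-type expansion forces the leading coefficient of $F_e$ to be invertible, so that the recurrence can be solved, and that—since ${\bf s}_{-1}$ contributes only to the nonnegative powers of $z$—the recurrence involves solely the parameters ${\bf s}_k$ with $k\ge0$ that actually occur in $H^{\langle\mathscr S\rangle}_{\infty}$.
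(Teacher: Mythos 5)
Your proof is correct and takes essentially the same route as the paper: the length-$m$ recurrence $\sum_{r=0}^m(-1)^r{\bf s}_{r+t}\Phi_{m-r}=0$ that you extract from $g(z)F_e(z)=N(z)$ is exactly the companion-matrix relation \eqref{HC}/\eqref{SSC} on which the paper's argument rests, and both proofs conclude by the same rank bound $\rank\wtilde H_l\le mp<lp$ forcing every minor of order $>m$ to vanish. A minor bonus on your side: you spell out the odd first-type case (including the invertibility of the leading coefficient $A_1$ of $F_e$ via ${\bf s}_{-1}A_1=I_p$), which the paper omits with ``we only give a proof for $n=2m$''.
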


\begin{proof}
We only give a proof for $n=2m$. Let  $\wtilde H_l$ be any given $lp\times lp$ submatrix  of~$H$ as in \eqref{tildeHl}. 
Comparing  both sides of \eqref{MakrovPE} yields that
\begin{equation}\label{SSC}
\begin{bmatrix}
{\bf s}_{t+k} &\cdots & {\bf s}_{t+k+m-1}\\
{\bf s}_{t+k+1} &\cdots & {\bf s}_{t+k+m}\\
\vdots & & \vdots\\
{\bf s}_{t+k+j_l} &\cdots & {\bf s}_{t+k+j_l+m-1}
\end{bmatrix}=\begin{bmatrix}
{\bf s}_{k} &\cdots & {\bf s}_{k+m-1}\\
{\bf s}_{k+1} &\cdots & {\bf s}_{k+m}\\
\vdots & & \vdots\\
{\bf s}_{k+j_l} &\cdots & {\bf s}_{k+j_l+m-1}
\end{bmatrix}C^t,
\end{equation}
where $C$ is as in \eqref{Cmatrix}.  Using \eqref{SSC}, we have
\begin{equation*}
\begin{bmatrix}
{\bf s}_{k_1} & \cdots & {\bf s}_{k_l} \\
{\bf s}_{k_1+1} & \cdots & {\bf s}_{k_l+1}\\
\vdots & & \vdots\\
{\bf s}_{k_1+j_l}& \cdots & {\bf s}_{k_l+j_l}
\end{bmatrix}=\begin{bmatrix}
{\bf s}_{0} &\cdots & {\bf s}_{m-1}\\
{\bf s}_{1} &\cdots & {\bf s}_{m}\\
\vdots & & \vdots\\
{\bf s}_{j_l} &\cdots & {\bf s}_{j_l+m-1}
\end{bmatrix}
\begin{bmatrix}
C^{k_1}\cdot \begin{bmatrix} I_p\\0_p\\\vdots\\0_p
\end{bmatrix} & \cdots &
C^{k_l}\cdot \begin{bmatrix} I_p\\0_p\\\vdots\\0_p
\end{bmatrix}
\end{bmatrix}
\end{equation*} 
Then it follows that
\begin{equation*}
\rank \wtilde H_l\leq \rank \begin{bmatrix}
{\bf s}_{k_1} & \cdots & {\bf s}_{k_l} \\
{\bf s}_{k_1+1} & \cdots & {\bf s}_{k_l+1}\\
\vdots & & \vdots\\
{\bf s}_{k_1+j_l}& \cdots & {\bf s}_{k_l+j_l}
\end{bmatrix}\leq \rank \begin{bmatrix}
{\bf s}_{0} &\cdots & {\bf s}_{m-1}\\
{\bf s}_{1} &\cdots & {\bf s}_{m}\\
\vdots & & \vdots\\
{\bf s}_{j_l} &\cdots & {\bf s}_{j_l+m-1}
\end{bmatrix}=mp,
\end{equation*}
which means that $\det\wtilde H_l=0$.
\end{proof}

The following theorem describes Hurwitz matrix polynomials via the block-Hankel minors and quasiminors
built from right Markov parameters.

\begin{theorem}\label{TP}
    Let $F(z)\in\mathbb C[z]^{p\times p}$ be monic of degree $n=2m$ or $n=2m+1$. Assume that
    $\mathscr S$ is the related Hermitian matrix sequence of right Markov parameters (of first
    type for~$n=2m+1$ when the even part $F_e(z)$ is regular). Then $F(z)$ is a monic Hurwitz
    matrix polynomial if and only if the following statements are true simultaneously:
\begin{enumerate}
\item[{\rm (i)}]   all contiguous block-Hankel quasiminors of $H^{\langle \mathscr S\rangle}_{\infty}$ of order $\leq m$ are positive definite.
\item[{\rm (ii)}] all block-Hankel minors of $H^{\langle \mathscr S\rangle}_{\infty}$ of order $>m$ are $0$.
\item[{\rm (iii)}] If $n=2m+1$, the first element of~$\mathscr S$
    satisfies~${\bf s}_{-1}\succ 0$.
\end{enumerate}
\end{theorem}

\begin{proof}
    We only give a proof for the even case $n=2m$. The ``if'' implication is obvious due to
    Theorem \ref{ThmHurwitzMatrixHurwitzOdd}.
    
    The ``'only if'' implication: If~$F(z)$ is Hurwitz stable, (ii) and (iii) are immediate
    consequences of Proposition \ref{VanishHankelMinors} and
    Theorem~\ref{ThmHurwitzMatrixHurwitzOdd}, respectively. Moreover,
    Theorem~\ref{ThmHurwitzMatrixHurwitzOdd} implies $H_{m-1}^{\langle\mathscr S\rangle}$,
    ${H}^{\langle \mathscr S\rangle}_{1,m-1}\succ 0$, and hence
    ${H}^{\langle \mathscr S\rangle}_{k,m-1}\succ 0$ for all~$k\in\N_0$ by~\eqref{H2kCHC}. So,
    the inertia additivity formula for the Schur complement yields~(i).
  \end{proof}

\begin{remark}
    On account of Remark \ref{ReConnectionSLMPandSRMP}, the right Markov parameters in
    Proposition \ref{VanishHankelMinors} and Theorem \ref{TP} may be replaced by the left Markov
    parameters: the corresponding results also hold true.
\end{remark}

It follows from Theorem \ref{TP} that all contiguous block-Hankel minors of order $\leq m$
 are real and positive. This motivated us to seek the block-Hankel total
positivity of~$H^{\langle \mathscr S\rangle}_{\infty}$ as simultaneous satisfaction of the
following:
\begin{enumerate}
\item[(a)] the conditions (i)--(ii) of Theorem \ref{TP} hold, and
\item[(b)] all block-Hankel minors of order~$\leq m$ are positive  real.
\end{enumerate} However, this new property does not help in constructing a direct matrix
generalization of \cite[Theorem 20, Chapter XV]{Gan}. More specifically, the answer to the
following question:

\emph{Is Hurwitz stability of $F(z)$ equivalent to the condition (iii) of Theorem~\ref{TP} and
    the block-Hankel total positivity of~$H^{\langle \mathscr S\rangle}_{\infty}$?}\\
is unfortunately negative, as is seen from the following counterexamples.

Consider the real monic matrix polynomial 
\[
    F(z)=
    \begin{bmatrix}
        x^4+3 x^3+19 x^2+19 x+60 & -\frac{5 x^3}{2}-14 x^2-19 x-56 \\[2pt]
        -\frac{5 x^3}{2}+12 x^2+124 x+24 & x^4+\frac{57 x^3}{4}-9 x^2-\frac{221 x}{2}-22
    \end{bmatrix}
\]
of degree~$4$, whose right Markov parameters
\begin{gather*}
    {\bf s}_{0}=
    \begin{bmatrix}
        3 & -\frac{5}{2} \\[2pt]
        -\frac{5}{2} & \frac{57}{4}
    \end{bmatrix}
    ,\quad
    {\bf s}_{1}=
    \begin{bmatrix}
        8 & -\frac{1}{2} \\[2pt]
        -\frac{1}{2} & \frac{69}{4}
    \end{bmatrix}
    ,\quad
    {\bf s}_{2}=
    \begin{bmatrix}
        26 & \frac{11}{2} \\[2pt]
        \frac{11}{2} & \frac{101}{4}
    \end{bmatrix}
    ,\\
    {\bf s}_{3}=
    \begin{bmatrix}
        92 & \frac{47}{2} \\[2pt]
        \frac{47}{2} & \frac{189}{4}
    \end{bmatrix}
    ,\quad
    {\bf s}_{4}=
    \begin{bmatrix}
        338 & \frac{155}{2} \\[2pt]
        \frac{155}{2} & \frac{437}{4} 
    \end{bmatrix}
\end{gather*}
are Hermitian (here real symmetric). Both block Hankel matrices~$[{\bf s}_{i+j}]_{i,j=0}^1$ and
$[{\bf s}_{i+j+1}]_{i,j=0}^1$ are positive definite, so~$F(z)$ is a Hurwitz matrix polynomial.
Nonetheless,
\[
    \begin{vmatrix}
        {\bf s}_0&{\bf s}_2\\
        {\bf s}_1&{\bf s}_3
    \end{vmatrix}
    =-\frac{3}{4}
    ,\quad
    \begin{vmatrix}
        {\bf s}_0&{\bf s}_3\\
        {\bf s}_1&{\bf s}_4
    \end{vmatrix}
    =
    -\frac{581}{4}
    ,\quad
    \begin{vmatrix}
        {\bf s}_1&{\bf s}_3\\
        {\bf s}_2&{\bf s}_4
    \end{vmatrix}
    =
    -18.
\]

\begin{claim}\label{ClaimNonPositive} Let $F(z)\in\mathbb C[z]^{p\times p}$ be a monic Hurwitz matrix polynomial of
    degree~$n=2m$ or~$n=2m+1$. Assume that~$\mathscr S$ is the related Hermitian matrix sequence
    of left or right Markov parameters (of first type for~$n=2m+1$ if the even part of $F(z)$ is
    regular). Then non-contiguous block-Hankel minors
    of~$H^{\langle \mathscr S\rangle}_{\infty}$ of order~$\le m$ may have non-positive real
    values.
\end{claim}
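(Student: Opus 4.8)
The statement asserts existence (``may have''), so the plan is to settle it by explicit counterexamples and then to cover all four sub-cases (even or odd degree, left or right Markov parameters) by symmetry reductions. The even-degree right-parameter case is already delivered by the degree-$4$ polynomial displayed just above: its right Markov parameters ${\bf s}_0,\dots,{\bf s}_4$ are real symmetric, the contiguous Hankel blocks $[{\bf s}_{i+j}]_{i,j=0}^{1}$ and $[{\bf s}_{i+j+1}]_{i,j=0}^{1}$ are positive definite, so $F(z)$ is Hurwitz stable by Theorem~\ref{ThmHurwitzMatrixHurwitzOdd}, while the three displayed non-contiguous block-Hankel minors of order $m=2$ equal $-\tfrac34$, $-\tfrac{581}{4}$ and $-18$. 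This alone proves the claim in the even case for right Markov parameters.

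To pass to left Markov parameters I would invoke Remark~\ref{ReConnectionSLMPandSRMP}: the left Markov parameters of $G(z)$ coincide with the conjugate transposes of the right Markov parameters of $G^{\vee}(z)$. Taking $G\coloneqq F^{\vee}$ with $F$ the polynomial above, one has $\sigma(G)=\overline{\sigma(F)}$, and since conjugation preserves the sign of the real part, $G$ is again Hurwitz stable. Its left Markov parameters are then ${\bf s}_j^{*}={\bf s}_j$ (the parameters being real symmetric), so the very same non-contiguous minors of order $m$ reappear with the values $-\tfrac34$, $-\tfrac{581}{4}$ and $-18$. This settles the even case for left parameters.

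For the odd case a genuinely new example is required, since a non-contiguous minor of order at least $2$ forces $m\ge2$, hence $n=2m+1\ge5$. I would exhibit an explicit monic polynomial of degree~$5$ whose first-type right Markov parameters $({\bf s}_k)_{k=-1}^{\infty}$ satisfy ${\bf s}_{-1}\succ0$ together with $H^{\langle\mathscr S\rangle}_{m-1},\,H^{\langle\mathscr S\rangle}_{1,m-1}\succ0$ --- guaranteeing Hurwitz stability through Theorem~\ref{ThmHurwitzMatrixHurwitzOdd} --- while arranging a non-contiguous minor of order $\le m$ built from ${\bf s}_0,{\bf s}_1,\dots$ to be negative. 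Such a polynomial can be produced either directly, in the spirit of the examples in Section~\ref{Main}, or by prescribing the desired Markov parameters together with a positive definite ${\bf s}_{-1}$ and then recovering the coefficients through the one-to-one correspondence between coefficients and Markov parameters (the even-case instance being~\eqref{ASA}--\eqref{Cmatrix}). The left odd case follows from the resulting example exactly as in the previous paragraph, via Remark~\ref{ReConnectionSLMPandSRMP}.

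The main obstacle is the odd-degree construction. The positivity requirements $H^{\langle\mathscr S\rangle}_{m-1},\,H^{\langle\mathscr S\rangle}_{1,m-1}\succ0$ and ${\bf s}_{-1}\succ0$ control only the contiguous Hankel data, whereas the offending minor is non-contiguous, so in principle there is enough freedom to impose both simultaneously; the delicate point is the bookkeeping needed to confirm that the reconstructed coefficients really reproduce the prescribed parameters and that the polynomial is Hurwitz stable --- a fact one finally checks either against the positivity conditions of Theorem~\ref{ThmHurwitzMatrixHurwitzOdd} or directly against the location of the zeros.
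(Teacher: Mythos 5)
Your proposal is correct and takes essentially the same route as the paper: the claim is purely existential (``may have''), and the paper's entire justification is precisely the degree-$4$ counterexample you cite, whose real symmetric right Markov parameters give positive definite contiguous blocks $[{\bf s}_{i+j}]_{i,j=0}^{1}$ and $[{\bf s}_{i+j+1}]_{i,j=0}^{1}$ (hence Hurwitz stability) while the displayed non-contiguous minors of order $m=2$ equal $-\tfrac34$, $-\tfrac{581}{4}$ and $-18$. Your further reductions --- the passage to left parameters via Remark~\ref{ReConnectionSLMPandSRMP} is sound, and the odd-degree construction is only sketched --- are logically unnecessary, since one instance within the stated class already settles the claim, so the unfinished odd case creates no gap.
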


Now, consider the following monic matrix polynomial of degree~$6$:
\begin{align*}
    F(z)
  ={}&
    \begin{bmatrix}
        1&0\\0& 1
    \end{bmatrix} z^6 +
    \begin{bmatrix}
        8& 3 \i\\-3 \i& 9
    \end{bmatrix} z^5 +
    \frac{17 - 32\i}{1313}\Bigg(\!
    \begin{bmatrix}
        150 + 300 \i& 12 \i\\
        64 + 40 \i& 190 + 340 \i
    \end{bmatrix} z^4
  \\
    &+
    \begin{bmatrix}
        587 + 1664 \i& -1071 + 570 \i\\ 1425 - 186 \i & 1372 + 2356 \i
    \end{bmatrix} z^3 +
    \begin{bmatrix}
        331 + 676 \i & 36 \i\\
        412 + 220 \i & 551 + 956 \i
    \end{bmatrix} z^2
  \\
    &+
    \begin{bmatrix}
        -89 + 2464 \i & -2313 + 837 \i \\ 3467 + 287 \i & 2587 + 4288 \i
    \end{bmatrix} z +
    \begin{bmatrix}
        198 + 408 \i & 24 \i\\ 348 + 180 \i & 378 + 648 \i
    \end{bmatrix}
    \!\Bigg)
\end{align*}
with Hermitian right Markov parameters:
\begin{gather*}
    {\bf s}_{0}=
    \begin{bmatrix}
      8 & 3 \i \\
      -3 \i & 9
  \end{bmatrix}
  ,\
  {\bf s}_{1}=
  \begin{bmatrix}
      29 & 3 \\
      3 & 22 
  \end{bmatrix}
  ,\
  {\bf s}_{2}=
  \begin{bmatrix}
      145 & 21-24 \i \\
      21+24 \i & 100
  \end{bmatrix}
  ,
  \\
  {\bf s}_{3}=
  \begin{bmatrix}
    839 & 153-186 \i \\
    153+186 \i & 592
  \end{bmatrix}
  ,\
  {\bf s}_{4}=
  \begin{bmatrix}
    5173 & 1185-1212 \i \\
    1185+1212 \i & 3784
  \end{bmatrix}
  ,
  \\
  {\bf s}_{5}=
  \begin{bmatrix}
      32879 & 9273-7530 \i \\
      9273+7530 \i & 24832 
  \end{bmatrix}
  .
\end{gather*}
Since both block Hankel matrices $[{\bf s}_{i+j}]_{i,j=0}^2$ and $[{\bf s}_{i+j+1}]_{i,j=0}^2$
are positive definite, $F(z)$ is a Hurwitz matrix polynomial. However, some non-contiguous
block-Hankel minors of~$[{\bf s}_{i+j}]_{i,j=0}^\infty$ of order~$\le \deg F=3$ have complex
values:
\begin{align*}
    \begin{vmatrix}
        {\bf s}_0&{\bf s}_2\\
        {\bf s}_1&{\bf s}_3
    \end{vmatrix}
                   &= 3323095 - 24840 \i
    ,
&    \begin{vmatrix}
        {\bf s}_0&{\bf s}_3\\
        {\bf s}_1&{\bf s}_4
    \end{vmatrix}
                   &=152111099 - 2414520 \i
    ,
\\    \begin{vmatrix}
        {\bf s}_1&{\bf s}_3\\
        {\bf s}_2&{\bf s}_4
    \end{vmatrix}
                   &=327769380 - 2969280 \i
    ,
&    \begin{vmatrix}
        {\bf s}_0&{\bf s}_1&{\bf s}_2\\
        {\bf s}_2&{\bf s}_3&{\bf s}_4\\
        {\bf s}_3&{\bf s}_4&{\bf s}_5
    \end{vmatrix}
                   &=(5859396 - 3456 \i)\cdot 10^3.
\end{align*}

\begin{claim} Under the conditions of Claim \ref{ClaimNonPositive}, non-contiguous block-Hankel
    minors of $H^{\langle \mathscr S\rangle}_{\infty}$ of order~$\le m$ may be not real.
\end{claim}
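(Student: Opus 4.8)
The final Claim is a pure existence (counterexample) assertion, so the plan is not to prove anything about all Hurwitz polynomials but to exhibit a single one for which the conclusion fails in the strongest way: a Hurwitz matrix polynomial of even degree $n=2m$ whose Hermitian sequence of right Markov parameters produces a non-contiguous block-Hankel minor of order $\le m$ that is genuinely complex. The degree-$6$ matrix polynomial displayed immediately above the Claim (so $n=6$ and $m=3$) is exactly such a witness, and the proof reduces to verifying two things about it: that it is Hurwitz stable, and that one of its order-$2$ non-contiguous block-Hankel minors has nonzero imaginary part.

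First I would confirm that the tabulated matrices $\mathbf s_0,\dots,\mathbf s_5$ really are the right Markov parameters of $F(z)$ and that each is Hermitian. Both follow by a direct check from Definition~\ref{DefMarkovParameters}(i): one compares the Laurent expansion~\eqref{MakrovPE} of $F_o(z)(F_e(z))^{-1}$ with the listed values (equivalently, $\mathbf s_0,\dots,\mathbf s_5$ are read off from the coefficient relation~\eqref{ASA} and the recursion~\eqref{HC}), and Hermiticity is immediate entrywise, e.g.\ $(3\i)^*=-3\i$ for $\mathbf s_0$. With $\mathscr S=(\mathbf s_k)_{k\ge0}$ thus known to be a Hermitian sequence, Hurwitz stability is established through Theorem~\ref{ThmHurwitzMatrixHurwitzSRMP}: since $n=2m=6$, it suffices to verify that the two $6\times6$ block Hankel matrices $H_{2}^{\langle\mathscr S\rangle}=[\mathbf s_{i+j}]_{i,j=0}^{2}$ and $H_{1,2}^{\langle\mathscr S\rangle}=[\mathbf s_{i+j+1}]_{i,j=0}^{2}$ are positive definite. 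This is a finite computation — for instance Sylvester's criterion applied to the six leading principal minors of each — and once confirmed, Theorem~\ref{ThmHurwitzMatrixHurwitzSRMP} yields that $F(z)$ is a Hurwitz matrix polynomial.

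Finally, with $m=3$, I would point to a non-contiguous block-Hankel minor of order $2\le m$ that is not real. Choosing rows $\{j_1,j_2\}=\{0,1\}$ and columns $\{k_1,k_2\}=\{0,2\}$ of $H^{\langle\mathscr S\rangle}_{\infty}$ gives
\[
    \begin{vmatrix}
        \mathbf s_0 & \mathbf s_2\\
        \mathbf s_1 & \mathbf s_3
    \end{vmatrix}
    = 3323095 - 24840\,\i,
\]
whose imaginary part is nonzero; the three further minors listed above (of orders $2$ and $3$) serve as redundant confirmations. This completes the proof of the Claim. I expect the only genuinely nontrivial ingredient to be the \emph{construction} of the example — engineering a stable polynomial whose Hermitian Markov sequence forces complex off-diagonal block-Hankel minors, which is precisely the phenomenon the Claim is designed to expose; by contrast, the verification itself is entirely mechanical, the positive-definiteness test for the two $6\times6$ matrices being the most computation-heavy yet still routine step.
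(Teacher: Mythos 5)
Your proposal is correct and takes essentially the same route as the paper: the claim is justified there by precisely the degree-$6$ example displayed immediately above it, with Hurwitz stability deduced from the positive definiteness of $[\mathbf{s}_{i+j}]_{i,j=0}^{2}$ and $[\mathbf{s}_{i+j+1}]_{i,j=0}^{2}$ via Theorem~\ref{ThmHurwitzMatrixHurwitzSRMP}, and the non-real values established by the very minors you quote, e.g.\ $3323095-24840\,\i$ for the rows $\{0,1\}$, columns $\{0,2\}$ minor of order $2\le m=3$. The only cosmetic difference is that the paper writes ``order $\le\deg F=3$'' (where it clearly means $m=3$, not $\deg F=6$), a slip your formulation silently corrects.
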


\section*{Acknowledgement}
We are grateful to Bernd Kirstein, the supervisor of the first author, for motivation to study
this interesting topic. We would also like to thank Conrad M\"adler for insightful discussions.
The second author is grateful to the German Research Foundation~(DFG) for the financial support
(research fellowship DY 133/1-1).
\appendix

\section{Greatest common divisors of matrix polynomials}
\label{sec:great-comm-divis}

\begin{definition}
A matrix
polynomial~$F(z)\in \mathbb C[z]^{p\times p}$ is called
\emph{unimodular} if $\det F(z)$ never vanishes in~$\mathbb C$.
\end{definition}

A GRCD/GLCD of two matrix
polynomials is only unique up to multiplication by a unimodular matrix
polynomial.

\begin{proposition}[{\cite[pp. 377-378]{Kai}}]\label{ReGLCDInertia}
 Let $F(z)$, $\wtilde F(z)\in \mathbb C[z]^{p\times p}$ and let $F_1(z)$ be a GLCD
    (resp.\ GRCD) of $F(z)$ and $\wtilde F(z)$. Then
    $F_2(z)\in \mathbb C[z]^{p\times p}$ is a GLCD (resp.\ GRCD)
    of $F(z)$ and $\wtilde F(z)$ if and only if there exists a unimodular matrix
    polynomial $W(z)\in \mathbb C[z]^{p\times p}$ such that 
    \begin{align*}
   & F_1(z)=F_2(z)W(z)\\
   & (\mbox{resp.\ } F_1(z)=W(z)F_2(z)).
   \end{align*}
\end{proposition}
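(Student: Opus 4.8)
The plan is to establish the GLCD version; the GRCD version then follows by transposition, since $L(z)$ is a GRCD of $F(z)$ and $\wtilde F(z)$ precisely when $L(z)^{\rm T}$ is a GLCD of $F(z)^{\rm T}$ and $\wtilde F(z)^{\rm T}$, the transpose of a unimodular polynomial is unimodular, and transposition swaps left and right divisibility. Two elementary facts would be used repeatedly. First, by the definition of unimodularity in the appendix together with the fundamental theorem of algebra, $W(z)\in\mathbb C[z]^{p\times p}$ is unimodular if and only if $\det W(z)$ is a nonzero constant, in which case $W(z)^{-1}=(\det W)^{-1}\operatorname{adj}W(z)$ is again a matrix polynomial. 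Second, a GLCD occurring here is automatically regular: a GLCD $F_1(z)$ left-divides $F(z)$, say $F(z)=F_1(z)P(z)$, so $\det F=\det F_1\cdot\det P$ is not identically zero once $F(z)$ (or $\wtilde F(z)$) is regular, whence $\det F_1\not\equiv 0$.

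For the forward implication I would assume that $F_2(z)$ is also a GLCD of $F(z)$ and $\wtilde F(z)$ and exploit the mutual maximality inherent in the definition of a greatest common divisor. Since $F_1(z)$ is in particular a left common divisor and $F_2(z)$ is greatest, $F_1(z)$ left-divides $F_2(z)$; since $F_2(z)$ is a left common divisor and $F_1(z)$ is greatest, $F_2(z)$ left-divides $F_1(z)$; say $F_2(z)=F_1(z)W_2(z)$ and $F_1(z)=F_2(z)W_1(z)$. Substituting one into the other gives $F_1=F_1W_2W_1$, hence $\det F_1=\det F_1\cdot\det W_2\cdot\det W_1$; as $\det F_1\not\equiv0$ we may cancel it in this scalar identity to obtain $\det W_1\cdot\det W_2=1$. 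Thus $\det W_1$ is a nonzero constant, $W_1(z)$ is unimodular, and $F_1(z)=F_2(z)W_1(z)$ is the desired factorization with $W=W_1$.

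For the reverse implication I would assume $F_1(z)=F_2(z)W(z)$ with $W(z)$ unimodular and verify the two defining properties of a GLCD for $F_2(z)$. That $F_2(z)$ is a left common divisor is immediate: writing $F=F_1P$ and $\wtilde F=F_1Q$ (possible because the GLCD $F_1$ left-divides both) gives $F=F_2(WP)$ and $\wtilde F=F_2(WQ)$. To see that $F_2(z)$ is greatest, let $L(z)$ be any left common divisor of $F(z)$ and $\wtilde F(z)$; since $F_1(z)$ is a GLCD, $L(z)$ left-divides $F_1(z)$, say $F_1=LR$, and then $F_2=F_1W^{-1}=L(RW^{-1})$ with $RW^{-1}$ a matrix polynomial because $W^{-1}$ is. Hence $L(z)$ left-divides $F_2(z)$, so $F_2(z)$ is indeed a GLCD. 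The only delicate point is the regularity needed to cancel $\det F_1$ in the forward direction, and this is exactly what the preliminary observation on regularity of GLCDs secures; everything else reduces to the bookkeeping of factorizations and the polynomiality of $W^{-1}$.
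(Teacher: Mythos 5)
The paper contains no internal proof to compare against: Proposition~\ref{ReGLCDInertia} is quoted directly from Kailath \cite[pp.~377--378]{Kai}, so your argument can only be judged on its own merits. On those merits it is the standard one and mostly sound. The reverse implication is complete and fully general: $\det W$ never vanishing forces it (by the fundamental theorem of algebra) to be a nonzero constant, $W^{-1}=(\det W)^{-1}\operatorname{adj}W$ is polynomial, and both defining properties of a GLCD pass from $F_1$ to $F_2=F_1W^{-1}$ exactly as you write. The reduction of the GRCD case to the GLCD case by transposition is also fine; the paper performs the same duality in Remark~\ref{ProGRCDGLCD} using $F^{\vee}$ in place of the transpose.

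The genuine gap is in the forward implication, and you flag it yourself without resolving it: the proposition as stated assumes nothing about $F$ and $\wtilde F$, whereas your cancellation of $\det F_1$ in $\det F_1=\det F_1\cdot\det W_2\cdot\det W_1$ requires $\det F_1\not\equiv 0$, which your preliminary observation delivers only under the unstated hypothesis that $F$ or $\wtilde F$ is regular. When $\det F\equiv\det\wtilde F\equiv 0$ the failure is substantive, not cosmetic: mutual divisibility $F_1=F_2W_1$, $F_2=F_1W_2$ no longer forces the \emph{chosen} $W_1$ to be unimodular --- for $F=\wtilde F=0_p$ the only GLCD is $0_p$, and $0_p=0_p\cdot W_1$ holds with $W_1=\operatorname{diag}(z,1,\ldots,1)$, say --- so one must instead \emph{construct} some unimodular $W$. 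The conclusion does survive in general: mutual left divisibility says the columns of $F_1$ and $F_2$ generate the same $\C[z]$-submodule of $\C[z]^p$, and factoring each matrix through a basis of that submodule via Smith forms, $F_i=B\,V_i\,[\,I_r\ 0\,]\,U_i$ with $U_i,V_i$ unimodular, yields the unimodular $W=U_2^{-1}\operatorname{diag}(V_2^{-1}V_1,I_{p-r})\,U_1$ with $F_1=F_2W$. Alternatively, and more in the spirit of the source, one may simply add regularity of $F$ (or $\wtilde F$) as a hypothesis: it holds in every application in this paper (the GRCDs taken are of $F_e(-z^2)$ and $zF_o(-z^2)$ with $F$ monic), and it is in force on the cited pages of Kailath, where the denominator matrix is assumed nonsingular. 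With that hypothesis made explicit, your proof is correct as it stands.
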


Lemma 6.3-3 of~\cite{Kai} provides a sufficient way to obtain a GRCD/GLCD. We give a completion
by clarifying the necessity.

\begin{proposition}\label{ProEquivalentGCD}
Let $F(z)$, $F_1(z)$ and $\wtilde F(z) \in \mathbb C[z]^{p\times p}$.
 \begin{enumerate}
 \item[{\rm (i)}] $F_1(z)$ is a GRCD  of $F(z)$ and $\wtilde F(z)$ if and only if there exists a unimodular matrix
     polynomial~$U_R(z)\in \mathbb C[z]^{2p\times 2p}$ and a matrix polynomial
     $F_1(z)\in \mathbb C[z]^{p\times p}$ such that
     \begin{equation}\label{EquivalentGCDI}
         U_R(z)\cdot
         \begin{bmatrix}
             F(z)\\
             \wtilde F(z)
         \end{bmatrix}
         =\begin{bmatrix}
             F_1(z)\\
             0_{p}
         \end{bmatrix}.
     \end{equation}

 \item[{\rm (ii)}] $F_1(z)$ is a GLCD  of $F(z)$ and $\wtilde F(z)$ if and only if there exists a unimodular matrix
     polynomial~$U_L(z)\in \mathbb C[z]^{2p\times 2p}$ such that
     \begin{equation*}
         \begin{bmatrix}
             F(z),
             \wtilde F(z)
         \end{bmatrix}\cdot
         U_L(z)=\begin{bmatrix}
             F_1(z),
             0_{p}
         \end{bmatrix}.
     \end{equation*}

 \end{enumerate}
\end{proposition}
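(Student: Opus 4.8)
The plan is to prove~(i) in full and then obtain~(ii) by transposition: the map $A\mapsto A^{\rm T}$ interchanges right and left divisibility and sends unimodular matrix polynomials to unimodular ones (as $\det A^{\rm T}=\det A$), so~(ii) for $(F,\wtilde F,F_1)$ is exactly~(i) for $(F^{\rm T},\wtilde F^{\rm T},F_1^{\rm T})$ with $U_L=U_R^{\rm T}$. Since the ``if'' direction of~(i) amounts to Lemma~6.3-3 of~\cite{Kai}, the genuinely new content is the ``only if'' direction, but I would record both. Throughout I would decompose $U_R=[U_{jk}]_{j,k=1}^{2}$ and $U_R^{-1}=[V_{jk}]_{j,k=1}^{2}$ into $p\times p$ blocks, using that $U_R^{-1}$ is again a polynomial (indeed unimodular) matrix because $\det U_R$ is a nonzero constant.

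For the ``if'' direction, assume~\eqref{EquivalentGCDI} and left-multiply by $U_R^{-1}$ to obtain $F=V_{11}F_1$ and $\wtilde F=V_{21}F_1$, exhibiting $F_1$ as a common right divisor of $F$ and $\wtilde F$. The top block row of~\eqref{EquivalentGCDI} reads $F_1=U_{11}F+U_{12}\wtilde F$; hence if $L$ is any common right divisor, say $F=ML$ and $\wtilde F=\wtilde M L$, then $F_1=(U_{11}M+U_{12}\wtilde M)L$, so $L$ is a right divisor of $F_1$. Thus $F_1$ is a GRCD of $F$ and $\wtilde F$.

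For the ``only if'' direction, let $F_1$ be a GRCD. The triangularization in Lemma~6.3-3 of~\cite{Kai} yields \emph{some} unimodular $U_R^{0}\in\mathbb C[z]^{2p\times 2p}$ and \emph{some} matrix polynomial $F_1^{0}$ satisfying~\eqref{EquivalentGCDI} with $U_R^{0},F_1^{0}$ in place of $U_R,F_1$; by the ``if'' direction just shown, $F_1^{0}$ is itself a GRCD. Since $F_1$ and $F_1^{0}$ are both GRCDs, Proposition~\ref{ReGLCDInertia} supplies a unimodular $W\in\mathbb C[z]^{p\times p}$ with $F_1=WF_1^{0}$. Then $U_R\coloneqq\begin{bmatrix}W&0_p\\0_p&I_p\end{bmatrix}U_R^{0}$ is unimodular, being a product of unimodular matrices, and $U_R\begin{bmatrix}F\\\wtilde F\end{bmatrix}=\begin{bmatrix}WF_1^{0}\\0_p\end{bmatrix}=\begin{bmatrix}F_1\\0_p\end{bmatrix}$, which is precisely~\eqref{EquivalentGCDI}.

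Once the triangularization is invoked, the divisibility bookkeeping is routine, so I do not expect a serious obstacle; the points demanding care are all about sidedness in this noncommutative setting. Specifically, I would verify that the top block row genuinely expresses $F_1$ as a \emph{left} combination of $F$ and $\wtilde F$, so that a right common divisor factors out on the right, and that the passage from~(i) to~(ii) correctly converts a GRCD of the transposes into a GLCD of the originals. A minor technical point to confirm is that the stacked matrix $\begin{bmatrix}F\\\wtilde F\end{bmatrix}$ has full column rank $p$ over $\mathbb C(z)$ (which holds as soon as one of $F,\wtilde F$ is regular), so that the $F_1^{0}$ produced by the triangularization is a genuine $p\times p$ candidate divisor.
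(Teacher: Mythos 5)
Your proposal is correct and follows essentially the same route as the paper: the ``only if'' direction is verbatim the paper's argument (Hermite-form triangularization from \cite{Kai} producing $U_R^{0}$ and $F_1^{0}$, the ``if'' direction to certify $F_1^{0}$ as a GRCD, Proposition~\ref{ReGLCDInertia} to get a unimodular $W$ with $F_1=WF_1^{0}$, and $U_R=\operatorname{diag}(W,I_p)\,U_R^{0}$). The only differences are cosmetic fill-ins --- you spell out the ``if'' direction, which the paper delegates to Lemma~6.3-3 of \cite{Kai}, and prove~(ii) by transposition where the paper says ``analogously'' --- and both are sound; note also that your worry about full column rank is unnecessary, since the Hermite form of the stacked $2p\times p$ matrix yields a $p\times p$ top block regardless of rank.
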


\begin{proof}
      To obtain~``only if'' implication of (i), transform $\begin{bmatrix} F(z)\\\wtilde F(z)
    \end{bmatrix}$ into the Hermitian form, see~\cite[Theorem 6.3-2]{Kai}:
    in particular, there exist a unimodular matrix polynomial
    $\wtilde U_R(z)\in \mathbb C[z]^{2p\times 2p}$ and
    some~$F_2(z)\in \mathbb C[z]^{p\times p}$ such that
    \begin{equation}\label{EquivalentGCDII}
        \wtilde U_R(z)\cdot
        \begin{bmatrix}
            F(z)\\
            \wtilde F(z)
        \end{bmatrix}
        =\begin{bmatrix}
            F_2(z)\\
            0_{p}
        \end{bmatrix}.
    \end{equation}
    Then due to the ``if'' implication, $F_2(z)$ is a
   GRCD of~$F(z)$ and~$\wtilde F(z)$. Suppose that~$F_1(z)$ is a GRCD of~$F(z)$
    and $\wtilde F(z)$. As is seen from Proposition \ref{ReGLCDInertia}, there
    exists a unimodular~$W(z)\in \mathbb C[z]^{p\times p}$ such that
    \begin{equation*}
        W(z)F_2(z)=F_1(z).
    \end{equation*}
    Substituting
    \begin{equation*}
        U_R(z)\coloneqq\mbox{diag}(W(z),I_p) \wtilde U_R(z)
    \end{equation*}
    into~\eqref{EquivalentGCDII} then gives~\eqref{EquivalentGCDI}.

    The validity of~(ii)  follows analogously to the proof of~(i)
.
\end{proof}

Using Proposition \ref{ProEquivalentGCD}, we have

\begin{remark}\label{ProGRCDGLCD}
Suppose that~$F(z)$, $F_1(z)$ and $\wtilde F(z)\in \mathbb C[z]^{p\times p}$. Then $F_1(z)$ is a GRCD of $F(z)$ and
    $\wtilde F(z)$ if and only if $F_1^{\vee}(z)$ is a GLCD of $F^{\vee}(z)$ and~$\wtilde F^{\vee}(z)$.
\end{remark}

The next proposition shows that there is a relation between GRCDs or
GLCDs of matrix polynomials and that of their transformations.

\begin{proposition}\label{ProGCDUnimodular}
Let~$F(z), \wtilde F(z)$ and $F_1(z)\in \mathbb C[z]^{p\times p}$ and let~$U(z)\in \mathbb C[z]^{2p\times 2p}$ be unimodular.
 Define
    $E(z)$, $\wtilde E(z)\in \mathbb C[z]^{p\times p}$
    by:
\begin{align*}
\begin{bmatrix}
E(z)\\
\wtilde E(z)
\end{bmatrix}&\coloneqq
U(z)\cdot \begin{bmatrix}
F(z)\\
\wtilde F(z)
\end{bmatrix}\\
(\text{resp.\ }\  \begin{bmatrix}
E(z),
\wtilde E(z)
\end{bmatrix}& \coloneqq
\begin{bmatrix}
F(z),
\wtilde F(z)
\end{bmatrix}\cdot U(z)).
\end{align*}
Then $F_1(z)$ is a GRCD (resp.\ GLCD) of $F(z)$ and $\wtilde F(z)$ if and only if $F_1(z)$ is a
GRCD (resp.\ GLCD) of~$E(z)$ and~$\wtilde E(z)$.
\end{proposition}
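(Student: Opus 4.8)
The plan is to reduce the whole statement to the characterization of greatest common divisors furnished by Proposition~\ref{ProEquivalentGCD}, exploiting that a unimodular transformation of the pair $(F,\wtilde F)$ is reversible inside the polynomial ring. I would treat the GRCD case (left multiplication by $U(z)$) in full; the GLCD case is entirely parallel, with Proposition~\ref{ProEquivalentGCD}(ii) and right multiplication replacing their left-sided counterparts, so only cosmetic changes are needed.

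The first step I would carry out is the structural observation that the inverse of a unimodular matrix polynomial is again a unimodular matrix polynomial. By the definition adopted in the appendix, $\det U(z)$ never vanishes on $\C$; being a polynomial without complex zeros, it is a nonzero constant $c\in\C\setminus\{0\}$. The adjugate formula $U(z)^{-1}=c^{-1}\operatorname{adj}U(z)$ then exhibits $U(z)^{-1}$ as an element of $\C[z]^{2p\times 2p}$, with $\det U(z)^{-1}=c^{-1}$ confirming that it too is unimodular. Consequently the defining relation $\begin{bmatrix}E(z)\\\wtilde E(z)\end{bmatrix}=U(z)\begin{bmatrix}F(z)\\\wtilde F(z)\end{bmatrix}$ can be inverted within $\C[z]^{2p\times p}$ to $\begin{bmatrix}F(z)\\\wtilde F(z)\end{bmatrix}=U(z)^{-1}\begin{bmatrix}E(z)\\\wtilde E(z)\end{bmatrix}$.

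Next I would chain the two unimodular transformations. Assuming $F_1(z)$ is a GRCD of $F(z)$ and $\wtilde F(z)$, Proposition~\ref{ProEquivalentGCD}(i) supplies a unimodular $U_R(z)\in\C[z]^{2p\times 2p}$ with $U_R(z)\begin{bmatrix}F(z)\\\wtilde F(z)\end{bmatrix}=\begin{bmatrix}F_1(z)\\0_p\end{bmatrix}$. Substituting the inverted relation gives $U_R(z)U(z)^{-1}\begin{bmatrix}E(z)\\\wtilde E(z)\end{bmatrix}=\begin{bmatrix}F_1(z)\\0_p\end{bmatrix}$, and since $U_R(z)U(z)^{-1}$ is a product of unimodular matrix polynomials it is again unimodular; hence the ``if'' part of Proposition~\ref{ProEquivalentGCD}(i) yields that $F_1(z)$ is a GRCD of $E(z)$ and $\wtilde E(z)$. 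The reverse implication follows by the symmetric argument: from a unimodular $V(z)$ with $V(z)\begin{bmatrix}E(z)\\\wtilde E(z)\end{bmatrix}=\begin{bmatrix}F_1(z)\\0_p\end{bmatrix}$, substituting $\begin{bmatrix}E(z)\\\wtilde E(z)\end{bmatrix}=U(z)\begin{bmatrix}F(z)\\\wtilde F(z)\end{bmatrix}$ lets the unimodular product $V(z)U(z)$ witness, via Proposition~\ref{ProEquivalentGCD}(i), that $F_1(z)$ is a GRCD of $F(z)$ and $\wtilde F(z)$.

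I do not anticipate a genuine obstacle: once Proposition~\ref{ProEquivalentGCD} is available, the equivalence is a formal manipulation of unimodular factors. The one point that genuinely demands care—and the only place where the definition of unimodularity is really used—is the polynomiality of $U(z)^{-1}$ established in the second step; without it, the composite $U_R(z)U(z)^{-1}$ would not lie in $\C[z]^{2p\times 2p}$ and could not be fed back into Proposition~\ref{ProEquivalentGCD}(i).
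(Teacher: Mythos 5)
Your proof is correct and follows essentially the same route as the paper: both directions reduce to Proposition~\ref{ProEquivalentGCD} by composing the witnessing unimodular matrix with $U(z)^{-1}$ (resp.\ $U(z)$), exactly as in the paper's argument. Your explicit verification via the adjugate that $U(z)^{-1}$ is again a unimodular matrix polynomial is a detail the paper leaves implicit, and your handling of the GLCD case through Proposition~\ref{ProEquivalentGCD}(ii) rather than through Remark~\ref{ProGRCDGLCD} is a cosmetic variation.
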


\begin{proof} We only give a proof in the case for GRCD, which can be converted to the case for~GLCD due to Remark \ref{ProGRCDGLCD}.

    Suppose that~$F_1(z)$ is a GRCD of~$F(z)$ and~$\wtilde F(z)$. By
    Proposition~\ref{ProEquivalentGCD}, there exists a unimodular matrix polynomial
    $U_R(z)\in \mathbb C[z]^{2p\times 2p}$ such that~\eqref{EquivalentGCDI} holds.
    Let, for $z\in \mathbb C$,
\begin{equation*}
\wtilde U(z)\coloneqq U_R(z) (U(z))^{-1},
\end{equation*}
which is unimodular. It follows
from~\eqref{EquivalentGCDI} that
\begin{equation}\label{U_RE}
\wtilde U(z) \cdot \begin{bmatrix} E(z)\\ \wtilde E(z) \end{bmatrix}=\begin{bmatrix} F_1(z)\\ 0_{p}
\end{bmatrix}.
\end{equation}
So, Proposition~\ref{ProEquivalentGCD} yields that~$F_1$ is a GRCD of~$E$ and~$\wtilde E$.

Conversely, suppose that $F_1$ is a GRCD of~$E$ and~$\wtilde E$. By
Proposition~\ref{ProEquivalentGCD}, there exists a
unimodular~$\wtilde U(z) \in \mathbb C[z]^{2p\times 2p}$ such
that~\eqref{U_RE} holds. Then the matrix
polynomial~$U_R(z)\coloneqq \wtilde U(z) U(z)$ is unimodular and satisfies
\[
    U_R(z) \cdot \begin{bmatrix} F(z)\\ {\wtilde F}(z) \end{bmatrix} =
    \wtilde U(z) \cdot \begin{bmatrix} E(z)\\ {\wtilde E}(z) \end{bmatrix} =
    \begin{bmatrix} F_1(z)\\ 0_{p}
    \end{bmatrix},
\]
which implies that~$F_1(z)$ is a GRCD of~$F(z)$ and~$\wtilde F(z)$.
\end{proof}

\bibliographystyle{amsplain}

\end{document}